\definecolor{brass}{rgb}{0.71, 0.65, 0.36}
\theoremstyle{plain}
\newtheorem{theorem}{Theorem}[section]
\newtheorem{lemma}[theorem]{Lemma}
\newtheorem{prop}[theorem]{Proposition}
\newtheorem{cor}[theorem]{Corollary}
\theoremstyle{definition}
\newtheorem{remark}[theorem]{Remark}
\numberwithin{equation}{section}
\def\Ric{\operatorname{Ric}}
\def\inf{\operatorname{inf}}
\theoremstyle{plain}
\numberwithin{equation}{section}
\begin{document}

\title[Spacetime Harmonic Functions and Mass]{Spacetime Harmonic Functions and the Mass of 3-Dimensional Asymptotically Flat Initial Data for the Einstein Equations}

\author[Hirsch]{Sven Hirsch}
\address{
Department of Mathematics\\
Duke University\\
Durham, NC, 27708\\
USA}
\email{sven.hirsch@duke.edu, demetre.kazaras@duke.edu}

\author[Kazaras]{Demetre Kazaras}
\author[Khuri]{Marcus Khuri}
\address{
Department of Mathematics\\
Stony Brook University \\
Stony Brook, NY, 11794-3660\\
USA}
\email{demetre.kazaras@stonybrook.edu, khuri@math.sunysb.edu}

\thanks{D. Kazaras acknowledges the support of NSF Grant DMS-1547145. M. Khuri acknowledges the support of NSF Grant DMS-1708798, and Simons Foundation Fellowship 681443.}

\begin{abstract}
We give a lower bound for the Lorentz length of the ADM energy-momentum vector (ADM mass)
of 3-dimensional asymptotically flat initial data sets for the Einstein equations. The bound is given in terms of linear growth `spacetime harmonic functions' in addition to the energy-momentum density of matter fields, and is valid regardless of whether the dominant energy condition holds or whether the data possess a boundary. A corollary of this result is a new proof of the spacetime positive mass theorem for complete initial data or those with weakly trapped surface boundary, and includes the rigidity statement which asserts that the mass vanishes if and only if the data arise from Minkowski space. The proof has some analogy with both the Witten spinorial approach as well as the marginally outer trapped surface (MOTS) method of Eichmair, Huang, Lee, and Schoen. Furthermore, this paper generalizes the harmonic level set technique used in the Riemannian case by Bray, Stern, and the second and third authors, albeit with a different class of level sets. Thus, even in the time-symmetric (Riemannian) case a new inequality is achieved.
\end{abstract}

\maketitle

\section{Introduction}
\label{sec1} \setcounter{equation}{0}
\setcounter{section}{1}

Let $(M,g,k)$ be a smooth connected 3-dimensional initial data set for the Einstein equations. This represents an embedded spacelike hypersurface in spacetime, so that $g$ is a Riemannian metric and $k$ is a symmetric 2-tensor denoting the extrinsic curvature.
These objects satisfy the constraint equations
\begin{equation}
\mu=\frac{1}{2}\left(R_g +(\mathrm{Tr}_g k)^2 -|k|_g^2\right),\quad\quad
J=\mathrm{div}_g\left(k-(\mathrm{Tr}_g k)g\right),
\end{equation}
where $R_g$ is the scalar curvature and $\mu$ and $J$ represent the energy and momentum density of the matter fields. It will be assumed that the data are asymptotically flat.
This means that there is a compact set $\mathcal{C}\subset M$ such that $M\setminus \mathcal{C}=\cup_{\ell=1}^{\ell_0}M_{end}^{\ell}$ where the ends $M_{end}^\ell$ are pairwise disjoint and diffeomorphic to the complement of a ball $\mathbb{R}^3 \setminus B_1$, and there exists in each end a coordinate system satisfying
\begin{equation}\label{asymflat}
|\partial^l (g_{ij}-\delta_{ij})(x)|=O(|x|^{-q-l}),\quad l=0,1,2,\quad\quad
|\partial^l k_{ij}(x)|=O(|x|^{-q-1-l}),\quad l=0,1,
\end{equation}
for some $q>\tfrac{1}{2}$. The energy and momentum densities will be taken to be integrable $\mu, J \in L^1(M)$ so that the ADM energy and linear momentum of each end is well-defined \cite{Bartnik,Chrusciel,OMurchadha} and given by
\begin{equation}
E=\lim_{r\rightarrow\infty}\frac{1}{16\pi}\int_{S_{r}}\sum_i \left(g_{ij,i}-g_{ii,j}\right)\upsilon^j dA,\quad\quad
P_i=\lim_{r\rightarrow\infty}\frac{1}{8\pi}\int_{S_{r}} \left(k_{ij}-(\mathrm{Tr}_g k)g_{ij}\right)\upsilon^j dA,
\end{equation}
where $\upsilon$ is the unit outer normal to the coordinate sphere $S_r$ of radius $r=|x|$ and $dA$ denotes its area element. The ADM mass $m=\sqrt{E^2-|P|^2}$ is the Lorentz length of the ADM energy-momentum vector $(E,P)$. If the dominant energy condition is satisfied $\mu\geq |J|_g$, then the spacetime positive mass theorem asserts that the ADM energy-momentum is nonspacelike, and characterizes Minkowski space as the unique spacetime having asymptotically flat initial data with vanishing mass.

\begin{theorem}\label{positivemass}
Let $(M,g,k)$ be a complete and asymptotically flat initial data set for the Einstein equations satisfying the dominant energy condition. Then in each end $E\geq |P|$, and $E=|P|$ in some end if and only if $E=|P|=0$ and the data arise from an isometric embedding into Minkowski space.
\end{theorem}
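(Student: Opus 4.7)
The strategy is modeled on Witten's spinorial proof, with the role of the Dirac spinor played by a scalar \emph{spacetime harmonic function}. Given any unit spatial vector $a\in\R^3$ in a distinguished end, I would construct $u\in C^\infty(M)$ solving the semilinear, degenerate elliptic equation
\begin{equation*}
\Delta_g u + (\mathrm{Tr}_g k)|\nabla u| - \frac{k(\nabla u,\nabla u)}{|\nabla u|} = 0,
\end{equation*}
with asymptotics $u(x)=a\cdot x+o(|x|)$ in the distinguished end and bounded behavior in the remaining ends. On a regular level set $\Sigma=\{u=t\}$ with unit normal $\nu=\nabla u/|\nabla u|$ the equation reads $\Delta_g u/|\nabla u|+\mathrm{Tr}_\Sigma k=0$, forcing level sets of $u$ to behave as approximate marginally outer trapped surfaces; this links the construction both to the MOTS method of Eichmair--Huang--Lee--Schoen and to the harmonic-level-set philosophy of Bray, Stern, and the present authors in the Riemannian setting.

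The first step is existence. I would solve Dirichlet problems with boundary data $u_R=a\cdot x$ on large coordinate balls $B_R$, regularizing $|\nabla u|$ to $\sqrt{|\nabla u|^2+\epsilon^2}$ to render the PDE nondegenerate. Uniform $C^{1,\alpha}$ and $W^{2,p}_{\mathrm{loc}}$ estimates, obtained from barriers built on the asymptotic linear expansion combined with (\ref{asymflat}), then permit passage to the limits $R\to\infty$ and $\epsilon\to 0$; the critical set $\{\nabla u=0\}$ has zero capacity and does not obstruct subsequent integrations by parts. The second step is the integration identity. Apply the Bochner formula to $|\nabla u|^2/2$, substitute the spacetime harmonic equation, and use the constraint equations to eliminate $R_g$ and $\mathrm{div}_g k$ in favor of $\mu$ and $J$. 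After regrouping the quadratic terms via a Kato-type improvement and invoking a Stern-type Gauss--Bonnet identity on regular level sets, one arrives at a formula of the schematic form
\begin{equation*}
\int_{S_r}\Bigl(\langle\nabla|\nabla u|,\nu\rangle_g+(\mathrm{Tr}_g k)|\nabla u|-k(\nabla u,\nu)\Bigr)dA
\;\geq\;\int_{B_r}\bigl(\mu|\nabla u|-J(\nabla u)\bigr)dV_g,
\end{equation*}
whose bulk integrand is nonnegative under DEC because $\mu|\nabla u|\geq|J||\nabla u|\geq J(\nabla u)$.

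Expanding $u\sim a\cdot x$ in the distinguished end and substituting (\ref{asymflat}), the boundary integral reduces up to $o(1)$ to $16\pi(E-a\cdot P)$, precisely the combination appearing in Witten's formula; in the other ends, boundedness of $u$ forces the flux to vanish as $r\to\infty$. Passing to the limit yields $E\geq a\cdot P$ for every unit $a\in\R^3$, and optimizing $a=P/|P|$ (when $P\neq 0$) delivers $E\geq|P|$ in the distinguished end. Rigidity follows from the equality case: $E=|P|$ forces $\nabla^2 u\equiv 0$, $\mu\equiv|J|$, and vanishing of the Gauss-curvature defect for each admissible $a$; applying this to three linearly independent directions produces three $g$-parallel $1$-forms on $M$ which flatten $g$, and the compatible shear equations for $k$ then enable reconstruction of a parallel spacetime vector whose flow yields the isometric embedding of $(M,g,k)$ into Minkowski space.

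The principal obstacle is the existence step. The PDE is quasilinear and genuinely degenerate on the critical set of $u$; prescribing controlled asymptotic behavior in every end simultaneously is delicate, and the analog of the Jang equation's blow-up phenomenon may appear, requiring careful barrier constructions together with a priori estimates uniform in both $R$ and $\epsilon$.
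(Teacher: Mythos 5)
Your proposal captures the broad shape of the argument—Bochner identity, Gauss--Bonnet on level sets, boundary flux giving $E - a\cdot P$ (up to sign)—but there are several concrete gaps compared with what is actually needed.

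First, the PDE you write is not the spacetime harmonic function equation used here. You have
$\Delta u + (\mathrm{Tr}_g k)|\nabla u| - k(\nabla u,\nabla u)/|\nabla u| = 0$, whereas the correct equation is
$\Delta u + (\mathrm{Tr}_g k)|\nabla u| = 0$, with no $k(\nu,\nu)$ term. This matters: the correct equation is exactly $\mathrm{Tr}_g\bar{\nabla}^2 u = 0$ for the spacetime Hessian $\bar{\nabla}^2 u = \nabla^2 u + k|\nabla u|$, which is the trace on the slice of the ambient Hessian when the spacetime gradient is taken null ($\mathbf n(\widetilde u) = -|\nabla u|$). The whole cancellation in the Bochner/constraint computation—replacing $\nabla^2 u$ by $\bar\nabla^2 u$, using $2\mu=R_g+\mathcal K^2-|k|^2$ and $J=\operatorname{div}_g(k-\mathcal K g)$—is calibrated to that specific operator. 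Your extra $k(\nu,\nu)|\nabla u|$ term destroys the trace identity and would leave unabsorbed terms that do not obviously have a sign.

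Second, your outline has no mechanism for controlling the topology of level sets, which is the other half of the argument. The Gauss--Bonnet contribution $2\pi\int \chi(\Sigma_t)\,dt$ is only bounded by $4\pi L$ if one knows $\chi(\Sigma_t)\leq 1$ for regular $t$. Achieving this requires (i) first passing to a \emph{generalized exterior region} $M_{ext}$—constructed in Proposition \ref{exterior} by removing MOTS and MITS and then iterating through double covers to kill $H_2(M_{ext},\partial M_{ext};\mathbb Z)$—and (ii) carefully choosing constant Dirichlet data on the boundary so that the spacetime harmonic function has a critical point on each boundary component and a signed normal derivative (Lemma \ref{l:dirichlet}, Theorem \ref{shfexistence}). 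Only then is every regular level set connected, noncompact, and planar at infinity. Your proposal, which asks for boundedness of $u$ in the other asymptotic ends, does not preclude compact level sets or level sets with handles, and so the Gauss--Bonnet term is uncontrolled. Likewise, you have no inner boundary term at all; in the actual proof the inner boundary integral over $\partial M_{ext}$ vanishes precisely because each component is a MOTS or MITS and the Dirichlet data has been chosen so that $\theta_\pm|\upsilon(u)|$ has the correct sign (and vanishes). The rigidity sketch is also substantially underspecified—the crucial step is showing $\bar\nabla^2 u\equiv 0$ forces $|\nabla u|$ to be bounded below, which rules out any boundary for $M_{ext}$, and repeated application of this through covers forces $M\cong\mathbb R^3$; then a Killing development of $(M,g,k)$ is built explicitly from three spacetime harmonic functions and shown to be flat and globally isometric to Minkowski space. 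Your "$g$-parallel 1-forms flatten $g$" line is true in spirit but skips exactly the part that is hard.
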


Slightly less general incarnations of this theorem were first established in the early 1980's by Schoen and Yau \cite{SchoenYauI,SchoenYauII,SchoenYauIII}, and independently by Witten \cite{ParkerTaubes,Witten}. The later approach of Witten utilized the hypersurface Dirac operator and generalized Lichnerowicz formula to establish the positive mass inequality $E\geq |P|$. In \cite{Witten} an outline was given for the rigidity statement in vacuum, and under various stronger hypotheses this was established by Ashtekar and Horowitz \cite{AshtekarHorowitz}, and Yip \cite{Yip}. A complete and rigorous proof was then given by Beig and Chru\'{s}ciel \cite{BeigChrusciel}
with the asymptotic assumption that
$\mu, |J|_g =O(|x|^{-q-5/2})$,
instead of integrability.
In \cite{SchoenYauI} Schoen and Yau proved the time-symmetric case, when $k=0$, via a contradiction argument employing stable minimal hypersurfaces. The non-time-symmetric case was then reduced to the previous result by solving Jang's equation \cite{SchoenYauII}. This established nonnegativity of the energy $E\geq 0$ along with the case of equality, under the additional asymptotic hypothesis \cite{Eichmair} that
$\mathrm{Tr}_g k=O(|x|^{-q-3/2})$.
Nonnegativity of the energy in fact implies the inequality $E\geq |P|$, through the boost result of Christodoulou and \'{O} Murchadha \cite{ChristodoulouOMurchadha} and subsequent generalization of the boost argument in \cite{EichmairHuangLeeSchoen}. Moreover, assuming
the decay for the matter energy-momentum density present in \cite{BeigChrusciel},
Eichmair, Huang, Lee, and Schoen \cite{EichmairHuangLeeSchoen} have shown that stable marginally outer trapped surfaces (MOTS) may be used to prove $E\geq |P|$, in analogy with the time-symmetric minimal hypersurface proof. Although the rigidity statement was not treated in \cite{EichmairHuangLeeSchoen}, the work of Huang and Lee \cite{HuangLee} demonstrates that it follows from the inequality $E\geq |P|$.

In higher dimensions the approach of Witten generalizes for spin manifolds \cite{Bartnik,Ding}, with the case of equality being settled by Chru\'{s}ciel and Maerten in \cite{ChruscielMaerten}. Similar to the minimal hypersurface technique, the MOTS method \cite{EichmairHuangLeeSchoen} extends without difficulty to dimensions $3\leq d\leq 7$, and Eichmair \cite{Eichmair} has generalized the Jang deformation to these dimensions as well. Combined then with the rigidity argument of Huang and Lee \cite{HuangLee}, which holds in all dimensions, the result holds up to dimension 7 without the spin assumption.
A compactification argument has been given by Lohkamp \cite{Lohkamp2}, akin to the Riemannian case \cite{Lohkamp}, in which the spacetime positive mass inequality reduces to the nonexistence of initial data of the form $(N^d \# T^d,g,k)$ satisfying a strict dominant energy condition, where $N^d$ is a compact manifold and $T^d$ is the torus; this relies also on the boost theorem.
Furthermore, we point out the articles of Schoen and Yau \cite{SchoenYauIV} and Lohkamp \cite{Lohkamp1} which address the higher dimensional Riemannian problem.
For a survey of topics related to the positive mass theorem see the book by Lee \cite{Lee}.

The purpose of the current article is to give a lower bound for the difference $E-|P|$
in terms of linear growth `spacetime harmonic functions' and the difference of energy-momentum densities for the matter fields $\mu-|J|_g$. In order to state the main result, let $\Sigma$ be a closed 2-sided hypersurface in $M$ with null expansions $\theta_{\pm}=H\pm \mathrm{Tr}_{\Sigma}k$, where $H$ is the mean curvature of $\Sigma$ with respect to the unit normal $\upsilon$ pointing towards infinity in a designated end $M_{end}$. The null expansions are the mean curvatures in the null directions $\upsilon\pm n$ when viewed as codimension two surfaces in spacetime, where $n$ is the future pointing timelike normal to the slice $(M,g,k)$. Physically these quantities may be interpreted as determining the rate at which the area of a shell of light is changing
as it moves away from the surface in the outward future/past direction, and thus can be used to measure the strength of the gravitational field. The gravitational field is strong if $\Sigma$ is \textit{outer or inner trapped}, that is $\theta_{+}<0$ or $\theta_{-}<0$. Moreover, $\Sigma$ is called a \textit{marginally outer or inner trapped surface} (MOTS or MITS) if $\theta_+ =0$ or $\theta_- =0$; in the literature these surfaces are also sometimes referred to as future or past apparent horizons.

It will be important to restrict the type of regular level sets that a spacetime harmonic function can have. In order to aid with this task, we will often pass from the given initial data to a \textit{generalized exterior region} associated with a particular end. More precisely, it is shown in Proposition \ref{exterior} below that for each end $M_{end}$, there exists a new initial data set $(M_{ext},g_{ext},k_{ext})$ having a single end which is isometric (as initial data) to the original end. In addition $M_{ext}$ is orientable, satisfies $H_2(M_{ext},\partial M_{ext};\mathbb{Z})=0$, and has a (possibly empty) boundary $\partial M_{ext}$ consisting entirely of MOTS and MITS.
Any asymptotically flat initial data set with one end satisfying these later three properties will be referred to as a generalized exterior region. Although $M_{ext}$ is not necessarily a subset of $M$, we will often denote the metric and extrinsic curvature of $M_{ext}$ by $(g,k)$ for convenience.
In the Riemannian setting where MOTS and MITS correspond to minimal surfaces, the (generalized) exterior region is a subset of the initial data which is diffeomorphic to the complement of a finite number of balls (with disjoint closure) in $\mathbb{R}^3$ \cite[Lemma 4.1]{HuiskenIlmanen}.


A function $u:M\rightarrow\mathbb{R}$ will be referred to as a \textit{spacetime harmonic function} if it satisfies the equation
\begin{equation}
\Delta u+\left(\mathrm{Tr}_g k\right)|\nabla u|=0,
\end{equation}
where $\Delta=g^{ij}\nabla_{ij}$ denotes the Laplace-Beltrami operator associated with the metric $g$. As discussed in Section \ref{sec2.5}, this equation may be interpreted as the trace along $M$ of the spacetime Hessian
\begin{equation}
\bar{\nabla}_{ij}u=\nabla_{ij}u+k_{ij}|\nabla u|,
\end{equation}
and therefore has similarities with the hypersurface Dirac operator \cite{ParkerTaubes,Witten} induced on the initial data from spacetime.  Furthermore, in analogy with spacetime harmonic spinors, such functions satisfy a Lichnerowicz-type integral identity whose boundary terms can be employed to extract the ADM quantities. A spacetime harmonic function $u$ defined on a generalized exterior region $M_{ext}$ will be called \textit{admissible} if it has constant Dirichlet boundary data, and satisfies $\partial_{\upsilon}u\leq (\geq) 0$ on each boundary component with $\theta_+ =0$ ($\theta_- =0$), where $\upsilon$ is the unit boundary normal pointing outside $M_{ext}$. Such spacetime harmonic functions asymptotic to any given linear function in $M_{end}$ always exist, see Theorem \ref{Thm:PDE} and Lemma \ref{l:dirichlet}. The main result is the following lower bound for the difference of ADM energy and linear momentum.

\begin{theorem}\label{main}
Let $(M_{ext},g,k)$ be a 3-dimensional generalized exterior region which is complete as a manifold with (possibly empty) boundary,
and has ADM energy $E$ and linear momentum $P$. Let $u$ be an admissible spacetime harmonic function on $M_{ext}$,  which is asymptotic to a linear combination $\langle \vec{a},x\rangle =a_i x^i$ of asymptotically flat coordinates of the associated end, with $|\vec{a}|=1$. Then
\begin{equation}\label{mainlowerb}
E+\langle \vec{a}, P\rangle\geq \frac{1}{16\pi} \int_{M_{ext}}\left(\frac{|\bar{\nabla}^2 u|^2}{|\nabla u|}
+2(\mu-|J|_g)|\nabla u|\right) dV.
\end{equation}
In particular, if the dominant energy condition holds then $E\geq |P|$. Furthermore, if $E=|P|$ then $E=|P|=0$, $M_{ext}$ is diffeomorphic to $\mathbb{R}^3$, and the data arise from an isometric embedding into Minkowski space.
\end{theorem}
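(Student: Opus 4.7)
The plan is to prove \eqref{mainlowerb} by deriving a Bochner/Stern-style integral identity for $u$ in which the bulk integrand reproduces the right-hand side of \eqref{mainlowerb} up to a Cauchy--Schwarz error, the boundary term at spatial infinity equals $16\pi(E+\langle\vec{a},P\rangle)$, and the inner boundary contribution has a favorable sign by admissibility. The spacetime Hessian $\bar{\nabla}_{ij}u=\nabla_{ij}u+k_{ij}|\nabla u|$ plays the role that Witten's hypersurface connection plays in the spinor proof: its squared norm will emerge naturally after completing squares in the Bochner formula combined with the Hamiltonian constraint.

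Concretely, I would start from the standard Bochner identity
\[
\tfrac{1}{2}\Delta|\nabla u|^2 = |\nabla^2 u|^2 + \langle\nabla u,\nabla\Delta u\rangle + \Ric(\nabla u,\nabla u),
\]
substitute $\Delta u = -(\mathrm{Tr}_g k)|\nabla u|$, and rewrite $\Ric(\nabla u,\nabla u)$ in terms of $R_g$ (and hence $\mu$) via the Hamiltonian constraint together with the Gauss equation on a regular level set $\Sigma_t=u^{-1}(t)$. The key algebraic step is to regroup cross-terms so that
\[
|\nabla^2 u|^2+2\langle\nabla^2 u,k\rangle|\nabla u|+|k|^2|\nabla u|^2 = |\bar{\nabla}^2 u|^2,
\]
while the momentum constraint $J=\operatorname{div}_g(k-(\mathrm{Tr}_g k)g)$ contributes a $2\langle J,\nabla u\rangle$ term after integrating the $k$-derivatives by parts. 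Integrating the resulting identity over $\Sigma_t$, invoking Gauss--Bonnet $\int_{\Sigma_t}K_{\Sigma_t}\,dA=2\pi\chi(\Sigma_t)$, and then integrating in $t$ via coarea produces a bulk integral $\int_{M_{ext}}\bigl(|\bar{\nabla}^2 u|^2/|\nabla u|+2\mu|\nabla u|+2\langle J,\nabla u\rangle\bigr)dV$. The hypothesis $H_2(M_{ext},\partial M_{ext};\mathbb{Z})=0$, combined with the linear asymptotic behavior of $u$, constrains each regular level set to separate $\partial M_{ext}$ from infinity with summed Euler characteristic at most $2$, which is exactly what the Gauss--Bonnet step requires.

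For the boundary terms, at infinity I would use \eqref{asymflat} together with $u=\langle\vec{a},x\rangle+O(|x|^{1-q})$: the $\Gamma^k_{ij}a_k$ contribution to $\nabla^2 u$ reproduces the ADM density $g_{ij,i}-g_{ii,j}$ and hence $16\pi E$, while the $k_{ij}a^i a^j$ contribution coming from the $k|\nabla u|$ part of $\bar{\nabla}^2 u$ reproduces $(k_{ij}-(\mathrm{Tr}_g k)g_{ij})a^i\upsilon^j$ and hence $16\pi\langle\vec{a},P\rangle$. On each component of $\partial M_{ext}$, the admissibility $\partial_\upsilon u\le 0$ at a MOTS or $\partial_\upsilon u\ge 0$ at a MITS, combined with $\theta_\pm=H\pm\mathrm{Tr}_\Sigma k=0$ there, makes the boundary integrand nonnegative. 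The Cauchy--Schwarz inequality $\langle J,\nabla u\rangle\ge -|J|_g|\nabla u|$ then produces the $\mu-|J|_g$ integrand in \eqref{mainlowerb}, and the consequence $E\ge |P|$ under the dominant energy condition follows by choosing $\vec{a}=-P/|P|$.

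The main obstacle will be rigidity. If $E=|P|$, picking $\vec{a}=-P/|P|$ (or any unit vector when $P=0$) gives equality in \eqref{mainlowerb}, forcing $\bar{\nabla}^2 u\equiv 0$, $\mu\equiv|J|_g$, and $J$ antiparallel to $\nabla u$. The equation $\nabla^2 u=-|\nabla u|\,k$ then implies $|\nabla u|$ is constant and $u$ has no critical points, so $M_{ext}$ is foliated by umbilic level sets of constant intrinsic geometry. Combining this with a boost argument in the style of Huang--Lee \cite{HuangLee} to reduce to the case $P=0$, and then constructing three linearly independent admissible spacetime harmonic functions to produce global coordinates, realizes $(M_{ext},g,k)$ as a spacelike hypersurface of Minkowski space, so $E=|P|=0$ and $M_{ext}\cong\mathbb{R}^3$. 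The remaining technical hurdles are the careful algebraic bookkeeping needed to extract the clean square $|\bar{\nabla}^2 u|^2$ from the Bochner terms, and controlling the level-set integration across critical points of $u$ (handled via Sard's theorem on generic levels, together with a cutoff argument near the critical set) under only the weak decay rate $q>\tfrac{1}{2}$.
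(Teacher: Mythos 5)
Your outline of the inequality is essentially the paper's argument in Section~3 (Proposition~\ref{SpacetimeSternCompact}), Section~5 (topology of level sets), and Section~6 (boundary computations): Bochner identity, substitution of $\Delta u=-(\mathrm{Tr}_g k)|\nabla u|$, the Gauss equation on level sets, completion of the square to produce $|\bar{\nabla}^2 u|^2$, integration by parts producing the $J$ term from $\operatorname{div}_g k$, coarea plus Gauss--Bonnet, and Cauchy--Schwarz on $J(\nu)$. The identifications of the outer boundary term with $E+\langle\vec a,P\rangle$ and of the admissibility sign condition with the nonnegativity of the inner boundary integrand (using $\theta_\pm=0$) are also the paper's mechanism. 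Two details in your topological bookkeeping are off, though one of them matters quantitatively: in the paper the crucial bound from the level-set topology is $\chi(\Sigma_t)\le 1$, coming from the fact that each regular level set is \emph{connected, noncompact} with one planar end (Theorem~\ref{shfexistence}); your ``summed Euler characteristic at most $2$'' is both not what is needed and would fail to cancel the $4\pi L$ coming from the geodesic curvature of $\Sigma_t\cap T_L$ in the outer boundary integral. Getting $\chi\le 1$ requires the careful choice of Dirichlet constants (Lemma~\ref{l:dirichlet}) so that each boundary component contains a critical point, preventing regular level sets from touching the boundary; you should not regard this as automatic.

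Your rigidity discussion contains a genuine error. From $\bar{\nabla}^2 u\equiv 0$, i.e.\ $\nabla^2 u=-|\nabla u|\,k$, you conclude that $|\nabla u|$ is constant; this is false in general. What you actually get is $\nabla_i|\nabla u|=-k_{ij}\nu^j|\nabla u|$, so $|\nabla\log|\nabla u||\le|k|=O(r^{-1-q})$, which is integrable along paths and gives $|\nabla u|\ge c>0$ (Lemma~\ref{l:nocrit}) — not constancy. (Incidentally, constancy of $|\nabla u|$ would contradict the asymptotics when $g$ is not exactly flat.) This nonvanishing of $\nabla u$ is what forces $\partial M_{ext}=\emptyset$ (since admissible data would otherwise produce a critical point on the boundary), and then a covering-space argument (Proposition~\ref{r3}) gives $M\cong\mathbb R^3$. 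The remaining step — constructing the embedding into Minkowski space — is also left vague in your sketch; the paper builds a stationary Killing development with lapse $\alpha$ and shift $\beta$ expressed as explicit linear combinations of $|\nabla u(\vec a)|$ and $\nabla u(\vec a)$ for several $\vec a$, shows $\alpha^2-|\beta|^2\equiv 1$, produces a globally parallel null frame $\{X_\ell,\partial_t\}$ to prove flatness, and then changes coordinates by $\bar t=t-\mathbf u$ to exhibit the static Minkowski structure. Simply ``constructing three independent spacetime harmonic functions to produce global coordinates'' does not deliver this; you need the flatness of the Killing development and $\alpha^2-|\beta|^2\equiv 1$, which come directly from $\bar\nabla^2 u\equiv 0$ and the asymptotics.
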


A version of Theorem \ref{main} also holds if $(M_{ext},g,k)$ has weakly trapped boundary. That is, each boundary component satisfies $\theta_{+}\leq 0$ or $\theta_{-}\leq 0$, where the mean curvature is computed with respect to the unit normal pointing inside of $M_{ext}$. Although, in this situation the conclusion is a strict inequality $E>|P|$. Theorem \ref{main} generalizes a previous result \cite{BrayKazarasKhuriStern} in the Riemannian case when $k=0$. However, it is important to note that the boundary conditions for the harmonic functions in \cite{BrayKazarasKhuriStern} are homogeneous Neumann, as opposed to the constant Dirichlet utilized here. Thus, the lower bound of Theorem \ref{main} does not generally reduce to that of Theorem 1.2 in \cite{BrayKazarasKhuriStern} when $k=0$. In particular, we obtain an independent proof of the Riemannian results with the new boundary conditions.

The proof has some similarity with the stable MOTS approach of Eichmair, Huang, Lee, and Schoen \cite{EichmairHuangLeeSchoen} where regular level sets of the spacetime harmonic functions reprise the role of stable MOTS. The closest analogy though is to the spinorial method of Witten \cite{Witten}, where the spacetime harmonic functions play the role of spacetime harmonic spinors. While the spinor proof of the positive mass theorem also yields a lower bound for the mass, it should be noted that this is predicated on the dominant energy condition assumption. More precisely, the existence result for spacetime harmonic spinors converging to a given constant spinor in the asymptotic end relies on the dominant energy condition. On the other hand, the existence of an admissible spacetime harmonic function asymptotic to a given linear function in the asymptotic end is guaranteed regardless of any local energy hypotheses. Hence, the lower bound \eqref{mainlowerb} remains valid under general conditions far from those under which the positive mass theorem is known to hold.

Generalized exterior regions always exist for each end of an asymptotically flat initial data set satisfying the dominant energy condition, see Proposition \ref{exterior}. Therefore Theorem \ref{main} may be applied to arbitrary asymptotically flat initial data to obtain a lower bound for the difference of energy and momentum. The rigidity statement in this situation is stronger, as it implies topological rigidity and an isometric embedding into Minkowski space for the full initial data, as opposed to an isometric embedding only for the generalized exterior region. It should be pointed out that this isometric embedding may be given explicitly as a graph over a time slice in Minkowski space via a linear combination of three spacetime harmonic functions. This leads to a new and relatively simple proof of the positive mass theorem, Theorem \ref{positivemass}.

\begin{cor}\label{main1}
Let $(M,g,k)$ be a 3-dimensional complete asymptotically flat initial data set for the Einstein equations, having ADM energy $E$ and linear momentum $P$ in a chosen asymptotic end $M_{end}$. If the dominant energy condition is satisfied, then there exists a generalized exterior region $M_{ext}$ associated with $M_{end}$, which also satisfies the dominant energy condition.
Let $u$ be an admissible spacetime harmonic function on $M_{ext}$, which is asymptotic to a linear combination $\langle \vec{a},x\rangle =a_i x^i$ of asymptotically flat coordinates of the associated end, with $|\vec{a}|=1$. Then
\begin{equation}\label{mainlowerb5}
E+\langle \vec{a}, P\rangle\geq \frac{1}{16\pi} \int_{M_{ext}}\left(\frac{|\bar{\nabla}^2 u|^2}{|\nabla u|}
+2(\mu-|J|_{g})|\nabla u|\right) dV.
\end{equation}
In particular $E\geq |P|$. If $E=|P|$ then $E=|P|=0$, $M$ is diffeomorphic to $\mathbb{R}^3$, and the data $(M,g,k)$ arise from an isometric embedding into Minkowski space.
\end{cor}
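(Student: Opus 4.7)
The plan is to deduce Corollary \ref{main1} as a direct consequence of Theorem \ref{main}, after passing to a generalized exterior region and optimizing over the asymptotic direction $\vec{a}$. First, I would invoke Proposition \ref{exterior} to produce a generalized exterior region $(M_{ext}, g_{ext}, k_{ext})$ associated with $M_{end}$, whose single end is isometric as initial data to $M_{end}$ and on which the dominant energy condition is preserved. The ADM energy and linear momentum of $M_{ext}$ therefore agree with those of the chosen end of $M$. By Theorem \ref{Thm:PDE} and Lemma \ref{l:dirichlet}, for every unit vector $\vec{a}\in\mathbb{R}^3$ there exists an admissible spacetime harmonic function $u$ on $M_{ext}$ asymptotic to $\langle \vec{a}, x\rangle$. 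Applying Theorem \ref{main} to this $u$ yields the integral lower bound \eqref{mainlowerb5}; since the DEC gives $\mu-|J|_g\geq 0$ and the spacetime Hessian term is manifestly nonnegative, we deduce $E + \langle\vec{a}, P\rangle\geq 0$ for every unit vector $\vec{a}$.

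Taking $\vec{a} = -P/|P|$ when $P\neq 0$ (or any unit vector when $P=0$) immediately gives the positive mass inequality $E\geq |P|$. For the rigidity statement, suppose $E=|P|$. With the above choice of $\vec{a}$, equality holds in \eqref{mainlowerb5}, and the rigidity clause of Theorem \ref{main} forces $E=|P|=0$, $M_{ext}\cong\mathbb{R}^3$, and an isometric embedding of $M_{ext}$ into Minkowski space. With $E=|P|=0$ in hand, I can now apply Theorem \ref{main} with every unit $\vec{a}$ and obtain equality, producing in particular three spacetime harmonic functions $u_1, u_2, u_3$ on $M_{ext}$ asymptotic to the asymptotically flat coordinates $x^1, x^2, x^3$, each with vanishing spacetime Hessian $\bar\nabla^2 u_i = 0$. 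The map $(u_1, u_2, u_3)$ together with an appropriate time function $t$ then realizes $M_{ext}$ as a spacelike graph inside Minkowski spacetime, giving the explicit embedding mentioned in the introduction.

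The last step is to transfer this rigidity from $M_{ext}$ to the full initial data $(M,g,k)$. Since $M_{ext}\cong\mathbb{R}^3$ has empty boundary, the construction of Proposition \ref{exterior} must have introduced no MOTS or MITS, and combined with the trivial topology of $M_{ext}$ this should force the passage from $M$ to $M_{ext}$ to have been essentially vacuous, yielding $M\cong\mathbb{R}^3$ and extending the Minkowski embedding to all of $M$. I expect this final step---rigorously promoting rigidity from the exterior region back to $M$---to be the main technical obstacle, since it requires a careful reading of Proposition \ref{exterior} to confirm that, in the absence of nontrivial boundary or interior topology in $M_{ext}$, the generalized exterior construction introduces no genuine modification of $M$ in the rigidity case.
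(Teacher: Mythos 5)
Your treatment of the inequality part is essentially the paper's: pass to the generalized exterior region $(M_{ext},g_{ext},k_{ext})$ via Proposition \ref{exterior}, note that the ADM energy-momentum of the end is preserved, invoke existence of admissible spacetime harmonic functions (Theorem \ref{Thm:PDE}, Lemma \ref{l:dirichlet}), apply Theorem \ref{main}, and take $\vec a = -P/|P|$ to get $E\geq |P|$. That all matches.

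The gap is exactly where you flag it, but the repair you anticipate is not the one that works. You propose to argue that because $M_{ext}\cong\mathbb{R}^3$ has empty boundary and trivial topology, the construction of Proposition \ref{exterior} "must have been essentially vacuous," hence $M\cong\mathbb{R}^3$. This cannot be made to work as stated, because Proposition \ref{exterior} does not merely excise MOTS/MITS from $M$: its second step iteratively passes to finite-sheeted covers to kill the first Betti number, and the resulting $M_{ext}$ is in general a piece of some high-order cover of $M$, not a subset of $M$ (the paper says this explicitly). Triviality of $M_{ext}$ therefore tells you almost nothing directly about the topology of $M$; you cannot reverse-engineer the construction.

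What the paper actually does is different: it proves (Lemma \ref{l:nocrit} plus the critical-point requirement from Lemma \ref{l:dirichlet}(1)) that $E=|P|$ forces the generalized exterior region associated to \emph{any} end of \emph{any} admissible $(M,g,k)$ satisfying $E=|P|$ there to have empty boundary. It then uses this fact as a probe against $M$ itself rather than against $M_{ext}$. Extra asymptotic ends of $M$ would yield trapped coordinate spheres, which by barrier arguments produce a MOTS and hence a nonempty generalized exterior boundary, a contradiction. Non-orientability yields an orientable double cover with two ends, same contradiction. Once $M\cong N\#\mathbb{R}^3$ with $N$ closed orientable, residual finiteness of $\pi_1(N)$ gives, if $\pi_1(N)\neq 1$, a nontrivial finite cover with multiple ends, again a contradiction. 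Only then does Poincar\'e give $M\cong\mathbb{R}^3$, at which point $M=M_{ext}$ and the embedding you constructed via the three spacetime harmonic functions with vanishing spacetime Hessian transfers verbatim. So you should replace the reverse-engineering step with this direct MOTS-barrier-and-covers argument (Proposition \ref{r3} of the paper); without it the final paragraph of your proposal does not go through.
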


\section{Generalized Exterior Regions}
\label{sec2} \setcounter{equation}{0}
\setcounter{section}{2}

A Lichnerowicz-type integral identity for spacetime harmonic functions lies at the core of the proof of Theorem \ref{main} and Corollary \ref{main1}. It is applied to generalized exterior regions, where one may control the topology of level sets. However, the integral formula holds in much greater generality. In this section we show that generalized exterior regions always exist, and in the next section we establish the desired identity.

In Lemma 4.1 of \cite{HuiskenIlmanen} Huisken and Ilmanen established the existence of an exterior region for asymptotically flat Riemannian 3-manifolds, showing that for each asymptotic end there is such a region which is diffeomorphic to the complement of a finite union of balls in $\mathbb{R}^3$. They accomplished this by removing all compact minimal surfaces, including immersed ones, to identify the trapped region and remove it. As pointed out by Lee in \cite[page 140]{Lee}, the weaker topological simplification $H_2(M_{ext},\partial M_{ext};\mathbb{Z})=0$ may still be achieved by only removing embedded compact minimal surfaces. His proof relies on the classical result that within each nontrivial 2-dimensional homology class there exists an area minimizing minimal surface representative.
Due to the lack of a variational characterization, such a result is not currently known for MOTS. Nevertheless, the conclusion of Lee's observation still remains valid in spirit with the role of minimal surfaces replaced by that of MOTS and MITS.

\begin{prop}\label{exterior}
Let $(M,g,k)$ be a smooth asymptotically flat initial data set satisfying the dominant energy condition. Then for each end $M_{end}$, there exists a new initial data set $(M_{ext},g_{ext},k_{ext})$ having a single end which is isometric (as initial data) to $(M_{end},g,k)$. Furthermore, $M_{ext}$ is orientable, satisfies $H_2(M_{ext},\partial M_{ext};\mathbb{Z})=0$, and has a boundary
$\partial M_{ext}$ consisting entirely of MOTS and MITS.
\end{prop}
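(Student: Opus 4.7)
The plan is to construct $M_{ext}$ in three stages, modeled on the Huisken--Ilmanen / Lee argument in the Riemannian case with minimal surfaces replaced by MOTS and MITS. First, I would handle orientability. The specified end $M_{end}\cong\R^3\setminus B_1$ is orientable, so if $M$ is non-orientable one passes to the orientable double cover, in which $M_{end}$ has two isometric preimage copies; retaining one yields a connected orientable asymptotically flat initial data set, still satisfying the dominant energy condition, which I continue to denote by $(M,g,k)$.

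Next, I would remove a ``trapped region'' to introduce the MOTS/MITS boundary. Using the outermost MOTS/MITS existence theory of Andersson--Metzger and Eichmair, one produces a smooth compact set $T\subset M$ whose boundary (with normals pointing into the end-component of $M\setminus T$) is a finite disjoint union of MOTS and MITS, with $M_{end}$ lying in the end-component. The Plateau-type existence theorem for MOTS supplies the input: large coordinate spheres in $M_{end}$ serve as outer-untrapped barriers at spatial infinity, while interior regions provide trapped inner barriers coming from any compact source of gravitational strength in $M$. Take $M_{ext}$ to be the closure of the end-component; by construction it is orientable, has a single asymptotically flat end isometric to $M_{end}$, and has boundary of the required form.

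The remaining task is to kill $H_2(M_{ext},\partial M_{ext};\Z)$. If this group is nontrivial, I would pick a nontrivial class together with a smoothly embedded separating representative $\Sigma\subset M_{ext}$ dividing it into an end-containing piece and a compact piece $U$. The goal is to produce a MOTS or MITS representing the class, after which I cap off and absorb the compact side into the already-discarded interior, and then repeat. To obtain the representative, I would invoke the MOTS Plateau theorem with a suitable barrier inside $U$ (possibly manufactured via Jang's equation) as the outer-trapped inner barrier and a large coordinate sphere in $M_{end}$ as the outer-untrapped outer barrier; if the MOTS construction fails for want of a compatible inner barrier, the symmetric construction using $\theta_-$ instead yields a MITS. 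Iteration terminates by standard area/compactness estimates for stable MOTS under the DEC, since each step strictly reduces the rank of $H_2(\,\cdot\,,\partial\,\cdot\,;\Z)$.

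The hard part is precisely step three: unlike minimal surfaces, MOTS and MITS have no variational characterization in a prescribed homology class, so area minimization is unavailable. The proof must instead rely on the non-variational Plateau-type existence theory of Andersson--Metzger and Eichmair, and the technical heart of the argument lies in arranging compatible barrier pairs in each nontrivial class and verifying that the resulting surface is smoothly embedded and lies in the prescribed homology class. Once this existence step is in hand, the iterative capping-off and the termination argument are a direct adaptation of Lee's homological simplification scheme for the Riemannian case.
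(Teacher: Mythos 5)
Your Step 3 contains a genuine gap, and it is precisely the gap the paper is designed to avoid. You propose to pick a nontrivial class in $H_2(M_{ext},\partial M_{ext};\mathbb{Z})$, choose a smoothly embedded separating representative $\Sigma$, and then produce a MOTS or MITS \emph{representing that class} via Plateau-type existence. But the MOTS Plateau theory of Andersson--Metzger and Eichmair gives no control over the homology class of the surface it produces: given a trapped inner barrier and an untrapped outer barrier, it yields \emph{some} MOTS in between (typically the outermost one), with no mechanism for prescribing its class. Since there is no area functional being minimized, there is no way to constrain the MOTS to live in the relative homology class of your chosen $\Sigma$. Consequently the claim that ``each step strictly reduces the rank of $H_2$'' is unjustified, and the iteration has no reason to terminate. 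Your fallback --- ``if the MOTS construction fails, the symmetric construction using $\theta_-$ yields a MITS'' --- likewise has no justification: neither construction need produce a surface in the desired class, and there is no dichotomy ensuring one of them does. You explicitly flag this as ``the hard part'' and ``the technical heart,'' but you do not resolve it; it is precisely the point at which your argument and the paper's diverge.

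The paper's actual route is structured to sidestep this entirely. In Step 1 it does not merely carve out an outermost trapped region; it passes to strictly-DEC perturbations and solves Jang's equation to show that the region $M'$ outside a limiting family of MOTS/MITS has \emph{positive-scalar-curvature topology}: up to removing finitely many balls, $M'\cong\#^l(S^1\times S^2)\#N$ with $N$ a rational homology sphere. This topological control is what your outermost-MOTS step alone does not supply. In Step 2 the paper attacks $b_1(M')$ rather than $H_2(\cdot,\partial\cdot;\mathbb{Z})$ directly (the two are identified by Poincar\'e--Lefschetz duality): it cuts along an essential $S^2$ in one of the $S^1\times S^2$ handles to build a connected double cover $\overline{M}$ with $b_1(\overline{M})=2b_1(M')-1$; in $\overline{M}$ it runs outermost MOTS existence \emph{once} between the two asymptotically flat ends (where there genuinely are compatible trapped/untrapped barriers, namely large coordinate spheres), obtaining a separating spherical MOTS $\Sigma$; Mayer--Vietoris for the decomposition $\overline{M}=\overline{M}_{out}\cup\overline{M}_{in}$, using $H_1(\Sigma;\mathbb{R})=0$, then forces one side to have $b_1<b_1(M')$. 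Iterating this cover-and-cut procedure kills $b_1$. The key device is the \emph{covering-space trick plus a single application of outermost MOTS existence between two ends}, which uses barriers that are always present, rather than trying to manufacture homology-class-specific barrier pairs as in your proposal.
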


\begin{proof}
There are two primary steps. The first is to identify appropriate (possibly immersed) MOTS and MITS to remove from $M$ in order to obtain a subset $M'\supset M_{end}$, whose compactification admits a positive scalar curvature metric. The second step entails reducing the first Betti number of $M'$ to zero via an iterative process which involves passing to finite sheeted covers. The proof of the first step is based on a reorganization of the arguments used for \cite[Theorem 1.2]{AnderssonDahlGallowayPollack}, and thus only an outline of the main ideas will be given here. The second step will be described in detail. In what follows, we assume without loss of generality that $M$ is orientable by passing to the orientable double cover if necessary.

According to \cite[Theorem 22]{EichmairHuangLeeSchoen} there is a sequence of perturbed initial data $(M,g_i,k_i)$ with $g_i \rightarrow g$ in $W^{3,p}_{-q}(M)$ and $k_i \rightarrow k$ in $W^{2,p}_{-q-1}(M)$ as $i\rightarrow\infty$ for $p>3$, such that a strict dominant energy condition is satisfied $\mu_i> |J_i|_{g_i}$.
To this end, solve the Jang equation \cite[Proposition 7]{Eichmair} for $(M,g_i,k_i)$ with standard asymptotic decay in each end. Note that the assumed decay on $\mathrm{Tr}_g k$ is not in general sufficient to guarantee bounded solutions of Jang's equation near infinity. However, as pointed out in \cite[Remarks 2.2 and 3.1]{AnderssonDahlGallowayPollack}, this technicality can be avoided by an appropriate deformation of the initial data in the asymptotic ends.
The solution of Jang's equation gives rise to a hypersurface in $\mathbb{R}\times M$ which is a vertical graph over an open subset of $M$ containing the asymptotic ends; $\Omega_i\subset M$ will denote the component of this open set that contains the designated end $M_{end}$. The components of the boundary $\partial\Omega_i$ are spherical MOTS or MITS that satisfy a uniform $C$-almost minimization property \cite[Remark 2.3]{AnderssonDahlGallowayPollack}, \cite{Eichmair-1}. Note that the spherical
topology is due to the strict dominant energy condition and stability property of the Jang graph. Observe that due to the strict dominant energy condition, the proof of \cite[Theorem 1.2]{AnderssonDahlGallowayPollack} shows that a conformal change of metric may be introduced, after preliminary deformations along the asymptotically cylindrical ends as well as in the asymptotically flat ends, to arrive at a positive scalar curvature (PSC) metric on the manifold obtained by compactifying the asymptotically flat ends of $\Omega_i$, which also has a Riemannian product structure near each boundary component.

Next, by the compactness theory of \cite{Eichmair-1,Eichmair0}, the sequence $\partial\Omega_i$ subconverges in the $C^{2,\alpha}$ local graph sense to a set $\mathcal{S}$ which is a finite collection of MOTS $\{\mathcal{S}^+_{a}\}_{a=1}^{a_0}$ and MITS $\{\mathcal{S}^-_{b}\}_{b=1}^{b_0}$ in $(M,g,k)$. Moreover, each of these MOTS and MITS arises from a sequence of connected closed properly embedded MOTS $\mathcal{S}^+_{ai}\subset \partial\Omega_i$ or MITS $\mathcal{S}^-_{bi}\subset \partial\Omega_i$ with respect to $(g_i,k_i)$.
We claim that $\mathcal{S}$ is a smooth submanifold. If a MOTS $\mathcal{S}^+_a$ or a MITS $\mathcal{S}^-_b$ remains disjoint from the other MOTS and MITS of $\mathcal{S}$, then this component is a smooth submanifold.
If $\mathcal{S}^+_{a}$ or $\mathcal{S}^-_b$ has nontrivial intersection and does not coincide with another member of the MOTS and MITS comprising $\mathcal{S}$,
this violates the $C$-almost minimization property of $\partial\Omega_i$ for large $i$. Thus the MOTS and MITS in $\mathcal{S}$ are pairwise disjoint, and hence
are smooth submanifolds.

To conclude the first step, remove the surface $\mathcal{S}$ from $M$ and take the metric completion of the component containing the designated end $M_{end}$ to obtain an initial data set $(M',g,k)$. Note that this contains $(M_{end},g,k)$, has boundary components consisting entirely of smooth MOTS and MITS, and the topology of $M'$ agrees with that of $\Omega_i$ for large $i$. Because $\Omega_i$ admits a PSC metric having Riemannian product structure near each boundary component, we may apply the prime decomposition theorem along with a result of Gromov-Lawson \cite{GromovLawson} and the resolution of the Poincar\'{e} conjecture to deduce that manifold $M'$ has PSC topology. That is, $M'$ is diffeomorphic to a finite connected sum of spherical spaces, $S^1\times S^2$'s, and $\mathbb{R}^3$'s representing the ends, all with a finite number of 3-balls removed which indicate the horizons. Thus, to conclude the first step of the proof, we have produced an asymptotically flat initial data set $(M',g,k)$ having PSC topology, with boundary $\partial M'$ consisting of MOTS and MITS components, and is such that one of the ends coincides with $(M_{end},g,k)$.


In the second step of the proof the first Betti number of $M'$ will be reduced to zero with an iterative procedure. Since $H_2(M',\partial M';\mathbb{Z})$ is Poincar{\'{e}} dual to $H^1(M';\mathbb{Z})$, which is itself isomorphic to the torsion-free subgroup of $H_1(M';\mathbb{Z})$, this procedure will result in the desired conclusion of vanishing second homology relative to the boundary.
As observed above, $M'$ can be expressed as the compliment of finitely many disjoint balls in $\#^l (S^1\times S^2)\# N$ where $N$ is a rational homology sphere. Since $N$ has vanishing first Betti number, $b_1(M')$ is equal to the number of its handle $S^1\times S^2$ summands. We proceed by constructing a particular double cover of $M'$. Let $\Sigma'\subset \mathring{M}'$ be the image of an embedding of $S^2$ in one of the $S^1\times S^2$ summands of $M'$ which is homologous $\{\mathrm{pt}\}\times S^2\subset S^1\times S^2$. Define $W$ to be the metric completion of $M'\setminus\Sigma'$ and notice that its boundary can be decomposed as
\begin{equation}
\partial W=\partial M'\cup\Sigma'_1\cup\Sigma'_2,
\end{equation}
where $\Sigma'_1$ and $\Sigma'_2$ are copies of $\Sigma'$. Next, consider the manifold
\begin{equation}
\overline{M}=W_1\sqcup W_2/\sim,
\end{equation}
where $W_1$ and $W_2$ are copies of $W$ and the relation $\sim$ identifies $\Sigma'_1\subset W_1$ with $\Sigma'_2\subset W_2$ and $\Sigma'_2\subset W_1$ with $\Sigma'_1\subset W_2$. The manifold $\overline{M}$ is a two-fold cover of $M$, classified by the mod $2$ reduction of the cohomology class Poincar{\'e} dual to $[\Sigma]$, and the pullback of the data $(g,k)$ to $\overline{M}$ will be denoted by $(\overline{g},\overline{k})$. Furthermore, observe that $\overline{M}$ is diffeomorphic to the complement of finitely many disjoint balls in
\begin{equation}
\left(\#^{l-1}(S^1\times S^2)\# N\right)\# (S^1\times S^2)\#\left(\#^{l-1}(S^1\times S^2)\# N\right),
\end{equation}
so that
\begin{equation}\label{e:betti}
b_1(\overline{M})=2b_1(M')-1.
\end{equation}

Consider the two ends of $\overline{M}$ that are isometric to $M_{end}$, and choose one for reference and denote it by $\mathcal{E}$. The boundary of the double cover may be decomposed as $\partial\overline{M}=\partial_+ \overline{M}\cup\partial_-\overline{M}$, where $\theta_\pm=0$ on $\partial_\pm \overline{M}$ and the null expansions are computed with respect to the unit normal pointing inside $\overline{M}$. Now let $\mathcal{D}\subset \overline{M}$ be the bounded component that remains after removing sufficiently large coordinate spheres in each of the asymptotic ends of $\overline{M}$. The boundary may be decomposed into two types of surfaces $\partial\mathcal{D}=\partial_{out}\mathcal{D}\cup\partial_{in}\mathcal{D}$, in which $\theta_+\geq 0$ on $\partial_{out}\mathcal{D}$ with respect to the normal pointing out of $\mathcal{D}$, and $\theta_+\leq 0$ on $\partial_{in}\mathcal{D}$ with respect to the normal pointing into $\mathcal{D}$. Note that MOTS boundary components belong to $\partial_{in}\mathcal{D}$, while MITS components belong to $\partial_{out}\mathcal{D}$. Moreover the coordinate sphere boundary in $\mathcal{E}$ satisfies the strict inequality $\theta_+>0$ and belongs to $\partial_{out}\mathcal{D}$, while the coordinate sphere boundaries lying in the remaining ends satisfy the strict inequality $\theta_+<0$ and belong to $\partial_{in}\mathcal{D}$. It follows that we may apply the MOTS existence result \cite[Theorem 4.2]{EichmairGallowayPollack}, or rather a slight generalization of it to allow for nonstrict inequalities (see \cite[Section 5]{AnderssonMetzger} or \cite[Remark 4.1]{Eichmair0}), to obtain an outermost (with respect to $\mathcal{E}$) MOTS  $\Sigma\subset \mathcal{D}$ that separates $\partial_{out}\mathcal{D}$ from $\partial_{in}\mathcal{D}$. Furthermore, this surface separates $\overline{M}$ into two disjoint regions $\overline{M}\setminus \Sigma=\overline{M}_{out}\cup\overline{M}_{in}$, where $\overline{M}_{out}$ is the component containing the reference end $\mathcal{E}$, see Figure \ref{pic:section2}.

In the remainder of the argument, we will first consider the case in which $(\overline{M},\overline{g},\overline{k})$ satisfies a strict dominant energy condition, and will subsequently explain the alterations required for the general case. By the strict dominant energy condition, stability of outermost MOTS, and orientability of $\overline{M}$, it follows that $\Sigma$ consists of finitely many disjoint embedded spheres. Now consider the Mayer-Vietoris sequence associated with the decomposition $\overline{M}=\overline{M}_{out}\cup\overline{M}_{in}$, that is
\begin{equation}
\begin{tikzcd}
\cdots\arrow[r]&
H_1(\Sigma;\mathbb{R})\arrow[r]&
H_1(\overline{M}_{out};\mathbb{R})\oplus H_1(\overline{M}_{in};\mathbb{R})\arrow[r]&
H_1(\overline{M};\mathbb{R})\arrow[r]&\cdots.
\end{tikzcd}
\end{equation}
Since $H_1(\Sigma;\mathbb{R})=0$ we find that
that $b_1(\overline{M}_{out})+b_1(\overline{M}_{in})\leq b_1(\overline{M})$. Taking \eqref{e:betti} into consideration shows that either $\overline{M}_{out}$ or $\overline{M}_{in}$ must have first Betti number strictly less than $b_1(M')$; label the component of this manifold that contains an isometric copy of $M_{end}$, by $\overline{M}'$. Notice that each component of the boundary of $\overline{M}'$ is either a MOTS or a MITS. Moreover, as $\Sigma$ is spherical, the sets $\overline{M}_{out}$ and $\overline{M}_{in}$ give rise to a connected sum decomposition of $\overline{M}$. It follows that both $\overline{M}_{out}$ and $\overline{M}_{in}$ are diffeomorphic to the compliment of finitely many disjoint balls in the connected sum of $S^1\times S^2$'s and a rational homology sphere. Furthermore, we may assume that $\overline{M}'$ has a single end, since if necessary attention may be restricted to the region outside the outermost MOTS to isolate the isometric copy of $M_{end}$.
This same procedure can be applied to $\overline{M}'$ to once again reduce the first Betti number by at least one. Continuing in this manner yields the desired initial data $(M_{ext},g_{ext},k_{ext})$.

\begin{figure}
\begin{picture}(-10,0)
\put(-5,50){\large{$(M,g,k)$}}
\put(67,30){{\color{red}\Large{$\Sigma'$}}}
\put(155,55){\large{$(\overline{M},\overline{g},\overline{k})$}}
\put(220,130){{\color{blue}\Large{$\overline{M}_{out}$}}}
\put(295,50){\large{$(M_{ext},g_{ext},k_{ext})$}}
\end{picture}
\includegraphics[scale=.4]{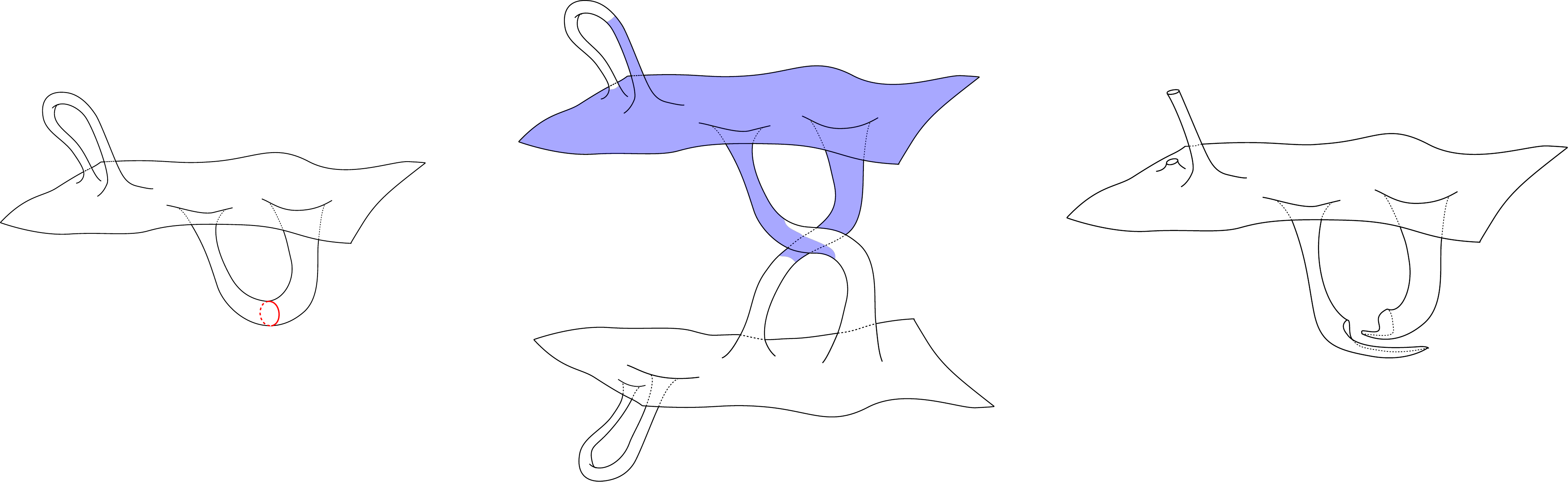}
\caption{A schematic description of the stages in the second step in the proof of Proposition \ref{exterior}.}\label{pic:section2}
\end{figure}

To finish, we describe the modifications necessary to accomplish the construction in the above paragraph in the general case when $(\overline{M},\overline{g},\overline{k})$ satisfies the dominant energy condition, but not strictly so. In this case, apply the approximating argument from the first step to obtain a sequence $(\overline{g}_i,\overline{k}_i)$ on $\overline{M}$ satisfying the strict dominant energy condition and which converges to $(\overline{g},\overline{k})$. Note that a minor refinement of \cite[Theorem 22]{EichmairHuangLeeSchoen} is required for this due to the presence of boundary components, see \cite[footnote - page 869]{AnderssonDahlGallowayPollack}. The outermost MOTS $\Sigma_i$ that induces a separation $\overline{M}=\overline{M}_{out}^i \cup \overline{M}_{in}^i$, admits the $C$-almost minimization property and consists of spherical MOTS and MITS.
By the arguments of the previous paragraph, the first Betti number of either $\overline{M}_{out}^i$ or $\overline{M}_{in}^i$ is strictly less than $b_1(M')$. As described in the first step of the proof, $\Sigma_i$ subconverges to a limiting MOTS/MITS surface $\overline{\mathcal{S}}$ in $\overline{M}$, and we may consider the metric completion $\widehat{M}$ of $\overline{M}\setminus\overline{\mathcal{S}}$. The two components of $\widehat{M}$ containing the isometric copies of $M_{end}$, have the
same topology as components of $\overline{M}_{out}^i$ or $\overline{M}_{in}^i$ for sufficiently large $i$. It follows that one of them, $\widehat{M}'$, satisfies $b_1(\widehat{M}')<b_1(M')$. As above it may be assumed that the component $\widehat{M}'$ possesses one end modeling $\mathcal{E}$. Moreover its boundary consists of MOTS and MITS, and it is diffeomorphic to the compliment of finitely many disjoint balls in the connected sum of $S^1\times S^2$'s and a rational homology sphere. Thus the iteratation may be continued to obtain the desired conclusion.
\end{proof}

\section{The Integral Formula}
\label{sec2.5} \setcounter{equation}{0}
\setcounter{section}{3}

Let $(\widetilde{M}^4,\widetilde{g})$ be an asymptotically flat spacetime with initial data slice $(M,g,k)$. In the time-symmetric case when $k=0$ the positive mass theorem was proven \cite{BrayKazarasKhuriStern} using asymptotically linear harmonic functions on $(M,g)$.
In order to find an analogue in the spacetime setting, it is helpful to obtain intuition
from the case of equality. If the mass vanishes, $m=0$, then the spacetime in which the initial data set resides should be Minkowski space $\mathbb{M}^4$. In the time symmetric case the initial data coincide with a constant time slice, and the harmonic functions used are simply the linear functions of the coordinates of this slice, that is $a_i x^i$ where $a_i$, $i=1,2,3$ are constants. For nonconstant time slices, a natural generalization would be to use linear functions of the coordinates in $\mathbb{M}^4$, that is $a_0 x^0+a_i x^i$, and restrict this function to the slice $(M,g,k)$. It then remains to find a canonical equation induced on the slice which is satisfied by these functions.

To this end, let $\widetilde{\nabla}$ and $\nabla$ denote the Levi-Civita connections of the spacetime and slice, respectively. Since the Minkowski coordinate functions have vanishing spacetime Hessian, their linear combinations restricted to the slice must be in the kernel of the
hypersurface spacetime Laplacian $\widetilde{\Delta}=g^{ij}\widetilde{\nabla}_{ij}$. In a general spacetime, we are then motivated to consider functions $\widetilde{u}\in C^{\infty}(\widetilde{M}^4)$ which satisfy
\begin{equation}\label{eq:prespacetimehess}
0=\widetilde{\Delta}\widetilde{u}=g^{ij}\left(\nabla_{ij}\widetilde{u}-k_{ij} \mathbf{n}(\widetilde{u})\right)
=\Delta\widetilde{u}-\left(\mathrm{Tr}_g k\right)\mathbf{n}(\widetilde{u})\quad\text{ on }\quad M,
\end{equation}
where $\mathbf{n}$ is the unit timelike normal to the slice.
Notice that $\widetilde{\Delta}\widetilde{u}$ can be considered as the divergence of $\widetilde{\nabla}u|_M$
using the spacetime connection $\widetilde{\nabla}$ acting on sections of the induced bundle $T\widetilde{M}|_{M}$.
This is in direct analogy to the spinorial proof of the positive mass theorem where Witten \cite{Witten} considers a
Dirac operator defined using a connection induced by $\widetilde{M}$. Equation \eqref{eq:prespacetimehess}, however, does not depend solely on the restriction $u=\widetilde{u}|_{M}$ due to the presence of the normal derivative. A choice for $\mathbf{n}(\widetilde{u})$ must then be made in order to obtain a purely intrinsic equation on the slice. It turns out that the desired choice for our purposes is to choose the normal derivative so that the spacetime gradient of $\widetilde{u}$ is null, that is $\mathbf{n}(\widetilde{u})=-|\nabla u|$.

With the above discussion in mind, we make the following definitions. Given initial data $(M,g,k)$ and a function $u\in C^2(M)$, set
\begin{equation}\label{spacetimehess}
\bar{\nabla}^2u=\nabla^2u+k|\nabla u|.
\end{equation}
A function $u\in C^{2}(M)$ is {\emph{spacetime harmonic}} with respect to initial data $(M,g,k)$ if $\bar{\Delta}u=0$ where
\begin{equation}\label{shf}
\bar{\Delta}u=\mathrm{Tr}_g\bar{\nabla}^2u=\Delta u+\mathcal{K}|\nabla u|.
\end{equation}
In the above, we have made use of the shorthand $\mathcal{K}=\mathrm{Tr}_gk$. Note that \eqref{spacetimehess} agrees with the spatial components of the spacetime Hessian when the normal derivative off of the slice is chosen as above.

Before stating the primary integral formula for spacetime harmonic functions, we give a technical lemma based on a refined version of Kato's inequality. This will be used in the proof of the main result of this section. Note that the natural regularity for spacetime harmonic functions is $C^{2,\alpha}(M)$, $0<\alpha<1$. By Rademacher's theorem $|\nabla u|$ is then differentiable almost everywhere, and from the equation the same holds for $\Delta u$. Thus, the inequality of the next result holds away from a set of measure zero.

\begin{lemma}\label{kato}
Let $u$ be a spacetime harmonic function for the initial data set $(M,g,k)$. Then there exists a constant $C>0$ depending only on $\mathcal{K}$ and its first derivatives such that
\begin{equation}\label{890}
|\nabla^2 u|^2 -|\nabla|\nabla u||^2 +\langle\nabla u,\nabla\Delta u\rangle \geq
-C|\nabla u|^2.
\end{equation}
\end{lemma}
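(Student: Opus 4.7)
The plan is to verify the inequality pointwise almost everywhere. At a point $p$ where $\nabla u\neq 0$ and $|\nabla u|$ is differentiable, I would choose a local orthonormal frame $\{e_1,e_2,e_3\}$ with $e_1=\nabla u/|\nabla u|$ at $p$, so that $u_1=|\nabla u|$ and $u_2=u_3=0$. Differentiating the identity $|\nabla u|^2=\sum_i u_i^2$ then gives $\nabla_k|\nabla u|=u_{1k}$ at $p$, hence
\begin{equation*}
|\nabla|\nabla u||^2=u_{11}^2+u_{12}^2+u_{13}^2,
\end{equation*}
while $|\nabla^2u|^2=\sum_{i,j}u_{ij}^2$. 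Subtracting yields the standard refined Kato identity
\begin{equation*}
|\nabla^2u|^2-|\nabla|\nabla u||^2=u_{22}^2+u_{33}^2+u_{12}^2+u_{13}^2+2u_{23}^2.
\end{equation*}

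Next I would compute $\langle\nabla u,\nabla\Delta u\rangle$ using the spacetime harmonic equation $\Delta u=-\mathcal{K}|\nabla u|$. Taking the gradient gives $\nabla_k\Delta u=-\mathcal{K}_k|\nabla u|-\mathcal{K}\nabla_k|\nabla u|$, and evaluating the inner product against $\nabla u$ in the chosen frame yields
\begin{equation*}
\langle\nabla u,\nabla\Delta u\rangle=-\mathcal{K}_1|\nabla u|^2-\mathcal{K}|\nabla u|u_{11}.
\end{equation*}
The first term is clearly bounded below by $-|\nabla\mathcal{K}||\nabla u|^2$, which is absorbed into the $-C|\nabla u|^2$ on the right-hand side. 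The key is then to handle $-\mathcal{K}|\nabla u|u_{11}$; for this I would substitute the trace relation $u_{11}=-\mathcal{K}|\nabla u|-u_{22}-u_{33}$ obtained from $\Delta u=-\mathcal{K}|\nabla u|$, giving
\begin{equation*}
-\mathcal{K}|\nabla u|u_{11}=\mathcal{K}^2|\nabla u|^2+\mathcal{K}|\nabla u|(u_{22}+u_{33}).
\end{equation*}

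The cross term is then controlled by the nonnegative $u_{22}^2+u_{33}^2$ pieces appearing on the left. Explicitly, AM-GM combined with $(u_{22}+u_{33})^2\leq 2(u_{22}^2+u_{33}^2)$ gives
\begin{equation*}
\mathcal{K}|\nabla u|(u_{22}+u_{33})\geq -(u_{22}^2+u_{33}^2)-\tfrac{1}{2}\mathcal{K}^2|\nabla u|^2.
\end{equation*}
Adding everything together, the $u_{22}^2$ and $u_{33}^2$ terms cancel, the surviving $\tfrac{1}{2}\mathcal{K}^2|\nabla u|^2$ is nonnegative, and the remaining squares $u_{12}^2+u_{13}^2+2u_{23}^2$ are nonnegative, leaving
\begin{equation*}
|\nabla^2u|^2-|\nabla|\nabla u||^2+\langle\nabla u,\nabla\Delta u\rangle\geq-|\nabla\mathcal{K}||\nabla u|^2\geq-C|\nabla u|^2,
\end{equation*}
where $C$ depends only on $\|\nabla\mathcal{K}\|_{\infty}$ (the $\mathcal{K}^2$ dependence actually enters with a favorable sign).

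The main obstacle is purely regularity-theoretic rather than algebraic: since $u\in C^{2,\alpha}$, the function $|\nabla u|$ is merely Lipschitz, so the identities involving $\nabla|\nabla u|$ hold only almost everywhere. Rademacher's theorem supplies differentiability off a set of measure zero, and the frame argument applies at any such point where additionally $\nabla u\neq 0$. On the remaining measure-zero set the inequality is interpreted a.e.\ as stated in the paragraph preceding the lemma, and no further analysis is needed since both sides scale with $|\nabla u|$ and vanish continuously as $|\nabla u|\to 0$.
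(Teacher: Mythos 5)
Your proof is correct and takes a genuinely different, and arguably more transparent, route than the paper. The paper splits the estimate into two independent inequalities: (i) a lower bound $\langle\nabla u,\nabla\Delta u\rangle\geq -\tfrac{1}{4}|\nabla|\nabla u||^2-C_1|\nabla u|^2$ obtained by Young's inequality on the spacetime harmonic equation, and (ii) a refined Kato inequality $|\nabla^2 u|^2\geq\tfrac{5}{4}|\nabla|\nabla u||^2-C_2|\nabla u|^2$ proved via an auxiliary traceless symmetric tensor $W_{ij}$ built from $X_i=\tfrac{1}{2}\partial_i|\nabla u|^2-\tfrac13(\Delta u)u_i$, with a Cauchy--Schwarz argument to control $|X|$. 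Combining (i) and (ii) yields the lemma. You instead argue pointwise in an adapted orthonormal frame $e_1=\nabla u/|\nabla u|$, express all three terms in that frame, substitute the trace identity $u_{11}=-\mathcal{K}|\nabla u|-u_{22}-u_{33}$, and absorb the single cross term $\mathcal{K}|\nabla u|(u_{22}+u_{33})$ against the surviving diagonal squares by AM--GM. This collapses the two-step structure into one direct computation and makes clear exactly which pieces of the Hessian survive ($u_{12}^2+u_{13}^2+2u_{23}^2$ plus a favorably signed $\tfrac12\mathcal{K}^2|\nabla u|^2$). A side benefit of your route is that the constant $C$ you obtain depends only on $\|\nabla\mathcal{K}\|_\infty$, slightly improving the stated dependence on ``$\mathcal{K}$ and its first derivatives.'' Your treatment of the regularity issue and of the set $\{|\nabla u|=0\}$ also matches the paper's. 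The only thing the paper's route buys is that its refined Kato inequality is a self-contained intermediate statement, but as a proof of the lemma itself your version is cleaner.
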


\begin{proof}
By using the spacetime harmonic function equation \eqref{shf} we have
\begin{equation}\label{-=}
\langle\nabla u,\nabla\Delta u\rangle \geq
-\mathcal{K}\langle\nabla u,\nabla |\nabla u|\rangle
-C_0 |\nabla u|^2\geq -\frac{1}{4}|\nabla|\nabla u||^2 -C_1 |\nabla u|^2.
\end{equation}
Moreover, a refined version of the Kato inequality produces
\begin{equation}\label{rk}
|\nabla^2 u|^2\geq \frac{5}{4}|\nabla|\nabla u||^2 -C_2 |\nabla u|^2.
\end{equation}
Note that as discussed above, these inequalities hold almost everywhere. Combining \eqref{-=} and \eqref{rk} yields the desired result.

It remains to establish \eqref{rk}. To this end denote $u_i =\partial_i u$ and set
\begin{equation}
X_i=\frac{1}{2}\partial_i |\nabla u|^2-\frac{1}{3}(\Delta u)u_i,\quad\quad
W_{ij}=X_{(i}u_{j)}-\frac{1}{3}\langle X,\nabla u\rangle g_{ij},
\end{equation}
where parentheses are used to indicate symmetrization of indices. Observe that
\begin{align}
\begin{split}
|W|^2=&X^i u^j \left( X_{(i}u_{j)}-\frac{1}{3}\langle X,\nabla u\rangle g_{ij}\right)\\
=&\frac{1}{2}|X|^2 |\nabla u|^2 +\frac{1}{6}\langle X,\nabla u\rangle^2\\
\leq & \frac{2}{3}|X|^2 |\nabla u|^2,
\end{split}
\end{align}
which implies that
\begin{align}
\begin{split}
\frac{1}{2}X^i \partial_i |\nabla u|^2 =& X^i u^j \nabla_{ij}u\\
=&X^i u^j \left(\nabla_{ij}u -\frac{1}{3}(\Delta u)g_{ij}\right)+\frac{1}{3}(\Delta u)\langle X,\nabla u\rangle\\
=&W^{ij} \left(\nabla_{ij}u -\frac{1}{3}(\Delta u)g_{ij}\right)+\frac{1}{3}(\Delta u)\langle X,\nabla u\rangle\\
\leq&|W|\sqrt{|\nabla^2 u|^2-\frac{1}{3}(\Delta u)^2}+\frac{1}{3}(\Delta u)\langle X,\nabla u\rangle\\
\leq &\sqrt{\frac{2}{3}}|X||\nabla u|\sqrt{|\nabla^2 u|^2-\frac{1}{3}(\Delta u)^2}+\frac{1}{3}(\Delta u)\langle X,\nabla u\rangle.
\end{split}
\end{align}
It follows that
\begin{equation}
|X|\leq\sqrt{\frac{2}{3}}|\nabla u|\sqrt{|\nabla^2 u|^2-\frac{1}{3}(\Delta u)^2}.
\end{equation}
Squaring both sides, utilizing the spacetime harmonic function equation, and applying Young's inequality then gives
\begin{align}
\begin{split}
|\nabla u|^2|\nabla^2 u|^2\geq &\frac{1}{3}(\Delta u)^2|\nabla u|^2+\frac{3}{2}|X|^2\\
=&\frac{1}{2}(\Delta u)^2 |\nabla u|^2+\frac{3}{2}|\nabla|\nabla u||^2 |\nabla u|^2
-(\Delta u)|\nabla u|\langle\nabla u,\nabla|\nabla u|\rangle\\
\geq&\frac{5}{4}|\nabla|\nabla u||^2|\nabla u|^2-C_2|\nabla u|^4.
\end{split}
\end{align}
This gives inequality \eqref{rk}, if $|\nabla u|\neq 0$. At points where $|\nabla u|=0$ and $|\nabla u|$ is differentiable, we have that $|\nabla |\nabla u||=0$ since the nonnegative function $|\nabla u|$ achieves its minimum value. Inequality \eqref{rk} thus holds trivially at such points. The remaining points, where $|\nabla u|=0$ and $|\nabla u|$ is not differentiable, form a set of measure zero.
\end{proof}

We are now in a position to establish the main integral formula for spacetime harmonic functions. This may be viewed as a generalization of \cite[Proposition 4.2]{BrayKazarasKhuriStern}, see also \cite{BrayStern,Stern}.

\begin{prop}\label{SpacetimeSternCompact}
Let $(\Omega,g,k)$ be a 3-dimensional oriented compact initial data set with smooth boundary $\partial\Omega$, having outward unit normal $\upsilon$. Let $u:\Omega\to\mathbb{R}$ be a spacetime harmonic function, and denote
the open subset of $\partial\Omega$ on which $|\nabla u|\neq 0$ by $\partial_{\neq 0}\Omega$. If $\overline{u}$ and $\underline{u}$ denote the maximum and minimum values of $u$ and $\Sigma_t$ are $t$-level sets, then
\begin{equation}
\int_{\partial_{\neq 0} \Omega}\left(\partial_{\upsilon}|\nabla u|+k(\nabla u,\upsilon)\right)dA\geq\int_{\underline{u}}^{\overline{u}}\int_{\Sigma_t}
\left(\frac{1}{2}\frac{|\bar{\nabla}^2 u|^2}{|\nabla u|^2}+\mu+J(\nu)-K\right)dA dt,
\end{equation}
where $\nu=\tfrac{\nabla u}{|\nabla u|}$ and $K$ is the level set Gauss curvature.
\end{prop}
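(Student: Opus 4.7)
The plan is to derive a pointwise identity on the open set $\{|\nabla u|>0\}$ that recovers the integrand on the right-hand side, and then integrate via the divergence theorem while carefully exhausting the critical set. Setting $v=|\nabla u|$ and $\nu=\nabla u/v$, the target identity is
\begin{equation*}
\Delta v+\div(k(\nabla u,\cdot)^{\sharp})=\frac{|\bar{\nabla}^2 u|^2}{2v}+v(\mu+J(\nu)-K),
\end{equation*}
where the RHS integrand of the proposition appears after division by $v$. Integrating this against $dV$, applying the divergence theorem on $\Omega$, and invoking coarea then produces the stated inequality, with the strict $\geq$ emerging from the loss of a nonpositive contribution at the critical set.

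For the pointwise identity, I would start with Bochner, $v\Delta v+|\nabla v|^2=|\nabla^2 u|^2+\langle\nabla u,\nabla\Delta u\rangle+\Ric(\nabla u,\nabla u)$, and substitute $\Delta u=-\mathcal{K}v$ from the spacetime harmonic equation to expand $\langle\nabla u,\nabla\Delta u\rangle=-v^2\nu(\mathcal{K})-\mathcal{K}v\,\nu(v)$. In parallel, the momentum constraint $\nabla^j k_{ij}=J_i+\nabla_i\mathcal{K}$ gives $\div(k(\nabla u,\cdot)^{\sharp})=J(\nabla u)+\langle\nabla\mathcal{K},\nabla u\rangle+\langle k,\nabla^2 u\rangle$. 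Multiplying the latter by $v$ and adding, the $\langle\nabla\mathcal{K},\nabla u\rangle$ terms cancel, and the cross-term $\langle k,\nabla^2 u\rangle$ is absorbed into $|\bar{\nabla}^2 u|^2=|\nabla^2 u|^2+2v\langle k,\nabla^2 u\rangle+v^2|k|^2$. The surviving curvature pieces $R_g$ and $\Ric(\nu,\nu)$ are eliminated using the Hamiltonian constraint $|k|^2=R_g+\mathcal{K}^2-2\mu$ and the Gauss equation $2K=R_g-2\Ric(\nu,\nu)+H^2-|A|^2$ for the level set $\Sigma_t$. The final algebraic cancellation uses the mean curvature identity $H=-\mathcal{K}-\nu(v)/v$, obtained from $H=\div_g(\nabla u/v)$ together with the harmonic equation, and the orthogonal decomposition $|\nabla^2 u|^2=v^2|A|^2+2|\nabla v|^2-\nu(v)^2$, which follows from computing $\nabla^2 u$ in an adapted basis $\{e_1,e_2,\nu\}$ on $\Sigma_t$ and using $u|_{\Sigma_t}=\mathrm{const}$.

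To globalize, I would apply the divergence theorem to $\nabla v+k(\nabla u,\cdot)^{\sharp}$ on $\Omega_\delta=\{v>\delta\}$ for a regular value $\delta>0$ of $v$ (existing for a.e.\ $\delta$ by Sard). The boundary decomposes as $(\partial\Omega\cap\{v>\delta\})\cup(\{v=\delta\}\cap\Omega)$; the inner piece has outward normal $-\nabla v/|\nabla v|$ and contributes $-\int_{\{v=\delta\}}(|\nabla v|+k(\nabla u,\nabla v/|\nabla v|))dA$. The coarea formula then rewrites the bulk integral of the pointwise identity as the iterated integral $\int_{\underline u}^{\overline u}\int_{\Sigma_t}[\tfrac12|\bar{\nabla}^2 u|^2/v^2+\mu+J(\nu)-K]\,dA\,dt$ appearing on the RHS. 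Taking $\delta\to 0$ along a subsequence of regular values, the outer boundary term converges to $\int_{\partial_{\neq 0}\Omega}(\partial_\upsilon v+k(\nabla u,\upsilon))dA$, the $k$-part of the inner boundary vanishes because $|k(\nabla u,\nabla v/|\nabla v|)|\leq|k|\delta$ on $\{v=\delta\}$, and the nonpositive remainder $-\int_{\{v=\delta\}}|\nabla v|dA$ is simply discarded, upgrading the equality to the stated inequality.

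The main obstacle is justifying this limit in the presence of the critical set $\{v=0\}$, where $v$ is only Lipschitz and the integrand $|\bar{\nabla}^2 u|^2/v$ is a priori singular. Lemma \ref{kato} is decisive here: combined with Bochner and the harmonic equation, it yields $|\nabla v|\in L^2(\Omega)$, and then coarea gives $\int_0^\infty\int_{\{v=s\}}|\nabla v|\,dA\,ds=\int_\Omega|\nabla v|^2 dV<\infty$ as well as $\int_0^\infty\mathrm{Area}(\{v=s\})\,ds\le\int_\Omega|\nabla v|\,dV$, from which a subsequence $\delta_n\downarrow 0$ may be selected along which both $\int_{\{v=\delta_n\}}|\nabla v|\,dA$ stays controlled and $\delta_n\,\mathrm{Area}(\{v=\delta_n\})\to 0$. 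Without the refined Kato inequality one would lose $L^2$ control of $|\nabla v|$ across the critical set, and neither the coarea computation nor the internal boundary analysis would close.
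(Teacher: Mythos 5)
Your pointwise identity on $\{|\nabla u|>0\}$ is correct; indeed it is exactly the paper's inequality \eqref{jhg} at $\varepsilon=0$ with the $\langle k,\nabla^2u\rangle$ and $\langle\nabla u,\nabla\mathcal K\rangle$ terms repackaged as $\operatorname{div}(k(\nabla u,\cdot)^\sharp)$, and your derivation via Bochner, the constraints, the Gauss equation, the mean curvature relation $H=-\mathcal K-\nu(v)/v$, and the adapted-frame decomposition of $|\nabla^2u|^2$ all checks out. Your regularization scheme is genuinely different from the paper's: you exhaust by super-level sets $\Omega_\delta=\{v>\delta\}$ of the \emph{gradient magnitude}, whereas the paper keeps $\varphi_\varepsilon=\sqrt{v^2+\varepsilon}$ positive everywhere and splits the domain by removing a small open set $\mathcal A$ of critical \emph{values of} $u$. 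Bringing the $k$-divergence into the vector field $\nabla v + k(\nabla u,\cdot)^\sharp$ before applying the divergence theorem is also a cleaner presentation than the paper's separate integration by parts at the end. Both schemes accomplish the same thing, but yours replaces a level-sets-of-$u$ exhaustion with a level-sets-of-$v$ exhaustion.

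There is, however, a genuine gap in the limit passage, and you have attributed the role of Lemma~\ref{kato} to the wrong steps. The facts you cite as consequences of Kato are trivially true without it: since $u\in C^2(\overline\Omega)$ and $\overline\Omega$ is compact, $v=|\nabla u|$ is globally Lipschitz with $|\nabla v|\le|\nabla^2u|\le \sup_{\overline\Omega}|\nabla^2u|$, so $|\nabla v|\in L^\infty\subset L^2$ immediately, $\int_\Omega|\nabla v|\,dV<\infty$ immediately, every smooth level $\{v=\delta\}$ has finite area, and $\delta\,\mathcal H^2(\{v=\delta\})\to 0$ along a subsequence follows from integrability of $\mathcal H^2(\{v=s\})$ in $s$ alone. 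None of this uses the refinement $|\nabla^2u|^2\ge \tfrac54|\nabla v|^2-C v^2$. The step you do not justify is the one that actually needs Kato: the convergence
\begin{equation*}
\int_{\Omega_{\delta}}\bigl(\Delta v + \operatorname{div}(k(\nabla u,\cdot)^\sharp)\bigr)\,dV
\;\longrightarrow\;
\int_{\underline u}^{\overline u}\!\int_{\Sigma_t}\Bigl(\tfrac12\tfrac{|\bar\nabla^2u|^2}{v^2}+\mu+J(\nu)-K\Bigr)\,dA\,dt
\end{equation*}
as $\delta\to0$. The bulk integrand is not sign-definite, so neither monotone convergence nor Fatou applies directly, and a priori it behaves like $O(1/v)$ near the critical set, which need not be locally integrable. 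What rescues this is precisely Lemma~\ref{kato}: combined with Bochner it gives $v\,\Delta v=|\nabla^2u|^2-|\nabla v|^2+\operatorname{Ric}(\nabla u,\nabla u)+\langle\nabla u,\nabla\Delta u\rangle\ge -C\,v^2$, hence the \emph{uniform pointwise lower bound} $\Delta v+\operatorname{div}(k(\nabla u,\cdot)^\sharp)\ge -C'$ on $\{v>0\}$. Only with this bound can one split the integrand as a nonnegative part plus a bounded part and pass to the limit over $\Omega_\delta\nearrow\{v>0\}$. This is the exact analogue of the paper's use of Lemma~\ref{kato} to establish $\Delta\varphi_\varepsilon\ge -C_0|\nabla u|$ and thereby control $\int_{u^{-1}(\mathcal A)}\Delta\varphi_\varepsilon\,dV$. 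You should replace your justification for the final limit with this pointwise lower bound; as written, your proof does not close.
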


\begin{proof}
Recall Bochner's identity
\begin{equation}
\frac12\Delta|\nabla u|^2=|\nabla^2u|^2+\Ric(\nabla u,\nabla u)+\langle \nabla u,\nabla \Delta u\rangle.
\end{equation}
For $\varepsilon>0$ set $\varphi_\varepsilon=\sqrt{|\nabla u|^2+\varepsilon}$, and use Bochner's identity to find
\begin{align}\label{1}
\begin{split}
\Delta\varphi_\varepsilon=&\frac{\Delta|\nabla u|^2}{2\varphi_{\varepsilon}}-\frac{|\nabla|\nabla u|^2|^2}{4\varphi_{\varepsilon}^3}\\
\geq&\frac1{\varphi_\varepsilon}\left(|\nabla^2 u|^2-|\nabla |\nabla u||^2+\Ric(\nabla u,\nabla u)+\langle \nabla u,\nabla \Delta u\rangle\right).
\end{split}
\end{align}
On a regular level set $\Sigma$, the unit normal is $\nu=\tfrac{\nabla u}{|\nabla u|}$ and the second fundamental form is given by $II=\frac{\nabla^2_\Sigma u}{|\nabla u|}$,
where $\nabla^2_\Sigma u$ represents the Hessian of $u$ restricted to $T\Sigma\otimes T\Sigma$. We then have
\begin{equation}
|II|^2=|\nabla u|^{-2}\left(|\nabla^2u|^2-2|\nabla|\nabla u||^2+[\nabla^2u(\nu,\nu)]^2\right),
\end{equation}
and the mean curvature satisfies
\begin{equation}
|\nabla u| H=\Delta u - \nabla^2_{\nu}u.
\end{equation}
Furthermore by taking two traces of the Gauss equations
\begin{equation}
2\mathrm{Ric}(\nu,\nu)=R_g -R_{\Sigma}-|II|^2 +H^2,
\end{equation}
where $R_g$ and $R_{\Sigma}$ are scalar curvatures. Combining these formulas with \eqref{1} produces
\begin{align}\label{r1}
\begin{split}
\Delta \varphi_\varepsilon\geq&\frac1{\varphi_\varepsilon}\left(|\nabla^2u|^2
-|\nabla|\nabla u||^2+\langle\nabla u,\nabla\Delta u\rangle
+\frac{|\nabla u|^2}{2}\left(R_g -R_{\Sigma}
+H^2 -|II|^2\right)\right)\\
=&\frac1{2\varphi_\varepsilon}\left(|\nabla^2u|^2
+(R_g-R_\Sigma)|\nabla u|^2+2\langle \nabla u,\nabla\Delta u\rangle+(\Delta u)^2-2(\Delta u)\nabla_{\nu}^2 u\right).
\end{split}
\end{align}

Let us now replace the Hessian with the spacetime Hessian via the relation $\bar{\nabla}^2u =\nabla^2 u+k|\nabla u|$, and utilize the spacetime harmonic function equation $\Delta u=-\mathcal{K}|\nabla u|$ to find
\begin{align}
\begin{split}
\Delta\varphi_{\varepsilon}\geq &\frac{1}{2\varphi_{\varepsilon}}\left(
|\bar{\nabla}^2 u|^2 -2\langle k,\nabla^2 u\rangle|\nabla u|-|k|_g^2 |\nabla u|^2
+(R_g-R_{\Sigma})|\nabla u|^2\right.\\
&\left. -2\langle\nabla u,\nabla\mathcal{K}\rangle|\nabla u|-2\mathcal{K}\langle\nabla u,\nabla|\nabla u|\rangle
+\mathcal{K}^2|\nabla u|^2+2\mathcal{K}|\nabla u|\nabla_{\nu}^2 u\right).
\end{split}
\end{align}
Moreover noting that
\begin{equation}
\langle\nabla u,\nabla|\nabla u|\rangle=\frac{1}{2}\langle\nu,\nabla|\nabla u|^2\rangle =u^i\nabla_{i\nu}u=|\nabla u|\nabla_{\nu}^2 u,\quad\quad
2\mu=R_g+\mathcal{K}^2 -|k|_g^2,
\end{equation}
gives rise to the following inequality on a regular level set
\begin{equation}\label{jhg}
\Delta\varphi_{\varepsilon}\geq \frac{1}{2\varphi_{\varepsilon}}\left(
|\bar{\nabla}^2 u|^2+(2\mu-R_{\Sigma})|\nabla u|^2 -2\langle k,\nabla^2 u\rangle|\nabla u|
-2\langle\nabla u,\nabla\mathcal{K}\rangle|\nabla u|\right).
\end{equation}

Consider an open set $\mathcal{A}\subset [\underline{u},\overline{u}]$ containing the critical values of $u$, and let $\mathcal{B}\subset[\underline{u},\overline{u}]$ denote the complementary closed set. Then integrate by parts to obtain
\begin{equation}
\int_{\partial\Omega}\partial_\upsilon \varphi_{\varepsilon} dA=\int_{\Omega}\Delta \varphi_{\varepsilon} dV=\int_{u^{-1}(\mathcal{A})}\Delta \varphi_{\varepsilon} dV+\int_{u^{-1}(\mathcal{B})}\Delta \varphi_{\varepsilon} dV.
\end{equation}
According to Lemma \ref{kato} and \eqref{1} there is a positive constant $C_0$, depending only on $\mathrm{Ric}(g)$ along with $\mathcal{K}$ and its first derivatives, such that
\begin{equation}
\Delta \varphi_{\varepsilon}
\geq-C_0 |\nabla u|.
\end{equation}
An application of the coarea formula to $u:u^{-1}(\mathcal{A})\to\mathcal{A}$ then produces
\begin{equation}
-\int_{u^{-1}(\mathcal{A})}\Delta \varphi_{\varepsilon} dV\leq C_0\int_{u^{-1}(\mathcal{A})}|\nabla u|dV
=C_0 \int_{t\in\mathcal{A}}\mathcal{H}^2(\Sigma_t)dt,
\end{equation}
where $\mathcal{H}^2(\Sigma_t)$ is the 2-dimensional Hausdorff measure of the $t$-level set $\Sigma_t$. Next, apply the coarea formula to $u:u^{-1}(\mathcal{B})\to\mathcal{B}$ together with \eqref{jhg} to find
\begin{equation}
\int_{u^{-1}(\mathcal{B})}\Delta \varphi_{\varepsilon} dV
\geq\frac{1}{2}\int_{t\in\mathcal{B}}\int_{\Sigma_t}\frac{|\nabla u|}{\varphi_{\varepsilon}}\left[\frac{|\bar{\nabla}^2 u|^2}{|\nabla u|^2}+2\mu-R_{\Sigma_t}-\frac{2}{|\nabla u|}(
\langle k,\nabla^2 u\rangle
+\langle\nabla u,\nabla\mathcal{K}\rangle)\right]dAdt.
\end{equation}
Combining all this together produces
\begin{align}\label{ooo}
\begin{split}
\int_{\partial\Omega}\partial_\upsilon \varphi_{\varepsilon} dA
+C_0 \int_{t\in\mathcal{A}}\mathcal{H}^2(\Sigma_t)dt\geq&
\frac{1}{2}\int_{t\in\mathcal{B}}\int_{\Sigma_t}\frac{|\nabla u|}{\varphi_{\varepsilon}}\left(\frac{|\bar{\nabla}^2 u|^2}{|\nabla u|^2}+2\mu-R_{\Sigma_t}\right)dAdt\\
&-\int_{t\in\mathcal{B}}\int_{\Sigma_t}\varphi_{\varepsilon}^{-1}
\left(\langle k,\nabla^2 u\rangle
+\langle\nabla u,\nabla\mathcal{K}\rangle\right)dAdt.
\end{split}
\end{align}

On the set $u^{-1}(\mathcal{B})$, we have that $|\nabla u|$ is uniformly bounded from below. In addition, on $\partial_{\neq 0}\Omega$ it holds that
\begin{equation}
\partial_{\upsilon}\varphi_{\varepsilon}=\frac{|\nabla u|}{\varphi_{\varepsilon}}
\partial_{\upsilon}|\nabla u|\rightarrow \partial_{\upsilon}|\nabla u|\quad\text{ as }\quad\varepsilon\rightarrow 0.
\end{equation}
Therefore, the limit $\varepsilon\to 0$ may be taken in \eqref{ooo}, resulting in the same bulk expression except that $\varphi_{\varepsilon}$ is replaced by $|\nabla u|$, and with the boundary integral taken over the restricted set. Furthermore, by Sard's theorem (see Remark \ref{798} below) the measure $|\mathcal{A}|$ of $\mathcal{A}$ may be taken to be arbitrarily small. Since the map $t\mapsto \mathcal{H}^2(\Sigma_t)$ is integrable over $[\underline{u},\overline{u}]$ in light of the coarea formula, by then taking $|\mathcal{A}|\to 0$ we obtain
\begin{align}
\begin{split}
\int_{\partial_{\neq 0}\Omega}\partial_\upsilon |\nabla u|dA
\geq
\frac{1}{2}\int_{\underline{u}}^{\overline{u}}\int_{\Sigma_t}\left(\frac{|\bar{\nabla}^2 u|^2}{|\nabla u|^2}+2\mu-R_{\Sigma_t}\right)dAdt
-\int_{\Omega}
\left(\langle k,\nabla^2 u\rangle
+\langle\nabla u,\nabla\mathcal{K}\rangle\right)dV.
\end{split}
\end{align}
Lastly integrating parts
\begin{equation}
-\int_{\Omega}\langle k,\nabla^2 u\rangle dV
=-\int_{\Omega}k^{ij}\nabla_{ij} u dV
=\int_{\Omega} u^i\nabla^j k_{ij}-\int_{\partial \Omega}k(\nabla u,\upsilon)dA,
\end{equation}
and recalling that $J=\operatorname{div}_g(k-\mathcal{K}g)$ and $R_{\Sigma_t}=2K$, yields the desired result.
\end{proof}

\begin{remark}\label{798}
The classical statement of Sard's theorem in the current context requires $u\in C^{3}$, while spacetime harmonic functions typically only satisfy $u\in C^{2,\alpha}$, $0<\alpha<1$. Nevertheless, Sard's theorem still applies. To see this, observe that since $|\nabla u|$ is Lipschitz and hence in $L^p_{loc}$ for all $p$, elliptic regularity yields $u\in W^{2,p}_{loc}$. It follows from Kato's inequality that $|\nabla u|\in W^{1,p}_{loc}$, and therefore $u\in W^{3,p}_{loc}$. According to \cite[Theorem 5]{Figalli} the conclusion of Sard's theorem holds for such functions.
\end{remark}

\section{Existence and Uniqueness of Spacetime Harmonic Functions}
\label{sec3} \setcounter{equation}{0}
\setcounter{section}{4}

Let $(M,g,k)$ be an asymptotically flat initial data set with (possibly empty)
smooth boundary $\partial M$. The purpose of this section is to establish the
appropriate existence, uniqueness, and asymptotic properties of spacetime harmonic functions. As we will see, even though the spacetime harmonic function equation is nonlinear, the nonlinearity is sufficiently mild that it behaves similar to a linear equation without zeroth order term. For simplicity of discussion, it will be assumed here that $M$ possesses a single end, although the final result stated at the end of the section will be given in full generality. Let $a_i x^i$ be a linear function of the asymptotic coordinates in the end $M_{end}$, with $\sum_{i}a_i^2 =1$, and let $h\in C^{\infty}(\partial M)$. By slightly generalizing \cite[Theorem 3.1]{Bartnik} we may solve the asymptotically linear Dirichlet problem
\begin{equation}\label{vequation}
\Delta v=-\mathcal{K}\quad\quad\text{ on }\quad\quad M,
\end{equation}
\begin{equation}\label{vequationb}
v=0\quad\text{ on }\quad \partial M,\quad\quad\quad\quad v=a_i x^i +O_2(r^{1-q})\quad\text{ as }\quad r\rightarrow\infty,
\end{equation}
where $r=|x|$, $q$ is as in \eqref{asymflat}, and $O_2$ indicates in the usual way additional fall-off for each derivative taken up to order 2. Consider now the corresponding problem for the spacetime harmonic function equation
\begin{equation}\label{88}
\Delta u+\mathcal{K}|\nabla u|=0\quad\quad\text{ on }\quad\quad  M,
\end{equation}
\begin{equation}\label{881}
u=h\quad\text{ on }\quad \partial M,\quad\quad\quad\quad u=v +O_2(r^{-\beta})\quad\text{ as }\quad r\rightarrow\infty,
\end{equation}
where $\beta\in (0,1)$. The strategy will be to first solve for $u$ on a sequence of compact domains exhausting $M$, use a barrier in the asymptotic end to obtain uniform estimates, and then find a subsequence that converges to the desired solution.

\subsection{Solutions on compact exhausting domains}\label{mr}

Let $S_r\subset M_{end}$ be a coordinate sphere in the asymptotic end, and let
$M_r$ denote the compact component of $M\setminus S_r$ having boundary $\partial M_r=\partial M \cup S_r$. Consider now the preliminary Dirichlet problem
\begin{equation}\label{ty}
\Delta u^r+\mathcal{K}|\nabla u^r|=0\quad\quad\text{ on }\quad\quad  M_r,
\end{equation}
\begin{equation}\label{ty0}
u^r=h\quad\text{ on }\quad \partial M,\quad\quad\quad\quad u^r=v \quad\text{ on }\quad S_r.
\end{equation}
For this boundary value problem we will use
the Leray-Schauder fixed point theorem \cite[Theorem 11.3]{GilbargTrudinger}.

\begin{theorem}
Let $\mathcal{B}$ be a Banach space and $\mathcal{F}:\mathcal{B}\times [0,1]\rightarrow \mathcal{B}$ a compact mapping with
$\mathcal{F}(b,0)=0$ for all $b\in \mathcal{B}$. If there is a constant $c$, such that any solution $(b,\sigma)\in \mathcal{B}\times[0,1]$ of $b=\mathcal{F}(b,\sigma)$ satisfies the a priori inequality $\parallel b\parallel\leq c$, then there is a fixed point at $\sigma=1$. That is, there exists $b_1\in \mathcal{B}$ with $b_1=\mathcal{F}(b_1,1)$.
\end{theorem}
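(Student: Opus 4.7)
The plan is to establish this via Leray--Schauder degree theory, which extends the Brouwer degree to compact perturbations of the identity on a Banach space. First, I would invoke (or develop) the degree $\deg(I - \mathcal{F}(\cdot,\sigma),U,0)$, defined for bounded open $U\subset\mathcal{B}$ whenever $\mathcal{F}(\cdot,\sigma)$ is compact and $0\notin(I-\mathcal{F}(\cdot,\sigma))(\partial U)$. This integer-valued invariant satisfies three axioms that will drive the argument: normalization ($\deg(I,U,0)=1$ when $0\in U$), the solution property ($\deg\neq 0$ implies the equation $b=\mathcal{F}(b,\sigma)$ has a solution in $U$), and homotopy invariance under compact homotopies whose zero set stays away from $\partial U$.

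Next I would exploit the a priori estimate. Fix any $R>c$ and take $U=B_R\subset\mathcal{B}$, the open ball of radius $R$ centered at the origin. By hypothesis, every solution of $b=\mathcal{F}(b,\sigma)$ satisfies $\|b\|\leq c<R$, so
\begin{equation}
b-\mathcal{F}(b,\sigma)\neq 0 \quad\text{for every }b\in\partial B_R\text{ and every }\sigma\in[0,1].
\end{equation}
Hence the degree $\deg(I-\mathcal{F}(\cdot,\sigma),B_R,0)$ is well-defined throughout the full homotopy parameter range.

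The key step is then homotopy invariance: since $\mathcal{F}:\mathcal{B}\times[0,1]\to\mathcal{B}$ is compact as a joint map and the zero value is avoided on $\partial B_R$ uniformly in $\sigma$, the degree is constant in $\sigma$. Evaluating at $\sigma=0$ uses the assumption $\mathcal{F}(\cdot,0)\equiv 0$ to give $\deg(I-\mathcal{F}(\cdot,0),B_R,0)=\deg(I,B_R,0)=1$ by normalization. Propagating back to $\sigma=1$ yields $\deg(I-\mathcal{F}(\cdot,1),B_R,0)=1\neq 0$, and the solution property delivers the desired fixed point $b_1\in B_R$ with $b_1=\mathcal{F}(b_1,1)$.

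The main obstacle, or rather the substantial piece of machinery on which everything rests, is the construction of the Leray--Schauder degree for compact perturbations of the identity in an infinite-dimensional setting. This is carried out by approximating the compact map $\mathcal{F}(\cdot,\sigma)$ uniformly on bounded sets by finite-rank maps, applying the Brouwer degree on finite-dimensional subspaces containing the image, and verifying independence of both the approximation and the choice of subspace. Since this framework is developed in full detail in Chapter 11 of Gilbarg--Trudinger, the expected presentation in the paper is simply to cite the result rather than reprove the underlying degree theory.
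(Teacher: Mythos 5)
Your degree-theoretic argument is correct: the a priori bound keeps every fixed point of $b\mapsto\mathcal{F}(b,\sigma)$ inside $B_R$ for $R>c$, so the Leray--Schauder degree $\deg(I-\mathcal{F}(\cdot,\sigma),B_R,0)$ is well-defined and, by homotopy invariance of the degree under a compact homotopy avoiding $0$ on $\partial B_R$, constant in $\sigma$; evaluating at $\sigma=0$ with $\mathcal{F}(\cdot,0)\equiv 0$ gives degree $1$, so a solution at $\sigma=1$ exists. Be aware, however, that the paper supplies no proof at all here -- it simply records the statement and attributes it to \cite[Theorem 11.3]{GilbargTrudinger} -- so there is no ``paper's own argument'' to compare against, and you correctly anticipated this. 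One small inaccuracy in your closing remark: Gilbarg--Trudinger's Chapter 11 does not develop the Leray--Schauder degree. Their proof of Theorem 11.3 proceeds instead from Schauder's fixed point theorem (their Corollary 11.2), roughly by modifying $\mathcal{F}(\cdot,1)$ with a radial cut-off so that it maps a closed ball into itself, applying Schauder to get a fixed point of the modified map, and then using the a priori bound to show that this fixed point is in fact unmodified, hence a fixed point of $\mathcal{F}(\cdot,1)$. Your degree-theoretic route is a well-known alternative (found, e.g., in Deimling or Zeidler), and one could argue it is conceptually cleaner since the homotopy hypothesis $\mathcal{F}(\cdot,0)\equiv 0$ is used exactly where degree theory expects it, but it trades the relatively elementary Schauder theorem for the heavier machinery of infinite-dimensional degree.
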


To set up the fixed point method write $u^r=\tilde{v}+w^r$ and $f=\Delta \tilde{v}+\mathcal{K}|\nabla \tilde{v}|$, where $\tilde{v}=v+v_0$ with $v_0\in C^{\infty}(M)$ a fixed function satisfying $v_0 =h$ on $\partial M$ and $v_0\equiv 0$ on $M_{end}$. Then boundary value problem \eqref{ty}, \eqref{ty0} becomes
\begin{equation}\label{ty1}
\Delta w^r=-\mathcal{K}\left(|\nabla u^r|-|\nabla \tilde{v}|\right)-f
=-\mathcal{K}\left(\frac{\nabla(w^r+2\tilde{v})}{|\nabla(w^r+\tilde{v})|+|\nabla \tilde{v}|}\right)\cdot\nabla w^r-f\quad\text{ on }\quad M_r,
\end{equation}
\begin{equation}\label{ty1b}
w^r=0 \quad\text{ on }\quad\partial M_r.
\end{equation}
Let $C^{2,\alpha}_0(M_r)$ denote the space of $C^{2,\alpha}(M_r)$ functions which
vanish on the boundary, and observe that $\Delta^{-1}:C^{2,\alpha}_0(M_r)\rightarrow C^{0,\alpha}(M_r)$ is an isomorphism. Now set
\begin{equation}\label{ty2}
\mathcal{F}(w,\sigma)=\sigma\Delta^{-1}\left[-\mathcal{K}
\left(\frac{\nabla(w+2\tilde{v})}{|\nabla(w+\tilde{v})|+|\nabla \tilde{v}|}\right)\cdot\nabla w-f\right]=:\sigma \Delta^{-1} F(w),\quad\quad w\in C^{1,\alpha}_0(M_r).
\end{equation}
Observe that $F(w)\in C^{0,\alpha}(M_r)$ and hence $\mathcal{F}(w,\sigma)\in C^{2,\alpha}_0(M_r)$. We choose $\mathcal{B}=C^{1,\alpha}_0(M_r)$ and note that
the composition
\begin{equation}
C^{1,\alpha}_0(M_r)\mathop{\longrightarrow}^{F}C^{0,\alpha}(M_r)
\mathop{\longrightarrow}^{\Delta^{-1}} C^{2,\alpha}_0(M_r)
\mathop{\longrightarrow}^{\iota} C^{1,\alpha}_0(M_r),
\end{equation}
yields a compact map $\mathcal{F}:\mathcal{B}\times [0,1]\rightarrow\mathcal{B}$ since the first two pieces $F$ and $\Delta^{-1}$ are bounded while the inclusion $\iota$ is compact. Furthermore, finding a fixed point
$w^r=\mathcal{F}(w^r,1)$ is equivalent to solving \eqref{ty1}, \eqref{ty1b} in $C^{2,\alpha}_0(M_r)$ by elliptic regularity. Then $u^r=\tilde{v}+w^r$ is the desired solution of \eqref{ty}, \eqref{ty0}.

It remains to establish the a priori estimate $|w_{\sigma}|_{C^{1,\alpha}(M_r)}\leq c$, independent of $\sigma$, for a fixed point $w_{\sigma} =\mathcal{F}(w_{\sigma}, \sigma)$. Such a fixed point satisfies \eqref{ty1}, \eqref{ty1b} with $\mathcal{K}$ and $f$ replaced by $\sigma\mathcal{K}$ and $\sigma f$. This may be viewed as a linear equation with coefficients that depend on the solution. However, since the coefficients remain uniformly bounded independent of the solution, $L^p$ estimates for linear elliptic equations may be applied to obtain
\begin{equation}
\parallel w_{\sigma}\parallel_{W^{2,p}(M_r)}\leq C\left(\parallel f\parallel_{L^p(M_r)}+\parallel w_\sigma\parallel_{L^{p}(M_r)}\right),
\end{equation}
where $W^{l,p}$ denotes the Sobolev space with $l$ weak derivatives in $L^p$, $p>1$.
Moreover, since the coefficient of the zeroth order term in \eqref{ty1} vanishes, the maximum principle is valid and leads to a $C^0$ bound for $w_\sigma$ which in turn gives a bound for $\parallel w_\sigma\parallel_{L^p(M_r)}$. Hence we obtain the a priori estimate
\begin{equation}\label{p0}
\parallel w_\sigma\parallel_{W^{2,p}(M_r)}\leq C_3,
\end{equation}
independent of $\sigma$.
According to the Sobolev embedding $W^{2,p}(M_r)\hookrightarrow C^{1,\alpha}(M_r)$ for $p$ sufficiently large, we obtain
\begin{equation}
|w_\sigma|_{C^{1,\alpha}(M_r)}\leq C_5,
\end{equation}
independent of $\sigma$. The Leray-Schauder theorem may now be applied to obtain a fixed point at $\sigma=1$.

\subsection{Barriers}\label{barrier}

Rotationally symmetric asymptotic barrier functions will be constructed to obtain uniform bounds on the solutions $w^r$ of \eqref{ty1}, \eqref{ty1b} independent of $r$. To this end, in the asymptotically flat region set
\begin{equation}
\overline{w}(r)=\lambda r^{-\beta},\quad\quad \overline{w}'(r)=-\lambda\beta r^{-1-\beta},\quad\quad
\overline{w}''(r)=\lambda\beta(1+\beta)r^{-2-\beta},
\end{equation}
for $\beta\in(0,1)$ and where $\lambda>0$ is a constant to be chosen sufficiently large.
Using the level sets of $r$, the metric may be expressed as
\begin{equation}
g=|\partial_r|^2 dr^2 +g_r=|\nabla r|^{-2}dr^2+g_r,
\end{equation}
where $g_r$ is the induced metric on the coordinate spheres $S_r$.
If $\upsilon$ denotes the unit outer normal to the coordinate spheres then
\begin{equation}
\upsilon=|\partial_r|^{-1}\partial_r=|\nabla r|\partial_r,\quad\quad\quad
|\nabla r|^2=g^{ij}\partial_i r\partial_j r=\frac{g^{ij}x^i x^j}{r^2}
=1+O_2(r^{-q}),
\end{equation}
and the Laplacian becomes
\begin{equation}
\Delta\overline{w}=\nabla_{\upsilon}^2 \overline{w}+H_{S_r}\upsilon(\overline{w}),
\end{equation}
where $H_{S_r}$ denotes mean curvature. Observe that
\begin{equation}
\nabla_{\upsilon}^2 \overline{w}=\upsilon^i \upsilon^j \nabla_{ij}\overline{w}
=|\nabla r|^2 \nabla_{rr}\overline{w}
=|\nabla r|^2 \left(\overline{w}''-\Gamma_{rr}^r \overline{w}'\right),
\end{equation}
and
\begin{equation}
\Gamma_{rr}^r
=\frac{1}{2}g^{rr}\partial_{r}g_{rr}=-\partial_{r}\log|\nabla r|,
\end{equation}
so that
\begin{align}
\begin{split}
\Delta\overline{w}=&|\nabla r|^2 \overline{w}''
+|\nabla r|\left(H_{S_r}+\partial_r|\nabla r|\right)\overline{w}'\\
=&\left(1+O(r^{-q})\right)\left(\overline{w}''
+\frac{2}{r}\overline{w}'+O(r^{-q-1})\overline{w}'\right)\\
=&-\lambda\beta(1-\beta)r^{-2-\beta}\left(1+O(r^{-q})\right).
\end{split}
\end{align}
Furthermore
\begin{equation}
\left|\overline{\mathcal{K}}(w^r)\cdot\nabla\overline{w}\right|
:=\left|\mathcal{K}\left(\frac{\nabla(w^r+2\tilde{v})}
{|\nabla(w^r+\tilde{v})|+|\nabla \tilde{v}|}\right)\cdot\nabla\overline{w}\right|\leq Cr^{-q-1}|\overline{w}'|
= C\lambda\beta r^{-q-2-\beta}.
\end{equation}
It follows that
\begin{equation}
L\overline{w}:=\Delta\overline{w}
+\overline{\mathcal{K}}(w^r)\cdot\nabla\overline{w}
=-\lambda\beta(1-\beta)r^{-2-\beta}\left(1+O(r^{-q})\right).
\end{equation}

Consider now the asymptotics for $f$. According to \eqref{vequation}, \eqref{vequationb} we have
\begin{equation}
|f|=\left|\Delta v+\mathcal{K}|\nabla v|\right|=|\mathcal{K}||1-|\nabla v||\leq
C_1 r^{-2q-1}=C_1 r^{-2-\beta},
\end{equation}
by setting $\beta=2q-1>0$. Therefore, given a large radius $r_0$, it holds that
\begin{equation}
L\overline{w}\leq -f \quad\quad\text{ for }\quad\quad r>r_0
\end{equation}
if $\lambda$ is sufficiently large. Hence, $\overline{w}$ is a super-solution
of \eqref{ty1} on $M_r \setminus M_{r_0}$.

In order to obtain a global barrier let $\tilde{w}^{r_0}$ solve \eqref{ty1}, \eqref{ty1b} on $M_{r_0}$ with $\overline{\mathcal{K}}(\tilde{w}^{r_0})$ replaced by $\overline{\mathcal{K}}(w^{r})$, noting that this is a linear boundary value problem. Next define
\begin{equation}
\hat{w}_{\lambda}=
\begin{cases}
\tilde{w}:=\tilde{w}^{r_0}+\lambda r_0^{-\beta} & \text{on}\quad M_{r_0}, \\
\overline{w} & \text{on}\quad M_r \setminus M_{r_0}.
\end{cases}
\end{equation}
This function is smooth everywhere, except at $S_{r_0}$ where it is continuous, and is a super-solution for \eqref{ty1} on $M_{r_0}$ and $M_r \setminus M_{r_0}$ separately. Furthermore we have
\begin{equation}
\partial_r \tilde{w}> \partial_r \overline{w}\quad\quad\text{ at }\quad\quad S_{r_0},
\end{equation}
if $\lambda$ is sufficiently large (independent of $r$), and this allows for an application of the weak maximum principle. To see this, let $\phi\in C^{\infty}_{c}(M_r)$ be a nonnegative test function and observe that
\begin{align}
\begin{split}
0=&-\int_{M_{r_0}}\phi L(\tilde{w}-w^r)dV\\
=&\int_{M_{r_0}}\left(\nabla\phi\cdot\nabla(\tilde{w}-w^r)
-\phi\overline{\mathcal{K}}(w^r)\cdot\nabla(\tilde{w}-w^r)\right)dV
-\int_{S_{r_0}}\phi\partial_{r}(\tilde{w}-w^r)dA,
\end{split}
\end{align}
and
\begin{align}
\begin{split}
0\leq&-\int_{M_{r}\setminus M_{r_0}}\phi L(\overline{w}-w^r)dV\\
=&\int_{M_{r}\setminus M_{r_0}}\left(\nabla\phi\cdot\nabla(\overline{w}-w^r)
-\phi\overline{\mathcal{K}}(w^r)\cdot\nabla(\overline{w}-w^r)\right)dV
+\int_{S_{r_0}}\phi\partial_{r}(\overline{w}-w^r)dA,
\end{split}
\end{align}
so that upon adding these two inequalities
\begin{equation}
\int_{M_r}\left(\nabla\phi\cdot\nabla(\hat{w}_{\lambda}-w^r)
-\phi\overline{\mathcal{K}}(w^r)\cdot\nabla(\hat{w}_{\lambda}-w^r)\right)dV
\geq\int_{S_{r_0}}\phi(\partial_{r}\tilde{w}
-\partial_{r}\overline{w})dA\geq 0.
\end{equation}
Hence, according to \cite[Theorem 8.1]{GilbargTrudinger} the weak maximum principle yields
\begin{equation}
\inf_{M_r}(\hat{w}_{\lambda}-w^r)\geq
\inf_{\partial M_r}(\hat{w}_{\lambda}-w^r)\geq 0.
\end{equation}
A similar argument with $\hat{w}_{-\lambda}$ yields a lower bound, and therefore
\begin{equation}
\hat{w}_{-\lambda}< w^r <\hat{w}_{\lambda}\quad\text{ on }\quad M_{r}.
\end{equation}
Consequently we obtain a global $C^0$ estimate for $w^r$ independent of $r$.

\subsection{The global existence result}

Here we will show that $w^r$ subconverges on compact subsets as $r\rightarrow\infty$ to a $C^{2,\alpha}$ solution of \eqref{ty1} on all of $M$. In the previous subsection a uniform $C^0$ estimate was achieved. Consider
\eqref{ty1} as a linear equation with coefficients depending on $w^r$ but which are uniformly bounded, and apply the local $L^p$ estimates to find
\begin{equation}
\parallel w^r \parallel_{W^{2,p}(\Omega)}\leq C\left(\parallel f\parallel_{L^{p}(\Omega')}+\parallel w^r\parallel_{L^p(\Omega')}\right),
\end{equation}
where $\Omega\subset\subset\Omega'$ are any fixed compact subsets of $M_r$ and $C$ is independent of $r$. The uniform $C^0$ bound implies a uniform $L^p$ bound in $\Omega'$, and therefore
\begin{equation}
\parallel w^r \parallel_{W^{2,p}(\Omega)}\leq C'.
\end{equation}
By Sobolev embedding this yields a uniform $C^{1,\alpha}(\Omega)$ bound, so that in particular the right-hand side of \eqref{ty1} is controlled in $C^{0,\alpha}(\Omega)$. Now applying the local Schauder estimates we obtain the desired $C^{2,\alpha}$ estimate on a subset of $\Omega$, for any $\alpha\in(0,1)$. It follows then by a diagonal argument that there is a subsequence $w^{r_i}$ converging in $C^{2,\alpha}$ on any compact subset of $M$ to a smooth function $w$ which solves
\begin{equation}\label{ty10}
\Delta w=-\mathcal{K}\left(\frac{\nabla(w+2\tilde{v})}{|\nabla(w+\tilde{v})|+|\nabla \tilde{v}|}\right)\cdot\nabla w-f\quad\text{ on }\quad M,
\end{equation}
\begin{equation}
w=0\quad\text{ on }\quad \partial M,\quad\quad\quad
-\overline{w}<w<\overline{w} \quad\text{ on }\quad M\setminus M_{r_0}.
\end{equation}
Finally, by setting $u=\tilde{v}+w$ we obtain the desired solution of \eqref{88}, \eqref{881}.

As mentioned at the start of the section, this global existence result extends in a
straightforward manner to the case of multiple asymptotically flat ends $M_{end}^\ell$, $\ell=1,\ldots,\ell_0$. For this situation
let $a_i^{\ell} x^i$ be a linear function of the asymptotic coordinates in the end $M_{end}^{\ell}$, with $\sum_i (a_i^\ell)^2=1$, and let $h\in C^{\infty}(\partial M)$. Then the background function satisfies
\begin{equation}\label{vequation1}
\Delta v=-\mathcal{K}\quad\quad\text{ on }\quad\quad M,
\end{equation}
\begin{equation}\label{vequationb1}
v=0\quad\text{ on }\quad \partial M,\quad\quad v=a_i^{\ell} x^i +O_2(r^{1-q})\quad\text{ as }\quad r\rightarrow\infty\quad\text{ in }\quad M_{end}^{\ell},\text{ }\text{ }\ell=1,\ldots,\ell_0.
\end{equation}

\begin{theorem}\label{Thm:PDE}
Suppose that $(M,g,k)$ is a smooth asymptotically flat initial data set with (possibly empty) boundary $\partial M$, and $h\in C^{\infty}(\partial M)$.
Let $v$ be a solution of \eqref{vequation1} and \eqref{vequationb1}. Then for each $\alpha\in(0,1)$ there exists a solution $u\in C^{2,\alpha}(M)$ of the spacetime harmonic function equation
\begin{equation}\label{1231}
\Delta u+\mathcal{K}|\nabla u|=0\quad\quad\text{ on }\quad\quad M,
\end{equation}
such that
\begin{equation}\label{12341}
u=h\quad\text{ on }\quad \partial M,\quad\quad\quad\quad u=v +O_2(r^{1-2q})\quad\text{ as }\quad r\rightarrow\infty.
\end{equation}
The solution $u$ is unique among those which satisfy \eqref{12341}.
\end{theorem}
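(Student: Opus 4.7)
The plan is to assemble the outputs of Sections 4.1--4.3 for existence, and then close out with a short weak-maximum-principle argument for uniqueness.

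For existence, I would first invoke the generalization of \cite[Theorem 3.1]{Bartnik} to solve the inhomogeneous linear problem \eqref{vequation1}--\eqref{vequationb1}, obtaining the background $v$ with $v = a^{\ell}_i x^i + O_2(r^{1-q})$ in each end. Fix a compactly supported smooth extension $v_0$ of $h$, set $\tilde v = v + v_0$, and seek $u = \tilde v + w$; then \eqref{1231}--\eqref{12341} is equivalent to \eqref{ty10} for $w$ with zero Dirichlet data on $\partial M$ and decay $O_2(r^{-\beta})$ at infinity, where $\beta = 2q-1 \in (0,1)$. On the exhausting domains $M_r$, Section 4.1 produces $w^r \in C^{2,\alpha}_0(M_r)$ via Leray--Schauder: the required a priori bound for a fixed point holds because \eqref{ty1}, viewed as a linear equation with coefficients depending on the solution, has uniformly bounded coefficients and vanishing zeroth order term, so the maximum principle provides a $C^0$ bound which bootstraps via interior $L^p$ theory to $W^{2,p}$ and hence to $C^{1,\alpha}$. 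Section 4.2 constructs the radial supersolution $\overline w = \lambda r^{-\beta}$ and its interior splice $\hat w_{\pm\lambda}$; a weak maximum principle comparison yields the uniform estimate $|w^r| \leq \lambda r^{-\beta}$ outside a fixed ball, independent of $r$. In Section 4.3, interior $L^p$ and Schauder estimates combined with this uniform $C^0$ control produce local $C^{2,\alpha}$ bounds, and a diagonal extraction gives a subsequence $w^{r_i} \to w$ in $C^{2,\alpha}_{\mathrm{loc}}$ solving \eqref{ty10} globally with $w = O_2(r^{1-2q})$. Setting $u = \tilde v + w$ completes existence.

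For uniqueness, let $u_1, u_2$ both satisfy \eqref{1231}--\eqref{12341} and set $w = u_1 - u_2$, so $w$ vanishes on $\partial M$ and is $O_2(r^{1-2q})$ at infinity. Using the identity
\begin{equation*}
|\nabla u_1| - |\nabla u_2| = \frac{\nabla(u_1+u_2)\cdot \nabla w}{|\nabla u_1|+|\nabla u_2|}
\end{equation*}
(interpreted as zero where the denominator vanishes, which forces $\nabla w = 0$ there as well), subtracting the two spacetime harmonic equations yields
\begin{equation*}
\Delta w + b \cdot \nabla w = 0, \qquad |b| \leq |\mathcal K|,
\end{equation*}
a linear elliptic equation with bounded coefficients and no zeroth order term. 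The weak maximum principle \cite[Theorem 8.1]{GilbargTrudinger} applied on $M_r$ gives $\sup_{M_r}|w| \leq \sup_{S_r}|w|$, and the right side tends to $0$ as $r \to \infty$; hence $w \equiv 0$.

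The principal obstacle is ensuring that, after quasilinearization, the coefficient of $\nabla w^r$ in \eqref{ty1}---namely $\mathcal K$ times the ratio $\nabla(w^r + 2\tilde v)/(|\nabla(w^r+\tilde v)| + |\nabla\tilde v|)$---stays bounded \emph{independently of the fixed point} $w^r$. This boundedness is precisely what renders the Leray--Schauder map compact with a uniform a priori bound, and later allows the linearized $L^p$ and Schauder estimates to close with constants independent of the exhaustion. The other delicate calibration is the choice $\beta = 2q-1$: the barrier satisfies $L\overline w \sim -\lambda\beta(1-\beta)r^{-2-\beta}$, which must dominate the inhomogeneity $|f| = O(r^{-2q-1})$ coming from the fact that $|\nabla v|$ approaches $1$ only at rate $r^{-q}$, and this exponent is the unique one for which the comparison works.
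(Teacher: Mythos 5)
Your proposal matches the paper's proof of Theorem \ref{Thm:PDE} nearly step for step: decomposing $u = \tilde v + w$ with $\tilde v = v + v_0$, quasilinearizing using the difference-of-gradients identity, solving on the exhaustion $M_r$ by Leray--Schauder with the maximum principle and $L^p$ estimates for the a priori bound, building the radial barriers $\hat w_{\pm\lambda}$ to get uniform $C^0$ control with $\beta = 2q-1$, and extracting a $C^{2,\alpha}_{\mathrm{loc}}$ limit. Your uniqueness argument is also the intended one --- subtracting the two equations and using the bounded-coefficient identity to reduce to a linear elliptic inequality with no zeroth-order term, then comparing on the exhaustion --- and in fact is more explicit than the paper's one-line remark.

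There is one point you have elided. After the diagonal extraction you assert $w = O_2(r^{1-2q})$, but the barrier argument only controls $|w|$, giving $w = O(r^{1-2q})$ pointwise; and the interior Schauder bounds give $C^{2,\alpha}$ control on compact sets, not \emph{decay} of the first and second derivatives at infinity. Since \eqref{12341} requires $u = v + O_2(r^{1-2q})$ (so that the ADM quantities computed in spacetime harmonic coordinates agree with those in the original coordinates), you need a further weighted-estimate argument in the asymptotic region: apply scaled Schauder estimates on annuli $\{R \leq r \leq 2R\}$ using the $C^0$ decay to get $|\nabla w| = O(r^{-2q})$ and $|\nabla^2 w| = O(r^{-1-2q})$. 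The paper delegates this to the analogue of \cite[Proposition 3]{SchoenYauII}; your write-up should at least acknowledge the step rather than claim it comes for free from the barrier.
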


\begin{proof}
The existence portion was proven in the discussion above, while the uniqueness follows from the maximum principle in the same manner as the barrier argument at the end of Section \ref{barrier}. Lastly, the decay of derivatives in the asymptotic ends may be established analogously to \cite[Proposition 3]{SchoenYauII}.
\end{proof}

\section{Controlling the Level Set Topology}
\label{sec4} \setcounter{equation}{0}
\setcounter{section}{5}

In order to apply the integral inequality Proposition \ref{SpacetimeSternCompact} successfully, it is important to ensure that the Euler characteristic of regular level sets for the spacetime harmonic function does not exceed 1. In this section, we show that it is possible to choose the spacetime harmonic function, by carefully selecting its Dirichlet data, to achieve this goal for the level sets. Since this will be employed for generalized exterior regions, here we consider asymptotically flat initial data $(M,g,k)$ with a single asymptotic end, although the boundary may have several components $\partial M=\cup_{i=1}^{n}\partial_i M$. Let $v$ solve \eqref{vequation1}, \eqref{vequationb1} and consider the Dirichlet problem
\begin{equation}\label{sharmonictop}
\Delta u_{\mathbf{c}}+\mathcal{K}|\nabla u_{\mathbf{c}}|=0\quad\quad\text{ on }\quad\quad M,
\end{equation}
\begin{equation}\label{sharmonictopb}
u_{\mathbf{c}}=c_i\quad\text{ on }\quad \partial_i M,\text{ }\text{ }i=1,\ldots,n,\quad\quad\quad\quad u_{\mathbf{c}}=v +O_2(r^{1-2q})\quad\text{ as }\quad r\rightarrow\infty,
\end{equation}
where $\mathbf{c}=(c_1,\ldots,c_n)$ are constants. The following is a technical preliminary result that indicates how to choose the constants $\mathbf{c}$ in order to achieve the main topological conclusions of Theorem \ref{shfexistence} concerning level sets, as well as to aid with the computation of boundary terms in the integral inequality Proposition \ref{SpacetimeSternCompact}.

\begin{lemma}\label{l:dirichlet}
Let $u_{\mathbf{c}}$ be the solution of \eqref{sharmonictop}, \eqref{sharmonictopb} given by Theorem \ref{Thm:PDE}.

\begin{enumerate}
\item Let $(-1)^{\varsigma_i}$, $\varsigma_i\in\{0,1\}$ be a choice of sign associated with each boundary component $i=1,\ldots,n$. There exists a set of constants $\mathbf{c}$ such that for each boundary component there is a point $y_i\in\partial_i M$ with $|\nabla u_{\mathbf{c}}(y_i)|=0$, and in addition $(-1)^{\varsigma_i}\partial_{\upsilon}u_{\mathbf{c}}\geq 0$ on $\partial_i M$, where $\upsilon$ is the unit normal to $\partial M$ pointing outside $M$.

\item If $v\neq 0$, then within each boundary component $\partial_i M$, the set of points at which $|\nabla u_{\mathbf{c}}|=0$ is nowhere dense.
\end{enumerate}
\end{lemma}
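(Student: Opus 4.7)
The plan is to handle parts (1) and (2) separately, using a continuity/degree argument for (1) and a boundary unique continuation argument for (2).

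For part (1), the key step is to establish monotonicity of the outward normal derivative $\partial_{\upsilon} u_{\mathbf{c}}|_{\partial_i M}$ as a function of $\mathbf{c}$. Writing the difference $\phi=u_{\mathbf{c}'}-u_{\mathbf{c}}$ of two solutions and using the pointwise identity
\[|\nabla u_{\mathbf{c}'}|-|\nabla u_{\mathbf{c}}|=\frac{\nabla(u_{\mathbf{c}'}+u_{\mathbf{c}})\cdot\nabla\phi}{|\nabla u_{\mathbf{c}'}|+|\nabla u_{\mathbf{c}}|}\]
(interpreted via the inequality $|\Delta\phi|\leq|\mathcal{K}||\nabla\phi|$ at exceptional points), the strong maximum principle and the Hopf lemma apply to $\phi$. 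Taking $\mathbf{c}'=\mathbf{c}+\delta e_j$ then yields that $\partial_{\upsilon}u_{\mathbf{c}}|_{\partial_i M}$ is strictly increasing in $c_i$ and strictly decreasing in $c_j$ for $j\neq i$. A quantitative version of Hopf, combined with a barrier construction as in Section \ref{barrier} applied to $u_{\mathbf{c}}-c_i$, further shows $\partial_{\upsilon}u_{\mathbf{c}}|_{\partial_i M}\to\pm\infty$ uniformly as $c_i\to\pm\infty$, with the other components lying in a bounded set. Setting $F_i(\mathbf{c}):=\inf_{\partial_i M}(-1)^{\varsigma_i}\partial_{\upsilon}u_{\mathbf{c}}$, the map $F:\mathbb{R}^n\to\mathbb{R}^n$ is continuous and satisfies the hypotheses of the Poincar\'e--Miranda theorem on a sufficiently large box (after reflecting those coordinates with $\varsigma_i=1$). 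A zero $\mathbf{c}^*$ of $F$ then gives $(-1)^{\varsigma_i}\partial_{\upsilon}u_{\mathbf{c}^*}\geq 0$ on each $\partial_i M$ with equality attained at some $y_i$; since $u_{\mathbf{c}^*}$ is constant on $\partial_i M$, the tangential derivatives vanish there, so $|\nabla u_{\mathbf{c}^*}(y_i)|=|\partial_{\upsilon}u_{\mathbf{c}^*}(y_i)|=0$.

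For part (2), suppose for contradiction that $|\nabla u_{\mathbf{c}}|$ vanishes on a nonempty relatively open subset $U\subset\partial_i M$. Then $\tilde{u}:=u_{\mathbf{c}}-c_i$ has vanishing Dirichlet and Neumann data on $U$ and satisfies $|\Delta\tilde{u}|\leq|\mathcal{K}||\nabla\tilde{u}|$. Aronszajn's boundary unique continuation theorem then forces $\tilde{u}\equiv 0$, so $u_{\mathbf{c}}$ is constant on $M$. The asymptotic expansion $u_{\mathbf{c}}=v+O_2(r^{1-2q})$ then forces $v$ to approach a constant at infinity in each end, contradicting either the linear growth $v\sim a_i^\ell x^i$ with $|\vec{a}^\ell|=1$ from \eqref{vequationb1}, or in the absence of such growth the hypothesis $v\neq 0$ combined with the uniqueness of solutions to the Dirichlet problem \eqref{vequation1}--\eqref{vequationb1}.

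The main obstacle lies in the multi-parameter fixed point argument for part (1): while the Poincar\'e--Miranda theorem is the natural tool, the required uniform bounds on $\partial_{\upsilon}u_{\mathbf{c}}$ as $c_i\to\pm\infty$ (uniformly in $\mathbf{c}_{-i}$) demand a careful adaptation of the barrier construction of Section \ref{barrier}. For part (2), the reduction to a standard boundary unique continuation theorem is straightforward once one rewrites the nonlinearity $\mathcal{K}|\nabla u|$ as a bounded coefficient appearing in the linearized differential inequality.
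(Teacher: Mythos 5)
Your Part (2) argument follows the paper's almost verbatim: rewrite the nonlinearity as a bounded-coefficient linear term and invoke boundary unique continuation to conclude $u_{\mathbf{c}}$ is constant. The paper cites \cite{TaoZhang}, which is specifically a boundary unique continuation theorem under zero Neumann data; ``Aronszajn's theorem'' in the standard literature is an \emph{interior} strong unique continuation statement, so the reference should be adjusted, but the idea is correct.

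Part (1) is where you genuinely diverge from the paper, and where a real gap appears. The paper does not attempt a global degree-type argument. Instead it differentiates $u_{\mathbf{c}}$ in $\mathbf{c}$, forms the matrix $U_{ij}=\partial_\upsilon(\partial_{c_j}u_{\mathbf{c}})(y_i)$, proves $U$ is invertible via the maximum principle and Hopf lemma, establishes a \emph{uniform} bound on $U^{-1}$ by a compactness argument (the linearized equations have first-order coefficients of magnitude $|\mathcal{K}|$ independent of $\mathbf{c}$, so the $W^{2,p}_{loc}$ bounds on $\partial_{c_j}u_{\mathbf{c}}$ are $\mathbf{c}$-independent), and then solves an ODE $\mathbf{c}'(t)=U^{-1}\mathbf{p}'(t)$ to drive $\mathfrak{U}(\mathbf{c}(t))$ from its initial value to $0$. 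The delicate part is showing $t\mapsto\mathfrak{U}(\mathbf{c}(t))$ is differentiable despite the moving min/max points $y_i(\mathbf{c})$, which the paper handles with one-sided difference quotients. This is an essentially \emph{local} argument along a curve and never needs control of $\partial_\upsilon u_{\mathbf{c}}$ for simultaneously large $\mathbf{c}$.

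Your Poincar\'e--Miranda route requires exactly such simultaneous control, and this is where I see the gap. You state the blow-up ``$\partial_\upsilon u_{\mathbf{c}}|_{\partial_i M}\to\pm\infty$ uniformly as $c_i\to\pm\infty$, with the other components lying in a bounded set.'' But to verify the Miranda condition on the face $c_i=L$ of the box $[-L,L]^n$, you need $F_i(\mathbf{c})\geq 0$ for \emph{all} $(c_j)_{j\neq i}\in[-L,L]^{n-1}$, a set whose diameter grows with $L$. By your own monotonicity, $F_i$ \emph{decreases} when $c_j$ increases for $j\neq i$, so the worst corner is precisely where the other components are at the far end of the (unbounded) box. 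An estimate uniform only over a fixed bounded set of other components does not reach that corner. Moreover, once mixed signs $\varsigma_i\in\{0,1\}$ are present, the post-reflection monotonicity pattern is no longer ``diagonal increasing, off-diagonal decreasing,'' which further complicates pinpointing the worst corner. To close this you would need something like a uniform lower bound on $\sum_j \partial_\upsilon(\partial_{c_j}u_{\mathbf{c}})$ along boundary directions --- which is essentially the same compactness input the paper uses for $U^{-1}$ --- so the degree argument does not actually buy you a shortcut around that technical step. As written, the Miranda hypotheses are not established.
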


\begin{proof}
A priori estimates established in the previous section show that $u_{\mathbf{c}}$ is continuously differentiable in $\mathbf{c}$. Set $u_i:=\partial_{c_i}u_{\mathbf{c}}$ and observe that these functions satisfy
\begin{equation}\label{l,m}
\Delta u_i
+\mathcal{K}\frac{\nabla u_{\mathbf{c}}}{|\nabla u_{\mathbf{c}}|}\cdot\nabla u_i=0\quad\quad\text{ on }\quad\quad M,
\end{equation}
\begin{equation}\label{l,m1}
u_i=\delta_{ij}\quad\text{ on }\quad \partial_j M,\text{ }\text{ }j=1,\ldots,n,\quad\quad\quad\quad u_i=O(r^{1-2q})\quad\text{ as }\quad r\rightarrow\infty.
\end{equation}
Clearly the set of functions $\{u_1,\ldots, u_n\}$ is linearly independent. Pick $y_i\in\partial_i M$, $i=1,\ldots,n$ and set $\mathbf{y}=(y_1,\ldots,y_n)$. We claim that the Jacobian matrix
\begin{equation}
U(\mathbf{c},\mathbf{y})=
\begin{bmatrix}
    \partial_{\upsilon}u_1(y_1) & \partial_{\upsilon}u_2(y_1) & \dots  & \partial_{\upsilon}u_n(y_1) \\
    \partial_{\upsilon}u_1(y_2) & \partial_{\upsilon}u_2(y_2) & \dots  & \partial_{\upsilon}u_n(y_2) \\
    \vdots & \vdots & \ddots & \vdots \\
    \partial_{\upsilon}u_1(y_n) & \partial_{\upsilon}u_2(y_n) & \dots  & \partial_{\upsilon}u_n(y_n)
\end{bmatrix}
\end{equation}
is invertible, where $\upsilon$ is the unit outer normal to $\partial M$. Suppose by way of contradiction that it is not invertible. Then there exist constants $b_i$, $i=1,\ldots,n$, not all zero, such that $u=\sum_{i=1}^n b_i u_i$ satisfies $\partial_{\upsilon}u(y_j)=0$, $j=1,\ldots,n$. Note that the function $u$ satisfies
\begin{equation}
\Delta u
+\mathcal{K}\frac{\nabla u_{\mathbf{c}}}{|\nabla u_{\mathbf{c}}|}\cdot\nabla u=0\quad\text{ on }\quad M,
\end{equation}
\begin{equation}
u=b_j\quad\text{ on }\quad \partial_j M,\text{ }\text{ }j=1,\ldots,n,\quad\quad\quad \quad u=O(r^{1-2q})\quad\text{ as }\quad r\rightarrow\infty.
\end{equation}
Since $b_i$ are not all zero, we have that $u\neq 0$. On the other hand, by the maximum principle either the global max or min must be achieved on $\partial_{i_0} M$ for some $i_0$. By the Hopf lemma, we then have $\partial_{\upsilon}u(y_{i_0})\neq 0$. However this contradicts the basic property of $u$ described above. It follows that $U$ is invertible.

We now show that $U(\mathbf{c},\mathbf{y})$ stays uniformly bounded and away from being singular. To see this, suppose that for a sequence $\{(\mathbf{c}_l,\mathbf{y}_l)\}_{l=1}^{\infty}$ either $\parallel U(\mathbf{c}_l,\mathbf{y}_l)\parallel\rightarrow \infty$ or $\det U(\mathbf{c}_l,\mathbf{y}_l)\rightarrow 0$. Observe that the solutions $\partial_{c_i}u_{\mathbf{c}_l}$ of \eqref{l,m}, \eqref{l,m1} are uniformly controlled in $W^{2,p}_{loc}(M)$ by the $L^p$ estimates, since the first order coefficients remain uniformly bounded. It follows that there is subsequential convergence in $C^{1,\alpha}(M)$ to a solution $\partial_{c_i}u_{\infty}$, which by elliptic regularity lies in $C^{2,\alpha}(M)$. Consequently, using that the sequence $\{\mathbf{y}_l\}\subset \Pi_{i=1}^n \partial_i M$ lies in a compact set, we find that there is subconvergence $U(\mathbf{c}_l,\mathbf{y}_l)\rightarrow U(\infty)$. However, the arguments of the previous paragraph show that $U(\infty)$ is invertible, and this contradiction yields the desired conclusion. In particular, $U^{-1}(\mathbf{c},\mathbf{y})$ is uniformly bounded.

Consider the map $\mathfrak{U}:\mathbb{R}^n \rightarrow\mathbb{R}^n$ given by
\begin{equation}
\mathfrak{U}(c_1,\ldots,c_n)=(\partial_{\upsilon}u_{\mathbf{c}}(y_1(\mathbf{c})),\ldots,
\partial_{\upsilon}u_{\mathbf{c}}(y_n(\mathbf{c}))),
\end{equation}
where $y_i(\mathbf{c})\in\partial_i M$ is a point at which $\partial_{\upsilon}u_{\mathbf{c}}$ achieves its: minimum over this component when $\varsigma_i=0$, or maximum over this component when $\varsigma_i=1$. Observe that $\mathfrak{U}$ is continuous. Moreover, it will be shown that this function is differentiable in certain directions, and the matrix $U$ will play a role similar to a Jacobian for $\mathfrak{U}$. Set
$\mathbf{p}_0=(\partial_{\upsilon}u_{0}(y_1(0)),\ldots,
\partial_{\upsilon}u_{0}(y_n(0)))$, and let $\mathbf{p}(t)\subset\mathbb{R}^n$ be a smooth curve emanating from $\mathbf{p}_0 =\mathbf{p}(0)$ and ending at $\mathbf{p}(1)=0$. We claim that there is a smooth curve $\mathbf{c}(t)$, $t\in[0,1]$, emanating from $\mathbf{c}(0)=0$, such that $\mathfrak{U}(\mathbf{c}(t))=\mathbf{p}(t)$. To find this solve the ODE initial value problem
\begin{equation}\label{nfr}
\mathbf{c}'(t)=U^{-1}(\mathbf{c}(t),\mathbf{y}(\mathbf{c}(t)))\mathbf{p}'(t),\quad\quad\quad\mathbf{c}(0)=0.
\end{equation}
Observe that global existence holds since $U^{-1}(\mathbf{c},\mathbf{y}(\mathbf{c}))$ is uniformly bounded independent of $\mathbf{c}$.

We will now show that $\mathfrak{U}(\mathbf{c}(t))$ is differentiable. Let $y_i(\mathbf{c})$ be a minimum point for $\partial_{\upsilon}u_{\mathbf{c}}$ on $\partial_i M$, and $0\leq s<t\leq 1$; a similar argument holds or a maximum point. Then
\begin{align}
\begin{split}
\partial_{\upsilon}u_{\mathbf{c}(t)}(y_i(\mathbf{c}(t)))
-\partial_{\upsilon}u_{\mathbf{c}(s)}(y_i(\mathbf{c}(s)))
=&\left[\partial_{\upsilon}u_{\mathbf{c}(t)}(y_i(\mathbf{c}(t)))
-\partial_{\upsilon}u_{\mathbf{c}(s)}(y_i(\mathbf{c}(t)))\right]\\
&+\left[\partial_{\upsilon}u_{\mathbf{c}(s)}(y_i(\mathbf{c}(t)))
-\partial_{\upsilon}u_{\mathbf{c}(s)}(y_i(\mathbf{c}(s)))\right]\\
\geq&\partial_{\upsilon}u_{\mathbf{c}(t)}(y_i(\mathbf{c}(t)))
-\partial_{\upsilon}u_{\mathbf{c}(s)}(y_i(\mathbf{c}(t)))\\
=&\sum_j\partial_{\upsilon}\partial_{c_j}u_{\mathbf{c}(t)}(y_{i}(\mathbf{c}(t))
c'_j(t)(t-s)+o(t-s)\\
=&p'_i(t)(t-s) +o(t-s),
\end{split}
\end{align}
and
\begin{align}
\begin{split}
\partial_{\upsilon}u_{\mathbf{c}(t)}(y_i(\mathbf{c}(t)))
-\partial_{\upsilon}u_{\mathbf{c}(s)}(y_i(\mathbf{c}(s)))
=&\left[\partial_{\upsilon}u_{\mathbf{c}(t)}(y_i(\mathbf{c}(t)))
-\partial_{\upsilon}u_{\mathbf{c}(t)}(y_i(\mathbf{c}(s)))\right]\\
&+\left[\partial_{\upsilon}u_{\mathbf{c}(t)}(y_i(\mathbf{c}(s)))
-\partial_{\upsilon}u_{\mathbf{c}(s)}(y_i(\mathbf{c}(s)))\right]\\
\leq&\partial_{\upsilon}u_{\mathbf{c}(t)}(y_i(\mathbf{c}(s)))
-\partial_{\upsilon}u_{\mathbf{c}(s)}(y_i(\mathbf{c}(s)))\\
=&\sum_j\partial_{\upsilon}\partial_{c_j}u_{\mathbf{c}(s)}(y_{i}(\mathbf{c}(s))
c'_j(s)(t-s)+o(t-s)\\
=&p'_i(s)(t-s)+o(t-s),
\end{split}
\end{align}
where we have used Taylor's theorem and \eqref{nfr} with the notation $\mathbf{p}(t)=(p_1(t),\ldots,p_n(t))$. Dividing both sides of these equations by $t-s$ and letting $t\rightarrow s$ shows that $\mathfrak{U}(\mathbf{c}(t))$ is differentiable, and
\begin{equation}
\frac{d}{dt}\mathfrak{U}(\mathbf{c}(t))=\mathbf{p}'(t).
\end{equation}
Integrating this equation then gives the desired relation. We now have $\mathfrak{U}(\mathbf{c}(1))=0$, so that $\mathbf{c}(1)$ is the claimed set of constants such that $\partial_{\upsilon}u_{\mathbf{c}(1)}(y_i(\mathbf{c}(1)))=0$, $i=1,\ldots,n$. This completes the proof of (1).

Consider now part (2). Suppose that the set within $\partial_i M$ on which $|\nabla u_{\mathbf{c}}|=0$ has a nonempty interior. Then since equation \eqref{sharmonictop} may be viewed as a linear equations with bounded coefficients, the unique continuation result \cite[Theorem 1.7]{TaoZhang} applies to show that $u_{\mathbf{c}}\equiv const$. This contradicts the assumption that $v\neq 0$. Since the set on which $|\nabla u_{\mathbf{c}}|=0$ is also closed, it follows that it is nowhere dense.
\end{proof}

\begin{figure}[h!]
\begin{picture}(0,0)
\put(-20,70){\large{$\partial_i M$}}
\put(5,73){\vector(1,0){73}}
\put(220,120){{\color{blue}\large{$u_{\mathbf{c}}^{-1}(c_i)$}}}
\put(220,80){{\color{red}\large{$u_{\mathbf{c}}^{-1}(0)$}}}
\put(220,30){{\color{brass}\large{$u_{\mathbf{c}}^{-1}(-1)$}}}
\end{picture}
\includegraphics[scale=.4]{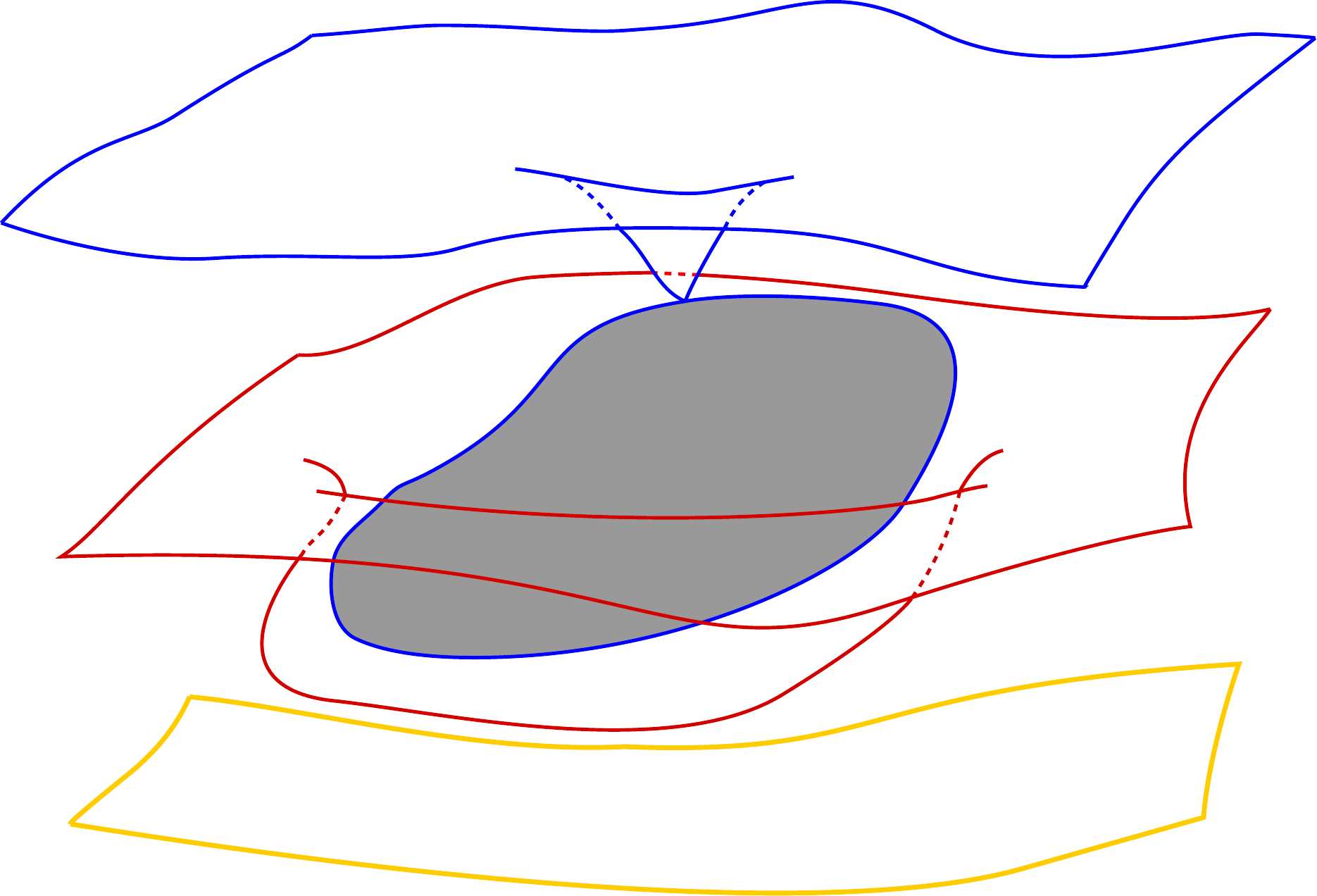}
\caption{Possible level sets of the function $u_{\mathbf{c}}$ constructed in Lemma \ref{l:dirichlet}.}\label{pic:section5}
\end{figure}

We are now in a position to establish the main topological result concerning regular level sets of the spacetime harmonic function $u_{\mathbf{c}}$ arising from Lemma \ref{l:dirichlet}. This will later be employed in generalized exterior regions which have a simplified topology, although we do not use here the MOTS and MITS condition on the boundary of such regions.

\begin{theorem}\label{shfexistence}
Let $(M,g,k)$ be a smooth asymptotically flat initial data set having a single asymptotic end, and satisfying $H_2(M,\partial M;\mathbb{Z})=0$. Let $u_{\mathbf{c}}$ be the solution of \eqref{sharmonictop}, \eqref{sharmonictopb} with $\mathbf{c}$ given by Lemma \ref{l:dirichlet}.
Then all regular level sets of $u_{\mathbf{c}}$ are connected and noncompact with a single end modeled on $\mathbb{R}^2\setminus B_1$.
In particular, if $\Sigma_t$ is a regular level set then its Euler characteristic
satisfies $\chi(\Sigma_t)\leq 1$.
\end{theorem}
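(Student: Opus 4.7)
The strategy combines three observations: (i) the asymptotic behavior of $u_{\mathbf{c}}$ controls the noncompact components of $\Sigma_t$; (ii) the strong maximum principle and Hopf's lemma, combined with Lemma \ref{l:dirichlet}(1), force the super- and sublevel sets $\tilde{\Omega}_\pm := \{\pm(u_{\mathbf{c}} - t) > 0\}$ to be connected; (iii) the homological hypothesis $H_2(M, \partial M; \mathbb{Z}) = 0$, combined with Mayer--Vietoris, then forces $\Sigma_t$ itself to be connected.

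The expansion $u_{\mathbf{c}} = a_i x^i + O_2(r^{1-q})$ shows that outside a sufficiently large coordinate ball the level set $\Sigma_t$ is a smooth graph over the plane $\{a_i x^i = t\}$, yielding a unique noncompact component modeled on $\mathbb{R}^2\setminus B_1$, with all other components compact. For (ii), I view \eqref{sharmonictop} as the linear elliptic equation $\Delta u_{\mathbf{c}} + b\cdot\nabla u_{\mathbf{c}} = 0$ with $b = \mathcal{K}\nabla u_{\mathbf{c}}/|\nabla u_{\mathbf{c}}|$ (extended as zero on the critical set), so that both the strong maximum principle and Hopf boundary-point lemma apply. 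Suppose $V$ were a bounded component of $\tilde{\Omega}_+$. If $V$ were disjoint from $\partial M$, then $u_{\mathbf{c}}\equiv t$ on $\partial V\subset \Sigma_t$, the strong maximum principle would force $u_{\mathbf{c}}\equiv t$ in $V$, and unique continuation (as in Lemma \ref{l:dirichlet}(2)) would extend this globally, contradicting the asymptotic prescription. Thus $V$ must contain some $\partial_j M$, necessarily with $c_j > t$ (otherwise a neighborhood of $\partial_j M$ would lie in $\overline{\tilde{\Omega}_-}$). Set $c^* = \max\{c_j : \partial_j M\subset V\}$ and pick $j^*$ realizing this maximum; then $c^*$ is the global maximum of $u_{\mathbf{c}}$ on $\overline{V}$, attained at every point of $\partial_{j^*}M$, including the critical point $y_{j^*}$ furnished by Lemma \ref{l:dirichlet}(1). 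By the strong maximum principle $u_{\mathbf{c}} < c^*$ in $V$, so Hopf's lemma yields $\partial_\upsilon u_{\mathbf{c}}(y_{j^*}) > 0$ with respect to the outer normal of $M$, directly contradicting $|\nabla u_{\mathbf{c}}(y_{j^*})| = 0$. Hence $\tilde{\Omega}_+$ has no bounded components and is connected; the same argument works for $\tilde{\Omega}_-$.

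For (iii), cap the asymptotic end with a 3-ball to form a compact orientable manifold $\hat{M}$ with $\partial\hat{M} = \partial M$. Mayer--Vietoris for $\hat{M} = M\cup D^3$ along the capping sphere gives $H_1(M)\cong H_1(\hat{M})$, while the long exact sequence of the triple $(\hat{M}, M, \partial M)$ combined with $H_2(\hat{M}, M)\cong H_2(D^3, S^2) = 0$ shows that $H_2(\hat{M}, \partial M)$ is a quotient of $H_2(M, \partial M) = 0$, hence vanishes; Poincar\'e--Lefschetz duality then yields $H^1(\hat{M}; \mathbb{Z}) = 0$, so $H_1(M)$ is torsion. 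Noting that $\Sigma_t \cap \partial M = \emptyset$ since each $c_i$ is a critical value, apply Mayer--Vietoris to $M = \overline{\tilde{\Omega}_+} \cup \overline{\tilde{\Omega}_-}$ with $\overline{\tilde{\Omega}_+} \cap \overline{\tilde{\Omega}_-} = \Sigma_t$. The tail
\[
H_1(M) \to H_0(\Sigma_t) \to H_0(\overline{\tilde{\Omega}_+}) \oplus H_0(\overline{\tilde{\Omega}_-}) \to H_0(M) \to 0,
\]
together with connectedness of $M, \overline{\tilde{\Omega}_+}, \overline{\tilde{\Omega}_-}$, forces the image of $H_1(M) \to H_0(\Sigma_t) = \mathbb{Z}^N$ to have rank $N-1$, where $N$ counts the components of $\Sigma_t$; since $H_1(M)$ is $\mathbb{Q}$-trivial, $N = 1$. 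Hence $\Sigma_t$ coincides with its noncompact component from (i), is a connected orientable surface of finite topology with a single end modeled on $\mathbb{R}^2\setminus B_1$, and $\chi(\Sigma_t) = 1 - 2g \leq 1$. The principal difficulty lies in (ii): one must leverage the constancy of Dirichlet data on each $\partial_j M$ so that the maximum is attained at every point of $\partial_{j^*}M$, including the distinguished critical point $y_{j^*}$ at which Lemma \ref{l:dirichlet}(1) contradicts Hopf's lemma.
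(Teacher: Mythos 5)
Your proof is correct, but the route is genuinely different from the paper's. The paper works directly with a hypothetical compact component $\Sigma'_t \subset \Sigma_t$: using $H_2(M,\partial M;\mathbb{Z})=0$ it classifies $[\Sigma'_t]$ as either trivial or a sum of boundary cycles in $H_2(M)$, so that $\Sigma'_t$ together with (possibly empty) $\cup_{i\in I}\partial_i M$ bounds a compact region $D$, and then the maximum/minimum of $u_{\mathbf{c}}$ on $D$ must occur on some $\partial_{i_0}M$, where the Hopf lemma contradicts the critical point $y_{i_0}$ from Lemma \ref{l:dirichlet}. Connectedness is then obtained directly from the slab/graph asymptotics once all components are shown noncompact. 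You instead make the maximum principle and Hopf argument about \emph{bounded components of the super- and sublevel sets} $\tilde\Omega_\pm$ rather than about compact pieces of the level set: this is a slightly cleaner use of the same basic mechanism, because it avoids the implicit step of showing a null-homologous embedded component actually bounds a compact region. The price you pay is the extra algebraic topology: capping the end, $H_2(M,\partial M)=0 \Rightarrow H_2(\hat M,\partial\hat M)=0 \Rightarrow H^1(\hat M;\mathbb{Z})=0$ via Poincar\'e--Lefschetz, hence $H_1(M)$ torsion, and then a Mayer--Vietoris tail count. Both arguments implicitly assume $M$ is orientable (the paper when representing $[\Sigma'_t]$ by boundary cycles in $H_2(M;\mathbb{Z})$, you when invoking Lefschetz duality on $\hat M$); this is automatic in the intended application since generalized exterior regions are orientable by construction. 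One stylistic simplification for your step (ii): once a hypothetical bounded component $V\subset\tilde\Omega_+$ is disjoint from $\partial M$, you need neither the strong maximum principle nor unique continuation --- the weak maximum principle already gives $u_{\mathbf{c}}\le t$ on $V$, contradicting $V\subset\{u_{\mathbf{c}}>t\}$ directly.
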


\begin{proof}
Let $\Sigma_t=u^{-1}_{\mathbf{c}}(t)$ be a regular level set, and suppose that there is a compact connected component $\Sigma'_{t}\subset\Sigma_t$. Note that $\Sigma'_{t}$ is a 2-sided properly embedded submanifold. Since $H_2(M,\partial M;\mathbb{Z})=0$ the boundary cycles $\partial_i M$, $i=1,\ldots,n$ generate $H_2(M)$. Thus, either $\Sigma'_t$ is homologous to zero or it is homologous to a sum of boundary cycles. In the former case $\Sigma'_{t}$ bounds a compact region of $M$, and since the spacetime harmonic function equation admits a maximum principle the solution $u_{\mathbf{c}}\equiv t$ in this region. This, however, contradicts the assumption that $t$ is a regular value. So now consider the later case in which $[\Sigma'_t]$ can be represented as the sum of boundary classes $\sum_{i\in I}[\partial_i M]$, for some index set $I$. Let $D\subset M$ denote the compact region bounded by $\Sigma'_t \cup\left(\cup_{i\in I}\partial_i M\right)=\partial D$. Since the maximum and minimum of $u_{\mathbf{c}}$ on $D$ are achieved on the boundary, it follows that either the max or min is achieved on $\partial_{i_0} M$, for some $i_0\in I$. In particular, this max or min is achieved at $y_{i_0}\in \partial_{i_0} M$. Next observe that the Hopf lemma applies to the spacetime harmonic function equation, since the nonlinear first order part may be expressed as a linear term with bounded coefficients, and therefore $\partial_{\upsilon}u_{\mathbf{c}} (y_{i_0})\neq 0$. However this contradicts the fact that $y_{i_0}$ is a critical point for $u_{\mathbf{c}}$, as stated in Lemma \ref{l:dirichlet}.
We conclude that all components of $\Sigma_t$ are noncompact. Moreover $\Sigma_t$ is a closed subset of $M$, since it is properly embedded.
Therefore if any component of $\Sigma_t$ stays within $M_r$ (see Section \ref{mr}), 
it must be compact which is a contradiction. It follows that each component must extend beyond $S_r$ for all $r$.

The asymptotics of $u_{\mathbf{c}}\sim a_i x^i$ in the end $M_{end}$ imply that for all sufficiently large $r$ the level set $\Sigma_t$ stays within a slab $\{x\in M\setminus M_r \mid t-C<a_i x^i<t+C\}$, for some constant $C$. Indeed, by the implicit function theorem $\Sigma_t$ may be presented uniquely in this region as a graph over the plane $t=a_i x^i$. Hence, $\Sigma_t$ is connected and has a single end modeled on $\mathbb{R}^2\setminus B_1$.
\end{proof}

\section{Proof of the Mass Lower Bound}
\label{sec5} \setcounter{equation}{0}
\setcounter{section}{6}

Let $(M,g,k)$ be a complete asymptotically flat initial data set for the
Einstein equations, having generalized exterior region $M_{ext}$ associated with a particular end $M_{end}$ and given by Proposition \ref{exterior}; for convenience we will continue denoting the metric and extrinsic curvature on $M_{ext}$ by $(g,k)$. Suppose that $x=(x^1,x^2,x^3)$ are \textit{spacetime harmonic coordinates} on $M_{ext}$. This means that each function $x^l$ satisfies \eqref{sharmonictop}, \eqref{sharmonictopb} and is given by Theorem \ref{Thm:PDE} and Lemma \ref{l:dirichlet} (1), with asymptotics $x^l\sim \tilde{x}^l$ for some given asymptotically flat coordinate system $\tilde{x}=(\tilde{x}^1,\tilde{x}^2,\tilde{x}^3)$ on $M_{end}$. More precisely, by Lemma \ref{l:dirichlet}(1) we may choose the sign of the normal derivative at each boundary component $\partial_i M_{ext}$, $i=1,\ldots,n$ so that:
\begin{equation}\label{boundarybehavior}
\partial_{\upsilon} x^l\leq 0 \text{ on }\partial_i M_{ext}\text{ if }\theta_+\left(\partial_i M_{ext}\right)=0,\quad
\partial_{\upsilon}x^l\geq 0 \text{ on }\partial_i M_{ext}\text{ if }\theta_-\left(\partial_i M_{ext}\right)=0,\text{ } l=1,2,3.
\end{equation}
Note that although $x^l$ are referred to as spacetime harmonic coordinates and are defined on all of $M_{ext}$, they are only guaranteed to form a coordinate system in $M_{end}$. Observe that due to the asymptotic expansion in Theorem \ref{Thm:PDE}, the ADM energy and linear momentum computed in spacetime harmonic coordinates will agree with the computation in any other valid asymptoticaly flat coordinate system \cite{Bartnik}.

For $L>0$ sufficiently large define the cylindrical boundaries
\begin{align}
\begin{split}
&D_L^{\pm}=\{x\in M_{end}\mid (x^2)^2+(x^3)^2\leq L^2,\; x^1=\pm L\},\\
&T_L=\{x\in M_{end}\mid (x^2)^2+(x^3)^2=L^2,\; |x^1|\leq L\},\\
&C_L=D^+_L\cup T_L\cup D^-_L,
\end{split}
\end{align}
and denote by $\Omega_L$ the bounded component of $M_{ext}\setminus C_L$, so that
$\partial\Omega_L=C_L\sqcup\partial M_{ext}$. Let $u=x^1$ be the spacetime harmonic function described above, and set $\Sigma_t =u^{-1}(t)$ as well as $\Sigma_t^L=\Sigma_t\cap\Omega_L$. If $t$ is a regular value of $u$, then $\partial\Sigma_t^L$ lies entirely within $C_L$, due to the fact that $u$ has critical points on each component $\partial_i M_{ext}$, $i=1,\ldots,n$. Note also that the regular level sets $\Sigma_t^L$ meet $T_L$ transversely, and by Theorem \ref{shfexistence}, $\Sigma_t^L$ has only one component so that $\chi(\Sigma_t^L)\leq 1$. Therefore we may apply Proposition \ref{SpacetimeSternCompact} together with the Gauss-Bonnet theorem to obtain
\begin{align}\label{INEQ1}
\begin{split}
&\frac{1}{2}\int_{\Omega_L}
\left(\frac{|\bar{\nabla}^2 u|^2}{|\nabla u|}+2(\mu-|J|_g)|\nabla u|\right)dV\\
\leq&\int_{-L}^{L}\left(2\pi\chi(\Sigma_t)
-\int_{\Sigma_t^L\cap T_L}\kappa_{t,L}\right)dt
+
\int_{\partial_{\neq 0} \Omega_L}\left(\partial_{\upsilon}|\nabla u|+k(\nabla u,\upsilon)\right)dA
\\
\leq&
4\pi L
-\int_{-L}^{L}\left(\int_{\Sigma_t^L\cap T_L}\kappa_{t,L}\right)dt
+\int_{C_L}\left(\partial_{\upsilon}|\nabla u|+k(\nabla u,\upsilon)\right)dA\\
&
+\int_{\partial_{\neq 0}M_{ext}}\left(\partial_{\upsilon}|\nabla u|+k(\nabla u,\upsilon)\right)dA,
\end{split}
\end{align}
where $\kappa_{t,L}$ is the geodesic curvature of $\Sigma_t^L\cap T_L$ interpreted as the boundary curve in $\Sigma_t$, $\partial_{\neq0}M_{ext}$ denotes the subset of $\partial M_{ext}$ where $|\nabla u|\neq0$, and we have used that $|\nabla u|>0$ on $C_L$.

In what follows we will compute first the outer boundary integral along $C_L$ in the asymptotic end, from which the ADM energy and linear momentum will arise. The inner boundary integral along $\partial_{\neq 0}M_{ext}$ will then be computed and shown to vanish, due to the fact that $\partial M_{ext}$ consists of MOTS and MITS.
Below, the notation $\int_{D_L^{\pm}}\pm f$ will be used to represent $\int_{D_L^{+}}f-\int_{D_L^{-}}f$.

\subsection{Computation of the outer boundary integral}

In \cite[Lemma 6.1]{BrayKazarasKhuriStern}, a computation was carried out in
harmonic coordinates. Each step of the proof applies here without change using spacetime harmonic coordinates, except for equation \cite[(6.9)]{BrayKazarasKhuriStern} where harmonicity was used. By replacing the harmonic function equation with the spacetime harmonic function equation, in this calculation, we find that
\begin{align}
\begin{split}
\int_{C_L}\partial_{\upsilon}|\nabla u| dA=&\int_{D_L^{\pm}}\pm\left(\frac12 \sum_{j}(g_{1j,j}-g_{jj,1})-\mathcal{K}\right)dA\\
&+\frac{1}{2L}\int_{T_L}\left[x^2(g_{21,1}-g_{11,2})+x^3(g_{31,1}-g_{11,3})\right]dA
+O(L^{1-2q}).
\end{split}
\end{align}
Similarly, \cite[Lemma 6.2]{BrayKazarasKhuriStern} may be carried over without
change to the current setting to yield
\begin{align}
\begin{split}
\int_{-L}^L\left(\int_{\Sigma_t\cap T_L}\kappa_{t,L}\right)dt=&4\pi L+\frac{1}{2L}\int_{T_L}\left[x^2(g_{33,2}-g_{23,3})+x^3(g_{22,3}-g_{32,2})\right]dA\\
&+O(L^{1-2q}+L^{-q}).
\end{split}
\end{align}
Next, observe that the outward normal $\upsilon$ to $C_L$ takes the form
\begin{equation}
\upsilon=\begin{cases}
\pm\partial_1+O(|x|^{-q})&\text{ on }D^\pm_L ,\\
\frac{x^2}{L}\partial_2+\frac{x^3}{L}\partial_3+O(|x|^{-q})&\text{ on }T_L .
\end{cases}
\end{equation}
Furthermore
\begin{equation}
\nabla u=g^{i1}\partial_i=\partial_1 +O(|x|^{-q}).
\end{equation}
It follows that
\begin{equation}
k(\nabla u,\upsilon)=\pm k_{11}+O(|x|^{-1-2q})\quad\text{ on }\quad D_{L}^{\pm},
\end{equation}
and
\begin{equation}
k(\nabla u,\upsilon)=\frac{x^2}{L}k_{12}+\frac{x^3}{L}k_{13}+O(|x|^{-1-2q}).
\end{equation}
Finally, combining these computations produces
\begin{align}\label{outerboundary}
\begin{split}
&4\pi L
-\int_{-L}^{L}\left(\int_{\Sigma_t^L\cap T_L}\kappa_{t,L}\right)dt
+\int_{C_L}\left(\partial_{\upsilon}|\nabla u|+k(\nabla u,\upsilon)\right)dA\\
=&\frac{1}{2}\int_{D_L^{\pm}}\pm \sum_j (g_{1j,j}-g_{jj,1})dA\\
&+\frac{1}{2}\int_{T_L}\left[\frac{x^2}{L}(g_{21,1}-g_{11,2})
+\frac{x^3}{L}(g_{31,1}-g_{11,3})\right]dA\\
&+\frac{1}{2}\int_{T_L}\left[\frac{x^2}{L}(g_{23,3}-g_{33,2})
+\frac{x^3}{L}(g_{32,2}-g_{22,3})\right]dA\\
&+\int_{D^\pm_L}\pm \left(k_{11}-\mathcal{K}\right)dA+\int_{T_L}\left(\frac{x^2}{L}k_{12}+\frac{x^3}{L}k_{13}\right)dA
+O(L^{1-2q}+L^{-q})\\
=&\frac{1}{2}\int_{C_L}\sum_j (g_{ij,j}-g_{jj,i})\upsilon^i dA
+\int_{C_L} \left(k_{1i}-(\mathrm{Tr}_g k)g_{1i}\right)\upsilon^i dA
+O(L^{1-2q}+L^{-q}).
\end{split}
\end{align}

\subsection{Computation of the inner boundary integral}

Here we show that the inner boundary integral over $\partial_{\neq 0}M_{ext}$ vanishes, due to boundary behavior of the spacetime harmonic function combined with the fact that each boundary component is either a MOTS or MITS. Moreover, if the boundary components consist of weakly trapped surfaces then the inner boundary integral is nonpositive, which is an advantageous sign with respect to positivity of the ADM energy. Let $\upsilon$ denote the unit normal to a boundary component $\partial_i M_{ext}$, which points outside of $M_{ext}$. Then because $u$ is constant on $\partial_i M_{ext}$, the spacetime harmonic function equation and gradient may be rewritten on this surface as
\begin{equation}
\nabla_{\upsilon}^2 u=H\upsilon(u) - \mathcal{K}|\nabla u|,\quad\quad\quad
\nabla u=\upsilon(u)\upsilon.
\end{equation}
Note that here, the mean curvature $H$ is computed with respect to $-\upsilon$.
Observe that
\begin{equation}
|\nabla u|\partial_{\upsilon}|\nabla u|
=\frac{1}{2}\partial_{\upsilon}|\nabla u|^2
=\frac{1}{2}\partial_{\upsilon}\left(g^{ij}u_i u_j\right)
=u^j \nabla_{j\upsilon} u=\upsilon(u)\nabla_{\upsilon}^2 u,
\end{equation}
and hence
\begin{equation}
\partial_{\upsilon}|\nabla u|=\frac{\upsilon(u)}{|\nabla u|}\nabla_{\upsilon}^2 u
=\frac{\upsilon(u)}{|\nabla u|}\left(H\upsilon(u)-\mathcal{K}|\nabla u|\right)
=H|\upsilon(u)|-\mathcal{K}\upsilon(u).
\end{equation}
Furthermore since
\begin{equation}
\mathcal{K}=k(\upsilon,\upsilon)+\mathrm{Tr}_{\partial M_{ext}}k,\quad\quad\quad\quad k(\nabla u,\upsilon)=k(\upsilon,\upsilon)\upsilon(u),
\end{equation}
it follows that the inner boundary integral becomes
\begin{align}\label{innerboundary}
\begin{split}
\int_{\partial_{\neq 0}M_{ext}}\left(\partial_{\upsilon}|\nabla u|+k(\nabla u,\upsilon)\right)dA=&\int_{\partial M_{ext}}\left[H|\upsilon(u)|
-\left(\mathrm{Tr}_{\partial M_{ext}}k\right)\upsilon(u)\right]dA\\
=&\sum_{i=1}^n \int_{\partial_i M_{ext}}\theta_{\pm}|\upsilon(u)|dA,
\end{split}
\end{align}
where we have used \eqref{boundarybehavior} in the last step. The notation $\theta_{\pm}$ above indicates that the integrand contains $\theta_+$ for a MOTS
component and $\theta_-$ for a MITS component. We conclude that the inner boundary integral vanishes. Similarly, if the boundary of the generalized exterior region consists of weakly trapped surfaces then this boundary integral is nonpositive.

\subsection{Proof of Theorem \ref{main} and Corollary \ref{main1} (the inequality)}

By combining \eqref{INEQ1}, \eqref{outerboundary}, \eqref{innerboundary}, and taking the limit as $L\rightarrow \infty$ we obtain
\begin{equation}
E+P_1 \geq\frac{1}{16\pi}\int_{M_{ext}}
\left(\frac{|\bar{\nabla}^2 u|^2}{|\nabla u|}+2(\mu-|J|_g)|\nabla u|\right)dV,
\end{equation}
since $q>\frac{1}{2}$. Furthermore, it may be assumed without loss of generality that the ADM linear momentum satisfies $P_1=-|P|$, by applying an appropriate rotation of the asymptotically flat coordinates $\tilde{x}$. This yields the desired inequalities \eqref{mainlowerb} and \eqref{mainlowerb5}.

\section{The Case of Equality}
\label{sec6} \setcounter{equation}{0}
\setcounter{section}{7}

In this section we will finish the proof of Theorem \ref{main} and Corollary \ref{main1} by establishing the rigidity statement. Namely, it will be shown that if the dominant energy condition holds and $E=|P|$, then $E=|P|=0$, $M$ is diffeomorphic to $\mathbb{R}^3$, and the initial data set $(M,g,k)$ arises from an isometric embedding into Minkowski space. The proof of inequality \eqref{mainlowerb} shows that if $E=|P|$, then $|\bar{\nabla}^2 u|=0$ on $M_{ext}$ for some asymptotically linear spacetime harmonic function. This in turn guarantees that the gradient never vanishes.

\begin{lemma}\label{l:nocrit}
Suppose that $u\in C^2(M_{ext})$ satisfies
\begin{equation}
\begin{cases}
\bar{\nabla}_{ij}u=\nabla_{ij} u+k_{ij}|\nabla u|=0&\text{ in }M_{ext},\\
u=v +O(r^{1-2q})&\text{ in }M_{end},
\end{cases}
\end{equation}
where $v\neq 0$ is a solution of \eqref{vequation1} and \eqref{vequationb1} in $M_{ext}$. Then there exists a constant $c>0$ such that $|\nabla u|\geq c$ on $M_{ext}$.
\end{lemma}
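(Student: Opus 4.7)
The plan is to extract from $\bar{\nabla}^2 u = 0$ a first-order differential inequality for $F := |\nabla u|^2$ that propagates positivity along paths. Differentiating $F$ and using $\nabla_{ij} u = -k_{ij}|\nabla u|$ gives
\begin{equation}
\nabla_i F = 2 u^j \nabla_{ij} u = -2 k_{ij} u^j |\nabla u|,
\end{equation}
and hence pointwise on $M_{ext}$,
\begin{equation}
|\nabla F|_g \leq 2|k|_g \cdot |\nabla u|^2 = 2|k|_g F,
\end{equation}
with $|k|_g$ uniformly bounded by asymptotic flatness. Crucially this inequality holds even at zeros of $F$, since the right-hand side of the formula for $\nabla F$ vanishes there.

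Next I would identify a reference region where $F$ is bounded away from zero. Because $v$ is asymptotic to $a_i x^i$ with $|\vec{a}|=1$ and $u = v + O_2(r^{1-2q})$ by Theorem \ref{Thm:PDE}, the gradient $\nabla u$ tends to $\vec{a}$ at infinity, so there is a compact set $K \subset M_{ext}$ such that $F \geq 3/4$ throughout $M_{ext}\setminus K$. In particular $F > 0$ on the entire asymptotic region.

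The main step is a Gronwall-type propagation along curves. Given any $q \in M_{ext}$, pick by connectedness a smooth arc-length parametrized curve $\gamma:[0,L] \to M_{ext}$ from a reference point $p_0 \in M_{ext}\setminus K$ to $q$. Then $G(s):=F(\gamma(s))$ is $C^1$, nonnegative, and satisfies $|G'(s)| \leq C_0 G(s)$ with $C_0 := 2 \sup_{M_{ext}} |k|_g$. On the maximal subinterval $[0,s^*)$ of positivity, $\log G$ is Lipschitz with constant $C_0$, yielding $G(s)\geq G(0)e^{-C_0 s}$; passing $s \uparrow s^*$ contradicts $G(s^*)=0$ by continuity unless $s^* > L$. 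Hence $F(q) = G(L) > 0$, and since $q$ was arbitrary, $|\nabla u| > 0$ throughout $M_{ext}$.

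Finally, continuity of $|\nabla u|$ on the compact set $K$ gives a strictly positive minimum $c_1 := \min_K |\nabla u| > 0$, and combined with $|\nabla u| \geq \sqrt{3}/2$ on $M_{ext}\setminus K$ we arrive at the uniform bound $|\nabla u| \geq c := \min(c_1, \sqrt{3}/2) > 0$ throughout $M_{ext}$. The only potentially delicate point is the behavior of the Gronwall comparison at a possible zero of $F$, but the fact that $|\nabla F| \leq 2|k|_g F$ is a pointwise inequality valid everywhere — including at zeros of $F$ — lets the standard maximal-interval argument run without modification.
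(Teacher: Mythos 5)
Your proof is correct and follows essentially the same route as the paper: both exploit $\nabla^2 u = -k\,|\nabla u|$ to bound the logarithmic derivative of $|\nabla u|$ along curves and then integrate (a Gronwall/ODE comparison); the paper bounds $|\nabla\log|\nabla u||$ directly via Kato's inequality whereas you work with $F=|\nabla u|^2$, which has the minor technical advantage that $F$ is genuinely $C^1$ everywhere (including at zeros of $\nabla u$) so no almost-everywhere caveat is needed.
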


\begin{proof}
Due to the asymptotics in the asymptotically flat end, there exists a large coordinate sphere $S_{r_0}$ such that $|\nabla u|\geq \tfrac{1}{2}$ holds on the unbounded component of $M_{ext}\setminus S_{r_0}$. Denote the bounded component of $M_{ext}\setminus S_{r_0}$ by $M_{r_0}$, and pick $x_0 \in S_{r_0}$. Given $x\in M_{r_0}$, let $\gamma\subset M_{r_0}$ be a curve parameterized by arclength connecting $x_0$ to $x$.
Observe that since
\begin{equation}
|\nabla|\nabla u||\leq |\nabla^2 u|\leq |k||\nabla u|,
\end{equation}
we have that
\begin{equation}
\left|\left(\log|\nabla u|\circ\gamma\right)'\right|\leq |\nabla\log|\nabla u||\circ\gamma
\leq C(1+r\circ\gamma)^{-1-q},
\end{equation}
where \eqref{asymflat} was used and it is assumed that the radial function $r$ in the asymptotically flat end is extended smoothly to a positive function on all of $M_{ext}$. By integrating along $\gamma$, it follows that there is a constant $C_1>0$ such that
\begin{equation}
C_1^{-1}|\nabla u(x_0)|\leq|\nabla u(x)|\leq C_1|\nabla u(x_0)|,\quad\quad\quad x\in M_{r_0}.
\end{equation}
Clearly $C_1$ may be chosen independently of $x$. The desired result follows since $|\nabla u(x_0)|\geq\tfrac{1}{2}$.
\end{proof}

Nonvanishing of the gradient is inconsistent with the boundary behavior of  spacetime harmonic functions given by Theorem \ref{Thm:PDE} and Lemma \ref{l:dirichlet} (1). This implies that the generalized exterior region has no boundary, and leads to trivial topology for the initial data.

\begin{prop}\label{r3}
Let $(M,g,k)$ be an asymptotically flat initial data set for the Einstein equations satisfying the dominant energy condition. If $E=|P|$ in one of the asymptotic ends, then $M$ is diffeomorphic to $\mathbb{R}^3$.
\end{prop}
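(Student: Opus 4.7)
The plan is to derive the rigidity from the integral inequality in Corollary \ref{main1}. Choose a unit vector $\vec{a}$ with $\langle\vec{a},P\rangle=-|P|$, so that $E+\langle\vec{a},P\rangle=E-|P|=0$, and produce a generalized exterior region $M_{ext}$ associated with the given end via Proposition \ref{exterior}. Using Theorem \ref{Thm:PDE} together with Lemma \ref{l:dirichlet}(1), let $u$ be an admissible spacetime harmonic function on $M_{ext}$ asymptotic to $a_ix^i$. Since the dominant energy condition makes the integrand of \eqref{mainlowerb5} pointwise nonnegative while its integral vanishes, the integrand must vanish identically; in particular $|\bar{\nabla}^2 u|\equiv 0$ on $M_{ext}$.

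I would then exploit Lemma \ref{l:nocrit}, whose hypothesis $v\not\equiv 0$ is satisfied since $v\sim a_ix^i$, to obtain a uniform positive lower bound $|\nabla u|\geq c>0$ on $M_{ext}$. If $\partial M_{ext}$ were nonempty, Lemma \ref{l:dirichlet}(1) would furnish a critical point of $u$ on each boundary component, contradicting the lower bound; hence $\partial M_{ext}=\emptyset$. With no critical points on the complete manifold $M_{ext}$, the smooth vector field $V=\nabla u/|\nabla u|^2$ has uniformly bounded norm $|V|\leq 1/c$, and therefore has complete flow by completeness of $M_{ext}$. Since $\langle\nabla u,V\rangle\equiv 1$ along integral curves of $V$, its time-$t$ flow trivializes $u$ as a projection $M_{ext}\cong\mathbb{R}\times F$, where $F$ is any regular level set.

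To identify $F\cong\mathbb{R}^2$, I would appeal to the PSC topology of $M_{ext}$ established in Proposition \ref{exterior}: $M_{ext}$ is a connected sum of $\mathbb{R}^3$ (modeling its unique end) with finitely many $S^1\times S^2$ factors and spherical space forms, less finitely many open balls corresponding to boundary components. Empty boundary removes the balls, and $H_2(M_{ext};\mathbb{Z})=H_2(M_{ext},\partial M_{ext};\mathbb{Z})=0$ rules out the $S^1\times S^2$ summands. Thus $\pi_1(F)\cong \pi_1(M_{ext})$ is a free product of the finite spherical groups. On the other hand, Theorem \ref{shfexistence} shows $F$ is a noncompact surface with a single end modeled on $\mathbb{R}^2\setminus B_1$, so $\pi_1(F)$ is free. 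A free product of finite groups is free only when every factor is trivial, and a simply connected noncompact surface with one end must be $\mathbb{R}^2$. Therefore $F\cong\mathbb{R}^2$ and $M_{ext}\cong\mathbb{R}^3$.

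To finish, trace the generalized exterior construction back to $M$. Emptiness of $\partial M_{ext}$ means no MOTS or MITS were excised, triviality of $\pi_1(M_{ext})$ combined with the nonexistence of free involutions on $\mathbb{R}^3$ forces all covers in the orientable and $b_1$-reduction steps to be trivial, and the single end of $M_{ext}$ forces $M$ to have a single end. Hence $M\cong M_{ext}\cong\mathbb{R}^3$. The main obstacle is the topological identification of the level set, which hinges on the PSC classification together with the incompatibility of nontrivial free products of finite groups with free surface groups.
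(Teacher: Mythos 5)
Your core idea is a genuinely different route to the topology. After obtaining $\partial M_{ext}=\emptyset$ (same as the paper, via Lemmas \ref{l:nocrit} and \ref{l:dirichlet}(1)), you integrate the bounded, complete vector field $\nabla u/|\nabla u|^2$ to write $M_{ext}\cong\mathbb{R}\times F$, then identify $F\cong\mathbb{R}^2$ by playing off Theorem \ref{shfexistence} (which gives $\pi_1(F)$ free) against the PSC-type topology of $M_{ext}$ (which gives $\pi_1(M_{ext})$ a free product of finite groups). The paper does none of this; it argues for $M\cong\mathbb{R}^3$ directly, producing MOTS via trapped-surface barriers in $M$ and in its finite-sheeted covers to rule out multiple ends and nonorientability, and then invoking Hempel's residual finiteness of $3$-manifold groups (together with geometrization and the Poincar\'{e} conjecture) to rule out nontrivial $\pi_1$. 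Your gradient-flow trivialization is an attractive substitute which, if completed, would even sidestep Hempel's theorem, since it delivers $\pi_1(M_{ext})=0$ without iterating over finite covers of $M$.

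The gap is in the final ``tracing back'' step. Recall that $M_{ext}$ is not, in general, a subset or a cover of $M$: the construction of Proposition \ref{exterior} passes to double covers of intermediate objects $(M',\overline{M},\ldots)$, so the relationship between $M_{ext}$ and $M$ has to be read off from the construction itself. Your assertions — that $\partial M_{ext}=\emptyset$ means ``no MOTS or MITS were excised,'' that triviality of $\pi_1(M_{ext})$ together with the absence of free involutions on $\mathbb{R}^3$ trivializes the covering steps, and that the single end of $M_{ext}$ forces $M$ to have a single end — are plausible but left unjustified. Making them rigorous requires unwinding Proposition \ref{exterior}: every $b_1$-reduction iteration deposits the excised outermost MOTS as a boundary component, so emptiness of $\partial M_{ext}$ forces the iteration never to have started, i.e.\ $M_{ext}=M'$ with $\mathcal{S}=\emptyset$ and $M'$ equal to $M$ or its orientable double cover; then the fact that $M_{ext}$ has one end forces the (possible) double cover to have had a single end, which forces $M$ orientable with one end. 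But ruling out auxiliary asymptotic ends of $M$ already needs the paper's trapped-barrier argument (large spheres in nondesignated ends have $\theta_+<0$, so the Jang construction removes a MOTS). In other words, the barrier-based MOTS existence is still the engine behind the reduction to $M$; you should spell this out rather than appeal to free-involution considerations, which do not directly apply since $M_{ext}$ does not cover $M$.

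One further small point: Proposition \ref{exterior}'s proof explicitly establishes PSC topology only for $M'$, and describes later iterates merely as ``balls removed from a connected sum of $S^1\times S^2$'s and a rational homology sphere.'' In your setting this is harmless because $\partial M_{ext}=\emptyset$ forces $M_{ext}=M'$, but the statement that $\pi_1(M_{ext})$ is a free product of finite groups does rely on this reduction; it is worth noting it.
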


\begin{proof}
As described above, the generalized exterior region associated with a designated end $M_{end}$ satisfying $E=|P|$ must have empty boundary. This fact can be applied repeatedly in order to conclude the desired result. The arguments follow closely those of \cite[Theorem 5.1]{EichmairGallowayPollack}, and thus here we simply outline the main steps. We first claim that $M$ has a single asymptotically flat end. Suppose instead that it has other nondesignated ends. Then large coordinate spheres in these nondesignated ends are outer trapped from the point of view of the designated end, that is, they satisfy $\theta_+ <0$; this null expansion is computed with respect to the unit normal pointing away from the nondesignated end. Furthermore, a large coordinate sphere in the designated end is untrapped, that is, it satisfies $\theta_+>0$; this null expansion is computed with respect to the unit normal pointing towards the designated end. These surfaces may be used as barriers to produce a MOTS in the region that they bound, see \cite{AnderssonMetzger,Eichmair-1}. The existence of this MOTS implies that the generalized exterior region for the designated end must have a nonempty boundary, contradicting the fact that $E=|P|$ in this end. It follows that $M$ has a single end.

Suppose now that $M$ is not orientable. Then it possesses a connected orientable double cover, endowed with the pullback data coming from $(g,k)$. This cover has two ends, and therefore as above there exists a MOTS. Since $E=|P|$ in each end, this is a contradiction. We conclude that $M$ is orientable. It follows that $M\cong N\# \mathbb{R}^3$, where $N$ is a compact orientable 3-manifold without boundary. According to \cite{Hempel} and the resolution of the geometrization conjecture,
there is a normal subgroup of $\pi_1(N)=\pi_1(M)$ with finite index that does not contain a given non-identity element; such groups are referred to as residually finite. Thus, if $M$ is not simply connected then it possesses a nontrivial finite sheeted cover. As before, the fact that this covering has multiple ends leads to a contradiction with $E=|P|$. We conclude that $M$ is simply connected. The positive resolution of the Poincar\'{e} conjecture then shows that $M\cong\mathbb{R}^3$.
\end{proof}

We will now establish the main result of this section, and complete the proof of Theorem \ref{main} and Corollary \ref{main1}. The proof is motivated by Beig and Chru\'{s}ciel's treatment \cite[Theorem 4.1]{BeigChrusciel} of the spinorial approach to the rigidity statement. One advantage of the approach presented here is that the isometric embedding into Minkowski space may be presented explicitly as a graph given by a linear combination of spacetime harmonic functions.

\begin{theorem}
Let $(M,g,k)$ be an asymptotically flat initial data set for the Einstein equations satisfying the dominant energy condition. If $E=|P|$ in one of the asymptotic ends, then $E=|P|=0$ and $(M,g,k)$ arises from an isometric embedding into Minkowski space.
\end{theorem}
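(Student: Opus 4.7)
The plan is to leverage the equality $E=|P|$ to produce an explicit isometric embedding of $(M,g,k)$ into Minkowski space. Applying \eqref{mainlowerb5} with $\vec a=-P/|P|$ (or any unit $\vec a$ when $P=0$), the nonnegativity of both integrand factors under the dominant energy condition forces them to vanish pointwise, producing a spacetime harmonic function $u$ on $M_{ext}$ with $\bar\nabla^2 u\equiv 0$ and $(\mu-|J|_g)|\nabla u|\equiv 0$. Combining with Lemma \ref{l:nocrit} (giving $|\nabla u|\geq c>0$) and Proposition \ref{r3} (giving $M\cong\mathbb{R}^3$ with $M_{ext}=M$), we obtain $\mu=|J|_g$ pointwise. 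Expanding $\bar\nabla^2 u=0$ gives $\nabla^2 u=-k|\nabla u|$ and, by differentiating $|\nabla u|^2$, the companion identity $\nabla|\nabla u|=-k(\nabla u,\cdot)$. Consequently, the level sets $\Sigma_t=u^{-1}(t)$ foliate $M$ by copies of $\mathbb{R}^2$ satisfying $II_{\Sigma_t}=-k|_{T\Sigma_t}$, so each leaf is a MOTS.

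The next step, which I expect to be the main technical obstacle, is to show $E=|P|=0$ and thereby obtain three spacetime harmonic functions with vanishing spacetime Hessian. Since $\mu=|J|_g$ holds pointwise independently of which spacetime harmonic function is tested, \eqref{mainlowerb5} applied to any spacetime harmonic function $w$ asymptotic to $\langle\vec a',x\rangle$ simplifies to
\begin{equation*}
E+\langle\vec a',P\rangle\;\geq\;\frac{1}{16\pi}\int_{M}\frac{|\bar\nabla^2 w|^2}{|\nabla w|}\,dV\;\geq\;0.
\end{equation*}
Tracing through the chain of inequalities leading to \eqref{mainlowerb5} in the equality case, using $\chi(\Sigma_t)\leq 1$ from Theorem \ref{shfexistence} and the Gauss--Bonnet theorem applied to each $\Sigma_t^L$, the saturation of every intermediate bound should force $K_{\Sigma_t}\equiv 0$ on almost every leaf and refine the saturated DEC to $J=-|J|_g\nu$ with $\nu=\nabla u/|\nabla u|$. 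This pointwise rigidity, together with the asymptotic structure in the end, should propagate to yield $P=0$, whence $E=|P|=0$. Once this holds, the displayed inequality activates with equality in every direction, so Theorem \ref{Thm:PDE} provides three spacetime harmonic functions $y^1,y^2,y^3$ asymptotic to the Cartesian coordinates of $M_{end}$ with $\bar\nabla^2 y^i\equiv 0$ and $|\nabla y^i|>0$ (again via Lemma \ref{l:nocrit}).

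Finally, I would construct an isometric embedding $\Phi:M\to\mathbb{M}^4$ as a graph over a standard Minkowski time slice. The map $\vec y=(y^1,y^2,y^3):M\to\mathbb{R}^3$ is a diffeomorphism by the asymptotic behaviour and the global nonvanishing of the gradients. A height function $y^0$ is then obtained by integrating a closed $1$-form built from $\{y^i,|\nabla y^i|\}$; closedness follows from $\bar\nabla^2 y^i=0$ together with the identities $\nabla|\nabla y^i|=-k(\nabla y^i,\cdot)$, and single-valuedness from $M\cong\mathbb{R}^3$. The map $\Phi=(y^0,y^1,y^2,y^3)$ presents $(M,g,k)$ as a spacelike hypersurface in Minkowski space, and the relations $\nabla^2 y^i=-k|\nabla y^i|$ combined with the null gradient structure $\mathbf{n}(\tilde y^i)=-|\nabla y^i|$ imply $\Phi^*\eta=g$ and that the induced second fundamental form of $\Phi(M)$ equals $k$, completing the proof.
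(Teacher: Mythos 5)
Your overall architecture is sound and close to the paper's: identify that saturation of \eqref{mainlowerb5} forces $\bar\nabla^2 u\equiv 0$ and $\mu=|J|_g$, invoke Lemma \ref{l:nocrit} and Proposition \ref{r3} for $|\nabla u|>0$ and $M\cong\mathbb{R}^3$, then build an explicit flat embedding from spacetime harmonic functions. However, the proposal has a genuine gap precisely at the step you flag as ``the main technical obstacle,'' namely proving $E=|P|=0$ from $E=|P|$.

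Your suggested route --- that saturation of the chain of inequalities forces $K_{\Sigma_t}\equiv 0$ and $J=-|J|_g\,\nu$, and that ``this pointwise rigidity, together with the asymptotic structure in the end, should propagate to yield $P=0$'' --- is not an argument. Applying \eqref{mainlowerb5} only in the single direction $\vec a=-P/|P|$ gives you $\bar\nabla^2 u=0$ for that one function; for other directions the right-hand side need not vanish, so you do not get vanishing spacetime Hessian in the independent directions needed to build the lapse and shift. The pointwise identities you extract (each leaf is a MOTS with $II=-k|_{T\Sigma_t}$, and $J=-|J|_g\nu$) are correct consequences, but they do not visibly force $P=0$: $\nu$ is not asymptotically constant relative to the other coordinate directions, and there is no clean divergence computation turning these local identities into the vanishing of the ADM momentum components $P_2,P_3$. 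This is exactly the difficulty the paper sidesteps by invoking the theorem of Huang and Lee \cite{HuangLee}, which establishes that whenever $E\geq|P|$ holds, equality $E=|P|$ forces $E=|P|=0$; that reduction is a substantial result in its own right and should be cited rather than rederived in two sentences. Once you grant $E=|P|=0$, your plan to get three functions $y^1,y^2,y^3$ with $\bar\nabla^2 y^i\equiv 0$ is correct, and matches the paper.

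Two smaller issues in the endgame. First, you assert that $\vec y=(y^1,y^2,y^3)$ is a diffeomorphism ``by the asymptotic behaviour and the global nonvanishing of the gradients''; nonvanishing of each $|\nabla y^i|$ alone does not give pointwise linear independence of $\nabla y^1,\nabla y^2,\nabla y^3$, so invertibility of $d\vec y$ requires an additional argument. Second, your description of the height function $y^0$ as ``obtained by integrating a closed $1$-form built from $\{y^i,|\nabla y^i|\}$'' does not specify the form; the paper instead uses an explicit combination of three additional spacetime harmonic functions (those asymptotic to $\tfrac{1}{\sqrt2}(x^1+x^2)$, $\tfrac{1}{\sqrt2}(-x^1+x^3)$, $\tfrac{1}{\sqrt2}(x^2+x^3)$) to form a lapse $\alpha$ and shift $\beta=d\mathbf{u}$, builds the Killing development, shows $\alpha^2-|\beta|^2\equiv1$, proves flatness by exhibiting covariantly constant null frame fields $X_\ell=\nabla u_\ell+|\nabla u_\ell|\mathbf{n}$, and finally identifies the spatial factor $(\mathbb{R}^3,g+d\mathbf{u}^2)$ with Euclidean space by completeness and flatness. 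Framing the construction through the Killing development both organizes the verification that $\Phi^*\eta=g$ and that the induced second fundamental form is $k$, and avoids the two unproved assertions above.
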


\begin{proof}
Proposition \ref{r3} shows that $M\cong\mathbb{R}^3$, so there is only one end. Let $x=(x^1,x^2,x^3)$ be a global coordinate system which coincides with the asymptotically flat coordinates in this end, and denote the spacetime harmonic function of Theorem \ref{Thm:PDE} which is asymptotic to $a_i x^i$ by $u(a_1,a_2,a_3)$; it is assumed as usual that $\sum_i a_i^2 =1$.
Define a lapse function $\alpha$ and shift vector $\beta$ by
\begin{equation}
\alpha=\left|\nabla u\left(\frac{1}{\sqrt{2}},\frac{1}{\sqrt{2}},0\right)\right|+\left|\nabla u\left(-\frac{1}{\sqrt{2}},0,\frac{1}{\sqrt{2}}\right)\right|-\left|\nabla u\left(0,\frac{1}{\sqrt{2}},\frac{1}{\sqrt{2}}\right)\right|,
\end{equation}
and
\begin{equation}
\beta=\nabla u\left(\frac{1}{\sqrt{2}},\frac{1}{\sqrt{2}},0\right)+\nabla u\left(-\frac{1}{\sqrt{2}},0,\frac{1}{\sqrt{2}}\right)-\nabla u\left(0,\frac{1}{\sqrt{2}},\frac{1}{\sqrt{2}}\right).
\end{equation}
Observe that $\alpha\rightarrow 1$ and $|\beta|\rightarrow 0$ as $r\rightarrow\infty$. From these we may form the stationary spacetime
$(\mathbb{R}\times M, \bar{g})$ where
\begin{equation}
\bar{g}=-\left(\alpha^2 -|\beta|^2\right)dt^2 +2\beta_i dx^i dt +g.
\end{equation}
This is the Killing development of $(M,g,k,\alpha,\beta)$ with Killing initial data $(\alpha,\beta)$ decomposing the Killing vector
\begin{equation}
\partial_t =\alpha \mathbf{n}+\beta,
\end{equation}
where $\mathbf{n}$ is the unit normal to the hypersurfaces $t=const$. It will be shown that this spacetime is isometric to Minkowski space, and that $(M,g,k)$ arise from a constant time slice.

Next observe that the inequality $E\geq |P|$ implies that $E=|P|=0$ from Huang and Lee's result \cite{HuangLee}.
The condition $E=|P|=0$ together with \eqref{mainlowerb} then produces
\begin{equation}
\nabla_{ij}u(a_1,a_2,a_3)=-\left|\nabla u(a_1,a_2,a_3)\right|k_{ij},
\end{equation}
and therefore
\begin{equation}
\nabla_i \beta_j =-\alpha k_{ij},\quad\quad\quad \partial_i \alpha=-\beta^j k_{ij}.
\end{equation}
It follows that
\begin{equation}
\frac{1}{2}\partial_i(\alpha^2-|\beta|^2)=\alpha\partial_i \alpha-\beta^j\nabla_i \beta_j=0.
\end{equation}
Since $\alpha^2 -|\beta|^2 \rightarrow 1$ as $r\rightarrow\infty$, we then have that $\alpha^2-|\beta|^2 \equiv 1$
and
\begin{equation}
\bar{g}=-dt^2 +2\beta_i dx^i dt +g=-\left(dt-\beta_i dx^i\right)^2 +\left(g_{ij} +\beta_i \beta_j\right)dx^i dx^j.
\end{equation}
This simplification shows that Christoffel symbols involving the time coordinate vanish. Namely, if $a,b,c=0,1,2,3$ with the index $0$ representing the time coordinate, then
\begin{equation}\label{christoffel}
\bar{\Gamma}_{at}^b =\frac{1}{2}\bar{g}^{bc}\left(\partial_a \bar{g}_{ct}+\partial_t \bar{g}_{ca}-\partial_c \bar{g}_{at}\right)=\frac{1}{2}\bar{g}^{bc}\left(\partial_a \beta_c -\partial_c \beta_a\right)=0,
\end{equation}
where we have used that $\beta=d\mathbf{u}$ with
\begin{equation}
\mathbf{u}=u\left(\frac{1}{\sqrt{2}},\frac{1}{\sqrt{2}},0\right)+ u\left(-\frac{1}{\sqrt{2}},0,\frac{1}{\sqrt{2}}\right)- u\left(0,\frac{1}{\sqrt{2}},\frac{1}{\sqrt{2}}\right).
\end{equation}
The second fundamental form of the constant time slice $t=const$ is then
\begin{equation}
\langle\bar{\nabla}_i\mathbf{n},\partial_j \rangle
=\alpha^{-1}\langle\bar{\nabla}_i(\partial_t -\beta),\partial_j \rangle
= \alpha^{-1}\bar{\Gamma}^b_{it} \bar{g}_{bj} -\alpha^{-1}\nabla_i \beta_j=k_{ij}.
\end{equation}
Furthermore, the initial data metric $g$ is the induced metric on $t=const$ and hence $(M,g,k)$ arises from a
constant time slice in this Killing development.

Let us now show that the Killing development is flat. Consider the null vector fields
\begin{equation}
X_\ell =\nabla u_\ell + |\nabla u_\ell|\mathbf{n},\quad\quad\ell=1,2,3,
\end{equation}
where
\begin{equation}
u_1=u(1,0,0),\quad\quad\quad u_2=u(0,1,0),\quad\quad\quad u_3 =u(0,0,1),
\end{equation}
and these functions are extended trivially in the $t$-direction to all of $\mathbb{R}\times M$. Let $i,j=1,2,3$ and compute
\begin{equation}
\langle \bar{\nabla}_t X_{\ell},\partial_{t}\rangle
=\langle \bar{\nabla}_t \nabla u_{\ell},\partial_t\rangle
+|\nabla u_{\ell}|\langle\bar{\nabla}_t \partial_t ,\partial_t \rangle
=0,
\end{equation}
\begin{equation}
\langle\bar{\nabla}_i X_{\ell},\partial_j\rangle=\langle \bar{\nabla}_i \nabla u_{\ell},\partial_j\rangle+|\nabla u_{\ell}|\langle \bar{\nabla}_i\mathbf{n},\partial_j\rangle=\nabla_{ij} u_{\ell}+|\nabla u_{\ell}| k_{ij}=0,
\end{equation}
\begin{equation}
\langle\bar{\nabla}_t X_{\ell},\partial_i\rangle=\langle\bar{\nabla}_{t}\nabla u_{\ell},\partial_i\rangle+|\nabla u_{\ell}|\langle \bar{\nabla}_t\mathbf{n},\partial_i\rangle=-|\nabla u_{\ell}|\langle\mathbf{n},\bar{\nabla}_t \partial_i\rangle
=0,
\end{equation}
and
\begin{align}
\begin{split}
\langle\bar{\nabla}_i X_{\ell},\partial_t\rangle=&\partial_i \langle X_{\ell},\partial_t \rangle\\
=&\partial_i\left(u^j_{\ell} \bar{g}_{jt}-\frac{|\nabla u_{\ell}|}{\alpha}\left(1+\beta^j \bar{g}_{jt}\right)\right)\\
=&\partial_i \left( u^j_{\ell} \beta_j -\alpha |\nabla u_{\ell}|\right)\\
=&\beta^j \nabla_{ij} u_{\ell}+u^j_\ell \nabla_{i}\beta_{j}
-|\nabla u_{\ell}|\partial_i \alpha-\alpha \partial_i |\nabla u_{\ell}|\\
=&0.
\end{split}
\end{align}
It follows that these vector fields are covariantly constant, and in light of \eqref{christoffel} the same holds for the Killing field $\partial_t$.
Furthermore, the collection of vector fields $\{X_1, X_2, X_3, \partial_t\}$ is linearly independent in the asymptotic end, and by parallel translation this property holds globally. We then have that the spacetime $(\mathbb{R}\times M,\bar{g})$ is flat.

In order to complete the proof it must be shown that the Killing development is isometric to Minkowski space. To this end, observe that since $\beta$ is exact, the change of coordinates $\bar{t}=t-\mathbf{u}(x)$, $\bar{x}=x$ yields the static structure
\begin{equation}
\bar{g}=-d\bar{t}^2 + (g+d\mathbf{u}^2).
\end{equation}
The manifold $(\mathbb{R}^3, g+d\mathbf{u}^2)$ is asymptotically flat, and hence complete. Moreover it is flat, and thus isometric to Euclidean 3-space, yielding the desired conclusion. Next observe that since the initial data $(M,g,k)$ arise from the $t=0$ slice, we find that they may be expressed as the graph of a linear combination of spacetime harmonic functions, namely $\bar{t}=-\mathbf{u}(\bar{x})$.
\end{proof}

\bigskip
\noindent\textbf{Acknowledgements.} The authors would like to thank Hubert Bray, Greg Galloway, and Daniel Stern for helpful comments.

\end{document}